\@date \else {\vskip3ex \centering\footnotesize\@date\par\vskip1ex}\fi
\else \@footnotetext{\@setdate}\fi}
  \def\blfootnote{\gdef\@thefnmark{}\@footnotetext}
\newcommand{\diam}{\mathrm{diam}\,}
\newcommand{\dist}{\mathrm{dist}}
\newcommand{\G}{\overline{g}^q_{s,d}}
\newcommand{\g}{\underline{g}^q_{s,d}}
\newcommand{\qc}{q_{ \scalebox{0.6}{$ C $}}}
\newcommand{\cl}{\mathcal{R}_s^d }
\newcommand{\e}{E^q_{s,d}}
\newcommand{\n}{\mathcal{N}}
\newcommand{\N}{\mathbb{N}}
\newcommand{\R}{\mathbb{R}}
\newcommand{\E}{\mathcal{E}_{s,d}^q}
\newcommand{\supp}{\operatorname{supp}}
\newcommand{\T}{\tau_{s,d}}
\newcommand{\Q}{\overline{q}_\epsilon}
\newcommand{\q}{\underline{q}_\epsilon}
\newcommand{\B}{B_{j}}
\newcommand{\slim}{\lim_{\substack{N\to \infty \\ N\in\mathcal{N}  }}}
\renewcommand{\c}{\mathcal{C}}
\renewcommand{\d}{\,\mathrm{d}}
\renewcommand{\H}{\mathcal{H}_d^A }
\newcommand{\h}{\mathcal{H}_d}
\renewcommand{\L}{L_*}
\renewcommand{\qc}{q_{ \scalebox{0.6}{\ensuremath{C}}}}
\renewcommand{\c}{C_{s,d}}
\newtheorem{theorem}{Theorem}[section]
\newtheorem{lemma}[theorem]{Lemma}
\newtheorem{proposition}[theorem]{Proposition}
\newtheorem{thm}{Theorem}
\newtheorem{corollary}[theorem]{Corollary}
\theoremstyle{definition}
\newtheorem{remark}[theorem]{Remark}
\newtheorem{example}[theorem]{Example}
\numberwithin{equation}{section}
\title[Point Configurations via Hypersingular Riesz Energy]{Generating Point Configurations via Hypersingular Riesz Energy With an External Field}
\author{D. P. Hardin}
\author{E. B. Saff}
\author[O. V. Vlasiuk]{O. V. Vlasiuk$ ^\dagger $ }
\address{Center for Constructive Approximation} 
\address{Department of Mathematics, Vanderbilt University, Nashville, TN, 37240}
\email{doug.hardin@vanderbilt.edu}
\email{edward.b.saff@vanderbilt.edu}
\email{oleksandr.vlasiuk@vanderbilt.edu}
\begin{document}

\begin{abstract}
	For a compact $ d $-dimensional rectifiable subset of $ \mathbb R^{p} $  we study asymptotic properties as  $ N\to\infty $ of $N$-point configurations minimizing the energy arising from a Riesz $ s $-potential $ 1/r^s $ and an external field in the hypersingular case $ s\geq d$. Formulas for the weak$ ^* $ limit of normalized counting measures of such optimal point sets and the first-order asymptotic values of minimal energy are obtained. As an application, we derive a method for generating configurations whose normalized counting measures converge to a given absolutely continuous measure supported on a rectifiable subset of $ \mathbb R^{p} $. Results on separation and covering properties of discrete minimizers are given. Our theorems are illustrated with several numerical examples.\\ 
\end{abstract}
\subjclass[2010]{ Primary, 31C20, 28A78. Secondary, 52A40.}
\keywords{Riesz energy, equilibrium configurations, external field, covering radius, separation distance, quasi-uniformity.}
\date{\today}
\maketitle
\section{Introduction}

\blfootnote{ The research of the authors was supported, in part, by National Science Foundation grants DMS-1412428 and DMS-1516400. A part of this work was conducted during the Minimal Energy Point Sets, Lattices and Designs Workshop at the Erwin Schr\"odinger International Institute for Mathematical Physics.  \\
$ ^\dagger $The research of this author was completed as a part of a Ph.D. dissertation.}

We are concerned with the problem of minimizing the discrete Riesz $ s $-energy of $ N $ particles constrained to a compact subset $ A $ of $ \R^p  $ of Hausdorff dimension $ d $ under the influence of an external field $ q(x) $. More precisely, we minimize
\begin{equation}\label{s_energy}
\begin{aligned}
E^q_{s,d}(\omega_N):=  \sum_{ \substack{x\neq y \\ x,y\in\omega_N} }  
 |x-y|^{-s}  + \frac{\T(N)}{N }\,\sum_{x\in\omega_N} q(x), \quad s\geq d,    
\end{aligned}
\end{equation}
for
\begin{equation}
\T(N):=   \begin{cases} N^{1+s/d},\ &s>d,  \\ N^2\log N,\ &s=d, \end{cases} 
\end{equation}
over $ N $-element subsets $ \omega_N \subset A $.

The factor $ \T(N)/N $ is chosen so that the two terms on the right hand side of \eqref{s_energy} have the same order of growth as $ N\to\infty $. Here we consider only the case when $s $ is chosen greater than or equal to the dimension of the set $ A $ because for $ s<d $ such external field problems come under the umbrella of classical potential theory and have been well studied as we describe below.

One motivation to consider this energy expression is that (under mild conditions on the set $A $) for any probability measure $ \mu $ on $ A $ that is absolutely continuous with respect to the $ d $-dimensional Hausdorff measure restricted to $ A $, there is an easily described external field $ q(x) $ for which the normalized counting measures of the minimizers of \eqref{s_energy}  weak$ ^* $ converge to $ \mu $ (formal definitions are given in the next two subsections).

For $ s > d $ the minimizers of \eqref{s_energy} are shown to have optimal orders of separation and covering as $ N\to \infty $. Minimization of \eqref{s_energy} therefore provides well-distributed nodes on compact sets, which can be used for a number of applications; for example, meshless  methods \cite{nguyen2008meshless}, halftoning \cite{wu2014riesz} and sensor deployment \cite{johnson2012more}.

External fields arise in the \textit{Gauss variational problem}, which involves minimizing the functional 
\begin{equation}\label{gauss-variation}
I_{\kappa,q}(\mu)  :=\iint\limits_{A\times A}  \kappa(x,y) \d\mu(x) \d\mu(y) + 2 \int_A q(x) \d \mu(x),
\end{equation}
for a pair of fixed integrable lower semi-continuous functions $ \kappa: A\times A \to {\R\cup \{+\infty \} } ,\ q: A \to \R\cup \{+\infty \}  $ over the probability measures supported on $ A $. The classical work of Ohtsuka \cite{ohtsuka1961potentials} deals with this question when $ A $ is locally compact. The case $ \kappa(x,y) = \log \frac{1}{|x-y|}  $ and a number of its applications to constructive analysis are extensively treated in the book \cite{saff2013logarithmic} by Saff and Totik. More recently, the question of solvability of the Gauss variational problem was considered by Zorii in \cite{zorii2003equilibrium} and \cite{zorii2005necessary}.

The discrete form of \eqref{gauss-variation} is the problem of minimization over all $ N $-element sets $   \omega_N \subset A  $ of 
\begin{equation}\label{discrete-general}
 E_{\kappa,q} (\omega_N)=\sum_{ \substack{x\neq y \\ x,y\in\omega_N} } \kappa(x,y) + N \sum_{x\in \omega_N} q(x), \qquad N=1,2,\ldots. 
\end{equation}
Such problems have been studied by Petrache, Rougerie and Serfaty in \cite{petrache2014next}, \cite{rougerie2015higher} for the \textit{Riesz $s$-kernel}   
\begin{equation}
 \kappa_s(x,y) :=  |x-y|^{-s},  
\end{equation} 
with $ s<d $, and for the logarithmic kernel in \cite{saff2013logarithmic}. An earlier series of papers \cite{Dragnev2007}, \cite{Brauchart2009} and \cite{Brauchart2014}  by Brauchart et al. explores minima of \eqref{discrete-general} when $ A $ is a $ d $-dimensional sphere and $ d-2\leq s < d $. The paper \cite{biloglia2015weighted} by Bilogliadov considers minimizing \eqref{gauss-variation} with the Riesz kernel for $ s=1 $ and a rotationally symmetric $ q $ over probability measures supported on the $ 2 $-dimensional unit sphere.

\subsection{Hypersingular Riesz kernels} 

We call the Riesz kernels $ \kappa_s(x,y) = |x-y|^{-s} $ \textit{hypersingular} when $ s\geq d = \dim A $, and deal only with this case from now on. 
The reason to consider such kernels will become evident from the following result, which shows that the minimizers  of \eqref{s_energy} with $ q\equiv0 $ are well-distributed on the set $ A $, which need not to be the case when $ s<d $.

For the purposes of studying the asymptotic behavior of $ N $-point configurations $\omega_N$ on $A$ we consider their normalized counting measures. Recall that such measures  $N^{-1} \sum_{x\in \omega_N }  \delta_x  $ are said to   \textit{weak$ ^* $ converge} to the measure $ \lambda $ if
\begin{equation}\label{weak_convergence}
 \forall f \in C(A),\qquad  \frac{1}{N} \sum_{x\in\omega_N } f(x)  {\longrightarrow} \int f \d \lambda \quad \text{ as } N\to \infty, 
\end{equation}
where, as usual, $ C(A) $ denotes the family of all continuous functions defined on $ A $. Weak$ ^* $ convergence is denoted by $ \stackrel{*}{\longrightarrow} $.

We further need to impose some regularity conditions on the underlying compact set.  A set $ A\subset\R^{p}  $ is said to be  $ d $\textit{-rectifiable} if it is the image of a bounded subset of $ \R^d $ under a Lipschitz mapping. 
Note that any subset of a $ d $-rectifiable set is also $ d $-rectifiable. Here and below we write  $ \mathbb{B }^d $ for the $ d $-dimensional unit ball. We use $\h $ to denote the $ d $-dimensional Hausdorff measure on  $ \R^{p} $ normalized so that $ [0,1]^d\subset \R^d\subset \R^p $ has unit volume, and by $ \H $ its restriction to $ A $. In particular, for a $ d $-rectifiable $ A $, $ \h(A)<\infty $.
The following theorem concerns a variant  of \eqref{s_energy} without external field:
\begin{equation}\label{intro_minimal_energy}
\mathcal{E}_s(A,N):=\inf_{\omega_N\subset A} E_s(\omega_N), 
\end{equation}
where $ E_s(\omega_N) : = \sum_{x\neq y\in \omega_N} \kappa_s(x,y) $. This infimum is attained for compact sets $ A $, because the Riesz $ s $-kernel is lower semi-continuous on $ A\times A $.

\begin{thm}[\cite{HaSa2005},\cite{borodachov2008asymptotics}]\label{poppy seed}
Suppose $ s \geq d $ and $ A\subset \R^{p} $ is $ d $-rectifiable and compact. If $ s = d $, it is further assumed that $ A $ is a subset of a $ d $-dimensional $ C^1 $ manifold. Then for $ s = d $ 
\begin{equation}\label{unweighted_s=d}
\lim_{N\to\infty}  \frac{\mathcal{E}_s(A,N)}{N^2\log N}= \frac{\mathcal{H}_d (\mathbb{B }^d  )}{\mathcal{H}_d(A)},
\end{equation}
while for $ s > d $, the following limit  exists:

\begin{equation}\label{unweighted_s>d}
\lim_{N\to\infty}  \frac{\mathcal{E}_s(A,N)}{N^{1+s/d}}= \frac{\c  }{\mathcal{H}_d(A)^{s/d}},
\end{equation}
where $ \c $ is a finite positive constant independent of $ A $ and $ p $, and $ 1/0 = +\infty $.
Furthermore, if $ \mathcal{H}_d(A) > 0 $ and $\{\omega_N\}_{N\ge 2}$  is any sequence of $ N $-point configurations on $A$ satisfying  
\begin{equation}
\lim_{N\to\infty}\frac{E_s(\omega_N)}{\mathcal{E}_s(A,N)}=1,
\end{equation}
then 
\begin{equation}
\frac{1}{N}\sum_{x\in {\omega}_N} \delta_{ x}\stackrel{*}{\longrightarrow} \frac{\d \H }{\mathcal{H}_d(A) }, \quad N\to\infty .
\end{equation}


\end{thm}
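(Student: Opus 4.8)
The plan is to reduce everything to the model case $A=[0,1]^{d}$, produce the constant $\c$ there by a subadditivity (Fekete-type) argument, transfer to general $d$-rectifiable sets by a covering/bi-Lipschitz argument, and then extract the weak$^{*}$ limit from a strict-convexity inequality for the way energy splits across a partition of $A$. For the cube $Q=[0,1]^{d}$ with $s>d$ one uses the exact homogeneity $\mathcal{E}_s(\ell\,\Omega,N)=\ell^{-s}\mathcal{E}_s(\Omega,N)$ and, more generally, that a bi-Lipschitz change of variables with constant $1+\eta$ distorts $\mathcal{E}_s$ by at most the factor $(1+\eta)^{s}$. Partitioning $Q$ into $m^{d}$ translates of $\tfrac1mQ$, shrinking each to a concentric cube of side $(1-\delta)/m$, placing near-optimal configurations of $\approx N/m^{d}$ points in each, and bounding the interaction energy between distinct subcubes by $o(N^{1+s/d})$ (a dyadic-shell count, using $s>d$ and the $\ge\delta/m$ separation), one gets $\mathcal{E}_s(Q,N)\le(1-\delta)^{-s}\,m^{d+s}\,\mathcal{E}_s(Q,\lceil N/m^{d}\rceil)+o(N^{1+s/d})$; together with the near-monotonicity of $N\mapsto\mathcal{E}_s(Q,N)$ this forces $\limsup_{N}\mathcal{E}_s(Q,N)/N^{1+s/d}\le\liminf_{N}\mathcal{E}_s(Q,N)/N^{1+s/d}$, so $\c:=\lim_{N}\mathcal{E}_s(Q,N)/N^{1+s/d}$ exists. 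Finiteness follows by testing with a $\lfloor N^{1/d}\rfloor$-per-side grid, whose energy is $\asymp N^{1+s/d}$ since $\sum_{0\neq k\in\mathbb{Z}^{d}}|k|^{-s}<\infty$; positivity follows from $E_s(\omega_N)\ge\sum_i\rho_i^{-s}$ with $\rho_i:=\dist(x_i,\omega_N\setminus\{x_i\})$, the disjointness of the balls $B(x_i,\rho_i/2)$ (hence $\sum_i\rho_i^{d}\lesssim1$), and convexity of $t\mapsto t^{-s/d}$, giving $\sum_i\rho_i^{-s}\ge N(N^{-1}\sum_i\rho_i^{d})^{-s/d}\gtrsim N^{1+s/d}$.

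Next one upgrades this to $\mathcal{E}_s(U,N)=(1+o(1))\,\c\,N^{1+s/d}/\h(U)^{s/d}$ for any bounded measurable $U\subset\R^{d}$ with $\h(U)>0$: for the upper bound, by the Lebesgue density theorem cover all but $\epsilon$-measure of $U$ by finitely many disjoint cubes $Q_j$, put $\approx(\h(Q_j)/\h(U))N$ near-optimal points in each, and discard the normalized-negligible cross energy; for the lower bound, choose the $Q_j$ with $\h(U\cap Q_j)\ge(1-\epsilon)\h(Q_j)$, restrict an arbitrary configuration, use $\mathcal{E}_s(U\cap Q_j,N_j)\ge\mathcal{E}_s(Q_j',N_j)$ for slightly smaller cubes $Q_j'$, dispose of the cubes receiving few points by the crude two-sided bound above, and reassemble via the convexity identity $\min\{\sum_j N_j^{1+s/d}\mu_j^{-s/d}:\sum_j N_j=N\}=N^{1+s/d}(\sum_j\mu_j)^{-s/d}$. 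Since a compact $d$-rectifiable set is, up to an $\h$-null set, a countable disjoint union of bi-Lipschitz images of bounded subsets of $\R^{d}$ with bi-Lipschitz constants as close to $1$ as desired, the same two bounds plus the convexity identity across the pieces give \eqref{unweighted_s>d}. The case $s=d$ runs along the same lines, but the homogeneity is only asymptotic: under the $C^{1}$-manifold hypothesis a near-optimal configuration is quasi-uniform with spacing $\asymp N^{-1/d}$, and the dominant part of $\sum_{x\neq y}|x-y|^{-d}$ comes from the annulus $N^{-1/d}\lesssim|x-y|\lesssim1$, contributing $\sim N\cdot\mathcal{H}_{d-1}(\mathbb{S}^{d-1})\int_{\sim N^{-1/d}}r^{-d}r^{d-1}\d r\sim d^{-1}\,\mathcal{H}_{d-1}(\mathbb{S}^{d-1})\,N^{2}\log N$; using $\mathcal{H}_{d-1}(\mathbb{S}^{d-1})=d\,\h(\mathbb{B}^{d})$ and the point density $\h(A)^{-1}$ yields \eqref{unweighted_s=d}.

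For the weak$^{*}$ limit, let $\{\omega_N\}$ satisfy $E_s(\omega_N)/\mathcal{E}_s(A,N)\to1$; by weak$^{*}$ compactness it suffices to show every subsequential weak$^{*}$ limit $\nu$ of $N^{-1}\sum_{x\in\omega_N}\delta_x$ equals $\H/\h(A)$. Fix a Borel $B\subset A$ with $\h(\partial_A B)=0$ (e.g.\ $B=A\cap\{x_1<t\}$ for a.e.\ $t$), so that $N^{-1}|\omega_N\cap B|\to\nu(B)$ along the subsequence. Restricting the energy to $B$ and to $A\setminus B$ and applying the previous step to $\overline B$ and $\overline{A\setminus B}$,
\[
E_s(\omega_N)\ \ge\ (1-o(1))\,\c\Big(\frac{|\omega_N\cap B|^{1+s/d}}{\h(B)^{s/d}}+\frac{(N-|\omega_N\cap B|)^{1+s/d}}{\h(A\setminus B)^{s/d}}\Big),
\]
and by the convexity identity the bracket is $\ge N^{1+s/d}/\h(A)^{s/d}$, with equality in the limit only if $|\omega_N\cap B|/N\to\h(B)/\h(A)$. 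Hence if $\nu(B)\neq\h(B)/\h(A)$ the right side exceeds $(1+c)\,\c\,N^{1+s/d}/\h(A)^{s/d}$ for some $c>0$, contradicting $E_s(\omega_N)=(1+o(1))\,\mathcal{E}_s(A,N)=(1+o(1))\,\c\,N^{1+s/d}/\h(A)^{s/d}$. Thus $\nu(B)=\h(B)/\h(A)$ on a boundary-null $\pi$-system generating the Borel sets of $A$, so $\nu=\H/\h(A)$; the $s=d$ case is identical with $N^{1+s/d}$ replaced by $N^{2}\log N$.

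The hard part, for $s>d$, is the \emph{lower} bound with the \emph{sharp} constant: controlling both the cubes that receive only $O(1)$ points and the near-boundary cross energies, which is exactly where the crude two-sided bound and the decomposition into pieces with bi-Lipschitz constant $\to1$ do the work. For $s=d$ the genuinely separate obstacle is pinning down the universal constant as $\h(\mathbb{B}^{d})$ through the quasi-uniformity of near-minimizers, and this is what forces the $C^{1}$ regularity hypothesis.
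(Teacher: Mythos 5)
This statement is Theorem~\ref{poppy seed}, which the paper does not prove: it is quoted as a black box from \cite{HaSa2005} and \cite{borodachov2008asymptotics}, so there is no internal proof to compare against. Measured against the published proofs, your $s>d$ outline is essentially the standard route and is sound at the level of a sketch: Fekete-type subadditivity on the cube via exact $(-s)$-homogeneity, the grid upper bound and the nearest-neighbour/convexity lower bound for finiteness and positivity of $\c$, transfer to measurable $U\subset\R^d$ by Lebesgue density points and the H\"older identity $\min\{\sum_j N_j^{1+s/d}\mu_j^{-s/d}\}=N^{1+s/d}(\sum_j\mu_j)^{-s/d}$, decomposition of a rectifiable set into bi-Lipschitz pieces with constants tending to $1$, and the weak$^*$ limit by strict convexity of $t\mapsto t^{1+s/d}$ applied to a boundary-null generating family of sets. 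You also correctly identify where the real work is (cubes receiving $O(1)$ points, cross energies, and points landing in the $\h$-null remainder), even if you only gesture at how it is done.

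The genuine gap is the case $s=d$. Your identification of the constant as $\h(\mathbb{B}^d)$ rests on two assumptions that are not available: (i) that a near-optimal configuration is quasi-uniform with spacing $\asymp N^{-1/d}$ — the theorem is stated for arbitrary \emph{asymptotically} minimizing sequences, which need not be quasi-uniform, and even for exact minimizers quasi-uniformity is a separate (later) theorem, not an input; and (ii) that the points have asymptotic density $N/\h(A)$, which is precisely the equidistribution conclusion you are trying to prove, so the annulus computation $\sum_{y\neq x}|x-y|^{-d}\sim \frac{N}{\h(A)}\mathcal{H}_{d-1}(\mathbb{S}^{d-1})\tfrac1d\log N$ is circular. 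Moreover, the subadditivity machinery that drives your $s>d$ argument genuinely breaks at $s=d$: the normalization $N^2\log N$ is not compatible with the $\ell^{-d}$ scaling of the kernel (rescaling a cube by $1/m$ multiplies the energy by $m^d$ but leaves $\log N$ untouched), so the Fekete argument does not close and you cannot simply say the case ``runs along the same lines.'' The published proof of \eqref{unweighted_s=d} uses a different mechanism: a lower bound obtained by comparing, for each point, the discrete sum to the continuous integral $\int_{A\setminus B(x,r)}|x-y|^{-d}\,\d\h(y)$ with a cutoff at scale $\asymp N^{-1/d}$ (an averaging argument in the spirit of the proof of Lemma~\ref{separation_lemma} above), together with an explicit construction for the upper bound; the $C^1$ hypothesis enters to make $A$ locally bi-Lipschitz-flat with constants tending to $1$ so that these integrals can be computed in the tangent plane. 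As written, your sketch for $s=d$ is a heuristic for why the answer should be $\h(\mathbb{B}^d)/\h(A)$, not a proof.
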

This theorem is sometimes described as the \textit{Poppy-seed bagel theorem}, a name that alludes to discrete equilibrium configurations on the torus. It first appeared in \cite[Theorem 2.1]{HaSa2005}, and in the present generality in \cite[Theorems 1--3]{borodachov2008asymptotics}.

In particular, the theorem holds for any compact  $ A \subset \R^d $ as well as any compact subset of a smooth $ d $-dimensional manifold.
To be consistent with the notation of \eqref{unweighted_s>d},  we define $ C_{d,d} $ according to \eqref{unweighted_s=d}: 
\begin{equation}\label{c_dd}
 C_{d,d}:= {\mathcal{H}_d(\mathbb{B}^d)}= \frac{\pi^{d/2} }{\Gamma\left(d/2+1 \right) } , \quad d\geq1, 
\end{equation}
where $ \Gamma $ is the standard gamma function.
It is known for $ d=1,\ s> 1 $ that 
\begin{equation}\label{cs1}
 \begin{aligned}
 C_{s,1}= 2\zeta(s),&\quad s>1,
 \end{aligned} 
\end{equation}
where $ \zeta $ is the Riemann zeta function, see e.g. \cite{FiMaRaSa2004}. However, for dimensions $ d \geq 2 $ the exact value of $ \c $ is unknown. In the cases $ d= 2,\ 4,\ 8,\ 24 $, the conjectured value is
\begin{equation}\label{conj_val}
 \c=|\Lambda_d|^{s/d}\zeta_{\Lambda_d}(s),\ \quad s>d, 
\end{equation}
where $ \Lambda_d $ denotes, respectively, the hexagonal, $ D_4,\ E_8 $ and Leech lattices; $ |\Lambda_d| $ is the volume of its fundamental cell; and $ \zeta_{\Lambda_d} $ is the corresponding Epstein  zeta-function; see \cite[Conjecture 2]{BrHaSa12}. As shown in \cite[Proposition 1]{BrHaSa12}, the  conjectured values \eqref{conj_val} serve as upper bounds for their respective $ \c $.

One way of generalizing Theorem \ref{poppy seed} so that it yields non-uniform limiting distributions was studied in \cite{borodachov2008asymptotics}, where the Riesz potential is multiplied by a weight satisfying semicontinuity conditions. More precisely, one  minimizes the energy
\[  E^w_s(\omega_N):= \sum_{x\neq y\in\omega_N} \frac{w(x,y)}{|x-y|^s},  \]
for a non-negative weight function $ w $ on $ A\times A $. Our present goal is to develop an alternate approach by introducing an external field equipped with a suitable scaling factor that depends on the number of points $ N $. 

With regards to practical implementation, it is worth mentioning that by using a localized weight $ w(\cdot, \cdot):=w_N(\cdot, \cdot) $ that depends on the number of points, one can lower the computational complexity of $ E^w_s(\omega_N) $. This approach is investigated in \cite{borodachov2014low}. On the other hand, a number of papers are dedicated to producing well-distributed discrete configurations by drawing them from a suitable random process with, perhaps, further local optimization, see for example \cite{alishahi2015spherical}, \cite{beltran2015energy}, \cite{mak2016support}. 
The possibility of introducing a multiplicative weight together with an external field, as well as decreasing the complexity of the method described below will be the subject of a future work.


The outline of the paper is as follows. In the remaining part of this section we introduce some essential notation. Section \ref{s_main} contains an extension of the Poppy-seed theorem to the case when an external field is present; it also includes results on separation and covering of minimizing configurations. We discuss numerical examples in Section \ref{s_examples}. Finally,  Section \ref{s_proofs} contains proofs of the results stated in Section \ref{s_main}.


\subsection{Notation} We consider configurations of points restricted to a compact set $A \subset \R^p $,  such that $\mathcal{H}_d(A)>0$,  $d\leq p $. The \textit{external field} $q:A\to (-\infty,\infty] $ is assumed to be lower semi-continuous and finite on a subset of $ A $ of positive $ \H $-measure. We write $ \mathring{M} $ for the interior of a set $ M \subset \R^p $, and $ \overline{M} $ for its closure. For a real number $ r $, let $ (r)_+:= \max (0,\,r) $. The closed ball in $ \R^p $ of radius $ r $ centered at the point $ x $  is denoted by  $ B(x,r) $.  Notation $ L^1(A,\lambda) $ stands for the class of functions integrable on the set $ A $ with respect to measure $ \lambda $.

The \textit{minimal $ (s,d,q) $-energy} of the set $ A $ over all $ N $-point subsets of $ A $ is given by
\begin{equation}\label{min_energy_definition}
\E (A,N):=\inf \{ E^q_{s,d}(\omega_N): \omega_N\subset A,\ \# \omega_N=N \},
\end{equation}
where $ \#S $ denotes the cardinality of a set $ S $. Since $ q $ is lower semi-continuous and $ A $ is compact, there exists a configuration of $ N $ charges $ \hat{\omega}_N $ for which the infimum in \eqref{min_energy_definition} is attained; i.e., 
\[ \e(\hat{\omega}_N)=\E(A,N). \]
Such a configuration $ \hat{\omega}_N $ will be called an \textit{$ N $-point} $ (s,d,q) $\textit{-energy minimizer on $ A $}. 

\section{Main results}\label{s_main}

\subsection{A Poppy-seed theorem for $ \boldsymbol{(s,d,q)} $-energy} The following two results extend Theorem \ref{poppy seed} to $ (s,d,q) $-energy. 

\begin{theorem}\label{thm_weak_lim}
Assume $0<d\le p$, and $s\geq d$. Let $A\subset \R^{p} $ be a $ d $-rectifiable compact set, $ \mathcal{H}_d(A)>0 $, and in the case $ s=d $ require additionally that $ A $  be a subset of a $ d $-dimensional $ C^1 $-manifold. Further assume that $q$ is a lower semi-continuous function on $ A $ and finite on a set of positive $ \H $-measure. 
Define $L_1$ and $ \mu_A^q $ to be the positive constant and  probability measure  determined, respectively, by
\begin{equation}\label{L}
\int \left(\frac{L_1-q(x)}{\c(1+s/d)}\right)^{d/s}_+\d\H(x)=1, \qquad
\d\mu_A^q : = \left(\frac{L_1-q(\cdot)}{\c(1+s/d)}\right)^{d/s}_+\d\H,
\end{equation}
where $ \c $ for $ s\geq d  $ is the same as in Theorem \ref{poppy seed}.
Then  
\begin{equation}\label{S_lim}
 \lim\limits_{N\to\infty}  \frac{ \E(A,N) }{\tau_{s,d}(N)}= S(q,A):=  \int\limits \frac{L_1+sq(x)/d}{1+s/d}  \, \d\mu_A^q(x).
\end{equation} 
Furthermore, if $ \{ {\omega}_N\}_{N\geq2} $ is any sequence of asymptotically $ (s,d,q) $-energy minimizing configurations on $A$; that is, 
\begin{equation}\label{asymp_en_min}
	\lim_{N\to\infty} \frac{ \e(\omega_N) }{\T(N)} = S(q,A), 
	\end{equation}
then
\begin{equation}\label{weak_lim}
\frac{1}{N}\sum_{x\in {\omega}_N} \delta_{ x}\stackrel{*}{\longrightarrow}  \d\mu_A^q \quad \text{ as } N\to\infty.
\end{equation}
\end{theorem}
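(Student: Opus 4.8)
The plan is to establish the energy asymptotic \eqref{S_lim} first, as the matching of a lower and an upper bound, and then to deduce the weak$^*$ convergence \eqref{weak_lim} from it together with uniqueness of the minimizer of the underlying continuous variational problem.

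\emph{Step 1: the variational functional and the energy lower bound.} I would begin by observing that $ \mu_A^q $ is, by the Euler--Lagrange (Karush--Kuhn--Tucker) conditions, the unique minimizer over Borel probability measures $ \nu $ on $ A $ of
\[
 \mathcal F(\nu) := \c \int \Big( \frac{\d\nu}{\d\H} \Big)^{s/d} \d\nu + \int q \, \d\nu ,
\]
with $ \mathcal F(\nu) := +\infty $ unless $ \nu \ll \H $ with density in $ L^{1+s/d}(\H) $; uniqueness follows from strict convexity of $ t \mapsto t^{1+s/d} $ on $ [0,\infty) $ (note $ 1+s/d \ge 2 $). Substituting the Euler--Lagrange identity $ \c\,(\d\mu_A^q/\d\H)^{s/d} = (L_1-q)/(1+s/d) $, valid on $ \supp\mu_A^q $, into $ \mathcal F(\mu_A^q) $ gives $ \mathcal F(\mu_A^q) = S(q,A) $. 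For the lower bound, take any $ \omega_N\subset A $ and a partition $ A = A_1\cup\dots\cup A_m $ into $ d $-rectifiable pieces of small diameter (any subset of a $ d $-rectifiable set being $ d $-rectifiable). Discarding the nonnegative cross-interactions,
\[
 \e(\omega_N) \ge \sum_{i=1}^m E_s(\omega_N\cap A_i) + \frac{\T(N)}{N}\sum_{i=1}^m n_i \min_{\overline{A_i}} q, \qquad n_i := \#(\omega_N\cap A_i),
\]
and Theorem \ref{poppy seed} applied on each $ \overline{A_i} $ bounds $ E_s(\omega_N\cap A_i) $ below by $ (1-o(1))\,\c\, n_i^{1+s/d}/\h(A_i)^{s/d} $ for those $ i $ with $ n_i\to\infty $ proportionally to $ N $ (the remaining pieces contribute negligibly, since $ (n_i/N)^{1+s/d}\to 0 $, and for $ s=d $ the factor $ \log n_i/\log N $ stays bounded). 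Dividing by $ \T(N) $, passing to a subsequence on which $ n_i/N\to\alpha_i $, minimizing the finite convex expression $ \sum_i\bigl[\c\,\alpha_i^{1+s/d}/\h(A_i)^{s/d} + \alpha_i\min_{\overline{A_i}} q\bigr] $ over $ \{\alpha_i\} $, and then refining the partition — using lower semicontinuity of $ q $ so that $ \min_{\overline{A_i}}q\to q $ on $ A_i $, together with Jensen's inequality and Fatou's lemma — one obtains $ \liminf_N \E(A,N)/\T(N) \ge \min_\nu\mathcal F(\nu) = S(q,A) $.

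\emph{Step 2: the energy upper bound.} Here I would construct explicit near-minimizers. After a measure-theoretic reduction — approximating the lower semicontinuous $ q $ from below by continuous fields, or invoking Lusin's theorem to pass to a compact $ K\subset\{q<L_1\}\cap A $ on which $ q $ is continuous (hence bounded, since $ K $ is compact) with $ \mu_A^q(A\setminus K)<\epsilon $ — I partition $ K $ by a generic translate of a fine cubic grid into $ d $-rectifiable pieces $ K_i $ on which $ q $ oscillates by at most $ \epsilon $, and shrink each piece away from the grid faces so that the $ K_i' $ are pairwise separated by a fixed $ \delta_0>0 $ while $ \H(K_i\setminus K_i') $ and $ \mu_A^q(K_i\setminus K_i') $ remain small (true for a.e.\ translate). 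In each $ \overline{K_i'} $ place $ n_i $ points forming an asymptotically $ E_s $-minimizing configuration (Theorem \ref{poppy seed}), with $ n_i\approx N\mu_A^q(K_i')/\mu_A^q(\bigcup_j K_j') $ and $ \sum_i n_i = N $. Then
\[
 \e(\omega_N) \le \sum_i E_s(\omega^{(i)}) + N^2\delta_0^{-s} + \frac{\T(N)}{N}\sum_i n_i \sup_{K_i'} q ,
\]
where the cross term $ N^2\delta_0^{-s} = o(\T(N)) $ in both regimes $ s>d $ and $ s=d $; the interaction sum is at most $ (1+o(1))\,\c\sum_i n_i^{1+s/d}/\h(K_i')^{s/d} $ (for $ s=d $, bound $ \log n_i\le\log N $); and $ \sup_{K_i'}q\le\min_{\overline{K_i'}}q+\epsilon $. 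Dividing by $ \T(N) $, applying Jensen's inequality piecewise to $ t\mapsto t^{1+s/d} $, and letting the grid refine yields $ \limsup_N \E(A,N)/\T(N)\le\mathcal F(\mu_A^q)+O(\epsilon) = S(q,A)+O(\epsilon) $; letting $ \epsilon\downarrow 0 $ and combining with Step 1 gives \eqref{S_lim}.

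\emph{Step 3: weak$^*$ convergence, and the main obstacle.} Let $ \{\omega_N\}_{N\ge 2} $ satisfy \eqref{asymp_en_min}. Since probability measures on the compact set $ A $ are weak$^* $ sequentially compact, it suffices to show that every weak$^* $ subsequential limit $ \nu $ of $ \frac1N\sum_{x\in\omega_N}\delta_x $ equals $ \mu_A^q $. Fix such a subsequence and a partition with $ \nu(\partial A_i)=0 $, so that $ n_i/N\to\nu(A_i) $ along it; running the computation of Step 1 verbatim along this subsequence gives $ \liminf \e(\omega_N)/\T(N) \ge \sum_i\bigl[\c\,\nu(A_i)^{1+s/d}/\h(A_i)^{s/d} + \nu(A_i)\min_{\overline{A_i}} q\bigr] $, and refining the partition shows the right-hand side increases to $ \mathcal F(\nu) $ (with value $ +\infty $ if $ \nu $ has a singular part or a density outside $ L^{1+s/d} $ — again via Jensen and Fatou). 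Since the left side equals $ S(q,A)=\min\mathcal F $ and $ \mu_A^q $ is the unique minimizer, $ \nu=\mu_A^q $; as every subsequential limit coincides with $ \mu_A^q $, the whole sequence converges, which is \eqref{weak_lim}. I expect the principal obstacle to be the bookkeeping in the upper bound of Step 2: reducing a merely lower semicontinuous, possibly $ +\infty $-valued $ q $ to a continuous one, controlling the pieces $ K_i\setminus K_i' $ near the grid faces and the rounding of the $ n_i $ so that $ \sum_i n_i = N $ exactly, and — especially in the boundary case $ s=d $ — tracking the ratios $ \log n_i/\log N\to 1 $ and verifying that pieces with $ n_i = o(N) $ contribute only $ o(\T(N)) $, so that all error terms are genuinely lower order.
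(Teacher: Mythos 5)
Your proposal is correct in outline, but it reaches the conclusion by a genuinely different route from the paper, and one step needs more care than you give it. The paper never introduces the continuous functional $\mathcal F(\nu)=\c\int(\d\nu/\d\H)^{s/d}\d\nu+\int q\,\d\nu$; instead it proves \eqref{S_lim} by sandwiching $q$ between lower semi-continuous simple functions built from a Vitali cover of Lebesgue points (Lemma \ref{almost-constant}), proving the theorem first for such piecewise-constant fields (Lemma \ref{scalable}, whose finite-dimensional minimization over the $\alpha_m$ and the resulting KKT identity \eqref{step} are exactly the discrete shadow of your Euler--Lagrange condition), and then it obtains \eqref{weak_lim} not from uniqueness of a convex minimizer but from a local two-ball comparison: Lemmas \ref{lower-optimum} and \ref{local-optimal} pin down the ratio $\hat\alpha_m/\H(B_m)$ on small balls around a.e.\ pair of points, yielding the pointwise relation \eqref{uniform_rho} and hence the density \eqref{solution}. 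Your route — Gamma-convergence of the discrete energies to $\mathcal F$ plus strict convexity — is more conceptual and delivers the weak$^*$ limit ``for free,'' whereas the paper's local comparison is more hands-on and produces quantitative local energy bounds that are reused later for the covering estimates. The shared ingredients (partitioning, discarding or separating cross terms so they are $O(N^2)=o(\T(N))$, Theorem \ref{poppy seed} on each piece, absolute continuity of subsequential limits, truncation/Lusin to tame an unbounded lower semi-continuous $q$) are the same in both arguments.

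The one place where your write-up has a real gap is the final passage of Step 1: for any fixed partition, Jensen's inequality shows that the discretized minimum $\min_{\{\alpha_i\}}\sum_i\bigl[\c\,\alpha_i^{1+s/d}/\h(A_i)^{s/d}+\alpha_i\min_{\overline{A_i}}q\bigr]$ is \emph{at most} $\min_\nu\mathcal F(\nu)$, so ``refining the partition'' does not by itself yield $\liminf_N\E(A,N)/\T(N)\ge\min_\nu\mathcal F(\nu)$; you would need the supremum over partitions of these discretized minima to equal $\min\mathcal F$, which is not automatic. The fix is to run Step 1 the way you run Step 3: extract a subsequence of $N$ realizing the $\liminf$, pass to a weak$^*$ limit $\nu$ of the corresponding minimizers, and show $\liminf\ge\mathcal F(\nu)\ge\min\mathcal F$ by applying Fatou to the piecewise-averaged densities $f_P\to\d\nu/\d\H$ and to $\min_{\overline{A_i}}q\to q$ (using lower semi-continuity). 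Relatedly, the assertion that $\mathcal F(\nu)=+\infty$ when $\nu$ has a singular part does not follow from Jensen and Fatou alone; it requires a differentiation-of-measures or covering argument, which is precisely the content of the paper's Lemma \ref{abs-continuity}. With these two points supplied, your argument closes.
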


\begin{remark}
As with Theorem \ref{poppy seed}, this result holds on the (possibly) larger class of sets $ A $ satisfying $ \mathcal{H}_d(A)=\mathcal{M}_d(A) $, where $ \mathcal{M}_d $ is the $ d $-dimensional Minkowski content.
\end{remark}

As an application of Theorem \ref{thm_weak_lim}, we deduce a method for  constructing a sequence of $ (s,d,q) $-energy minimizing collections $ \hat{\omega}_N $ such that their normalized counting measures weak$ ^* $ converge to a given distribution.
\begin{theorem}\label{thm_recovery_bounded}
Let the assumptions of Theorem \ref{thm_weak_lim} on the set $ A $ and numbers $ s,d,p $ hold. Assume further $\rho:A\to[0,\infty)$ is an upper semi-continuous function, such that $ \rho\d\H  $ is a probability measure. Then  the lower semi-continuous function $q:A\to\left(-\infty,0\right]$ given by
\begin{equation}\label{q_definition}
q(x):= -M_{s,d}\rho(x)^{s/d},\quad \text{where } M_{s,d}:=\c(1+s/d),
\end{equation}
is such that any sequence  $\{\hat{\omega}_N\}_{N\geq2}$ of $(s,d,q)$-energy minimizers converges weak$ ^* $  to  $\rho \d\H$:
\begin{equation}\label{density_bounded}
\frac{1}{N}\sum_{x\in\hat{\omega}_N} \delta_x\stackrel{*}{\longrightarrow}  \rho \d\H,\quad N\to\infty. 
\end{equation}
In particular, for $ s=d $ equation \eqref{density_bounded} holds with \eqref{q_definition} taking the form 
\begin{equation}\label{q_for_s=d}
 q(x):= - \frac{2\pi^{d/2}\rho(x) }{\Gamma\left(d/2+1 \right) }. 
\end{equation}
\end{theorem}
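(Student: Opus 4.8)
\emph{Proof proposal.} The plan is to deduce the statement directly from Theorem~\ref{thm_weak_lim} by choosing the external field so that the prescribed density $\rho$ turns out to be the equilibrium density $\mu_A^q$. First I would verify that $q$ from \eqref{q_definition} satisfies the hypotheses of Theorem~\ref{thm_weak_lim}: since $\rho\ge 0$ is upper semi-continuous and $t\mapsto t^{s/d}$ is continuous and nondecreasing on $[0,\infty)$, the composition $\rho^{s/d}$ is upper semi-continuous, so $q=-M_{s,d}\rho^{s/d}$ is lower semi-continuous; and $q$ takes values in $(-\infty,0]$, hence is finite on all of $A$. The assumptions on $A$, $s$, $d$, $p$ carry over unchanged. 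Thus Theorem~\ref{thm_weak_lim} applies, and everything reduces to computing its constant $L_1$ and measure $\mu_A^q$ for this particular $q$.

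Next I would show that $L_1=0$ and $\mu_A^q=\rho\d\H$. The point is that $M_{s,d}=\c(1+s/d)$, so that $-q(x)/\bigl(\c(1+s/d)\bigr)=\rho(x)^{s/d}$; substituting $L_1=0$ into the first identity of \eqref{L} collapses the integrand to $\rho(x)$, and the identity becomes $\int\rho\d\H=1$, which holds by hypothesis. To see that $0$ is the value singled out by \eqref{L}, I would observe that $L\mapsto\int\bigl((L-q(x))_+/(\c(1+s/d))\bigr)^{d/s}\d\H(x)$ is continuous and strictly increasing (using $\H(A)>0$ and that $\rho$ is not $\H$-a.e.\ zero), so the solution of \eqref{L} is unique; then the second identity of \eqref{L} with $L_1=0$ reads $\d\mu_A^q=\rho\d\H$.

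With these identifications the conclusion is immediate. Any sequence $\{\hat\omega_N\}$ of $(s,d,q)$-energy minimizers satisfies \eqref{asymp_en_min} by virtue of \eqref{S_lim}, so \eqref{weak_lim} specializes to \eqref{density_bounded}. The case $s=d$ is pure arithmetic: $\c=C_{d,d}=\mathcal H_d(\mathbb B^d)=\pi^{d/2}/\Gamma(d/2+1)$ and $1+s/d=2$, whence $M_{d,d}=2\pi^{d/2}/\Gamma(d/2+1)$ and $s/d=1$, turning \eqref{q_definition} into \eqref{q_for_s=d}.

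I do not expect a real obstacle here: the statement is essentially Theorem~\ref{thm_weak_lim} read backwards. The only points needing a little care are the semicontinuity of $q$ (which is exactly why $\rho$ must be \emph{upper} semi-continuous) and the uniqueness of the constant $L_1$, both dispatched by the elementary monotonicity argument above; integrability of $\rho$ and finiteness of $\H(A)$ are automatic under the standing assumptions.
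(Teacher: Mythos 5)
Your proposal is correct and follows exactly the paper's (one-sentence) proof: substitute $q=-M_{s,d}\rho^{s/d}$ into \eqref{L}, observe that $L_1=0$ solves it because $\rho\d\H$ is a probability measure, and read off $\d\mu_A^q=\rho\d\H$ from Theorem \ref{thm_weak_lim}. The extra details you supply (upper semicontinuity of $\rho$ giving lower semicontinuity of $q$, uniqueness of $L_1$ via monotonicity, and the $s=d$ arithmetic) are all sound and merely make explicit what the paper leaves implicit.
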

\begin{remark}
	The reader will no doubt observe that except for the case $d=1$ which is covered in \eqref{cs1}, the usefulness of the last theorem is limited by the lack of knowledge of the value of $ \c $ when $d\geq 2 $. Fortunately, the limit  distribution in equation \eqref{density_bounded} is stable under perturbations of the constant $ M_{s,d} $: small error in the value of $\c $ used in \eqref{q_definition} only leads to small errors in the resulting weak$ ^* $ limit of minimizers. We quantify this statement in Proposition \ref{prop_stability} below.
	
	Another possible way of overcoming this difficulty is modifying the problem of minimizing \eqref{s_energy} so that the charges are restricted to an unbounded set $ A $. It will be addressed in a future work.
\end{remark}

\begin{proposition}\label{prop_stability}
	Assume that in Theorem \ref{thm_recovery_bounded} one uses an approximate value of $ \c $ satisfying
	\[ \c'=  (1+\Delta) \c  \]
	with a fixed $ \Delta $. Let also $ \rho(x)\geq \delta >0 $ for all $ x\in A $ and write $ q'(x) $ for the external field defined with $ \c' $ instead of $ \c $ in \eqref{q_definition}. Then for $ \Delta < {M_{s,d}}/  \left({1+(\|\rho\|_\infty \delta^{-1} )^{s/d} }\right) $, the weak$ ^* $ limit of the $ (s,d,q') $-energy minimizers has density $ \rho' = \d\mu_A^{q'} / \d\H  $ satisfying
	\begin{equation}\label{eq-stability}
	|\rho'(x) - \rho(x)|\leq  \Delta  \frac{ d(1 + \|\rho\|_\infty^{s/d} /\rho(x)^{s/d} ) }{sM_{s,d}}  + o(\Delta), \qquad \Delta \to 0.
	\end{equation}
\end{proposition}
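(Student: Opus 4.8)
The plan is to apply Theorem~\ref{thm_weak_lim} to the external field $q'$ and then expand the resulting density to first order in $\Delta$. Since $C' = (1+\Delta)\c$, one has $q'(x) = -(1+\Delta)M_{s,d}\,\rho(x)^{s/d}$, which is lower semi-continuous on $A$; thus Theorem~\ref{thm_weak_lim} applies and the $(s,d,q')$-energy minimizers converge weak$^*$ to $\mu_A^{q'}$, with $\H$-density
\[
\rho'(x) = \Big(\frac{L_1' - q'(x)}{\c(1+s/d)}\Big)_+^{d/s} = \Big(\frac{L_1'}{M_{s,d}} + (1+\Delta)\,\rho(x)^{s/d}\Big)_+^{d/s},
\]
where $L_1'$ is the unique constant making the right-hand side integrate to $1$ against $\H$. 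Combining Theorems~\ref{thm_weak_lim} and~\ref{thm_recovery_bounded}, and using $\rho \ge \delta > 0$, the unperturbed field $q = -M_{s,d}\rho^{s/d}$ corresponds to $L_1 = 0$ and to the density $\rho$ itself; this is the reference point for the linearization.

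First I would use $\rho \ge \delta > 0$ together with the bound $\Delta < M_{s,d}/(1 + (\|\rho\|_\infty\,\delta^{-1})^{s/d})$ to check that $L_1'/M_{s,d} + (1+\Delta)\rho(x)^{s/d} > 0$ for every $x \in A$, so that the positive part is inactive and $\rho'$ is given on all of $A$ by a genuine power — a smooth function of $(L_1',\Delta)$. Next I would control $L_1'$: the map
\[
\ell \longmapsto \int_A \big(\ell + (1+\Delta)\,\rho(x)^{s/d}\big)_+^{d/s}\,\d\H(x)
\]
is continuous, strictly increasing where positive, equal to $(1+\Delta)^{d/s}$ at $\ell = 0$, with $\ell$-derivative $\tfrac{d}{s}\int_A(\ell + (1+\Delta)\rho^{s/d})^{d/s-1}\,\d\H$ near $\ell = 0$; equating it to $1$ and applying the implicit function theorem (or a direct monotonicity estimate) gives $L_1' = O(\Delta)$, more precisely $L_1'/M_{s,d} = -\Delta/\kappa + o(\Delta)$ with $\kappa := \int_A \rho^{1-s/d}\,\d\H$. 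Since $1 = \int_A \rho\,\d\H \le \|\rho\|_\infty\,\h(A)$ and $s \ge d$, one obtains $\kappa \ge \|\rho\|_\infty^{1-s/d}\,\h(A) \ge \|\rho\|_\infty^{-s/d}$, hence $|L_1'| \le M_{s,d}\,\|\rho\|_\infty^{s/d}\,|\Delta| + o(\Delta)$.

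With $v_0(x) := \rho(x)^{s/d} \ge \delta^{s/d}$ and $\theta(x) := L_1'/M_{s,d} + \Delta\,v_0(x) = O(\Delta)$ uniformly in $x$, a Taylor expansion of $v \mapsto v^{d/s}$ about $v_0(x)$ gives
\[
\rho'(x) - \rho(x) = \big(v_0(x) + \theta(x)\big)^{d/s} - v_0(x)^{d/s} = \tfrac{d}{s}\,v_0(x)^{d/s-1}\,\theta(x) + O(\Delta^2),
\]
uniformly on $A$, since $v_0(x) \ge \delta^{s/d}$ keeps $v_0(x)+\theta(x)$ bounded away from $0$ and $\theta = O(\Delta)$. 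Substituting $v_0^{d/s-1} = \rho^{1-s/d}$ and the definition of $\theta$ yields
\[
\rho'(x) - \rho(x) = \frac{d}{s}\Big(\Delta\,\rho(x) + \frac{L_1'}{M_{s,d}}\,\rho(x)^{1-s/d}\Big) + o(\Delta),
\]
and inserting the bound on $L_1'$ from the previous step, together with $\rho(x) \le \|\rho\|_\infty$, leads to the pointwise estimate~\eqref{eq-stability}.

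The main obstacle is the second step: obtaining a quantitative $O(\Delta)$ bound on $L_1'$ sharp enough to feed into~\eqref{eq-stability}, and verifying that every remainder is genuinely $o(\Delta)$ \emph{uniformly} over $x \in A$. This is exactly where the two hypotheses earn their keep — $\rho \ge \delta > 0$ keeps $v_0(x) = \rho(x)^{s/d}$ bounded away from $0$, so the Taylor remainder is uniformly small and the normalization map above is $C^1$ near $\ell = 0$, while the upper bound on $\Delta$ guarantees that the positive part in the density formula of Theorem~\ref{thm_weak_lim} is never activated, so that $\rho'$ remains a smooth, non-truncated expression throughout the expansion.
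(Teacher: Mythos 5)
Your overall strategy is the same as the paper's: apply Theorem \ref{thm_weak_lim} to $q'=(1+\Delta)q$, show from the normalization identity in \eqref{L} that the new constant $L_1'$ is $O(\Delta)$ (the paper does this by a direct monotonicity comparison with $\int_A\rho\,\d\H=1$, while you use an implicit-function/linearization argument — either route is fine, and your bounds $L_1'/M_{s,d}=-\Delta/\kappa+o(\Delta)$, $\kappa\ge\|\rho\|_\infty^{-s/d}$ check out), then use $\rho\ge\delta$ together with the smallness of $\Delta$ to argue the positive part is inactive, and finally Taylor-expand. Up to and including your display
\[
\rho'(x)-\rho(x)=\frac{d}{s}\left(\Delta\,\rho(x)+\frac{L_1'}{M_{s,d}}\,\rho(x)^{1-s/d}\right)+o(\Delta),
\]
the computation is correct and dimensionally consistent.

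The gap is your last sentence. Inserting $|L_1'|\le M_{s,d}\,\|\rho\|_\infty^{s/d}\,\Delta+o(\Delta)$ into that display gives
\[
|\rho'(x)-\rho(x)|\le \Delta\,\frac{d}{s}\,\rho(x)\left(1+\frac{\|\rho\|_\infty^{s/d}}{\rho(x)^{s/d}}\right)+o(\Delta),
\]
which is \emph{not} the right-hand side of \eqref{eq-stability}: it differs from it by the factor $M_{s,d}\,\rho(x)$, which is not bounded by $1$ in general, and no appeal to $\rho\le\|\rho\|_\infty$ removes it. So the asserted implication ``leads to the pointwise estimate \eqref{eq-stability}'' does not follow with your bookkeeping. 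The paper lands on \eqref{eq-stability} because it tracks the constants differently: after rewriting the normalization identity as \eqref{l_property} it obtains $|L_1'|\le|\Delta|\,\|\rho\|_\infty^{s/d}$ (no factor $M_{s,d}$), writes $L_1'=\kappa\Delta$ with $|\kappa|\le\|\rho\|_\infty^{s/d}$, and expands $\rho\left(1+\Delta(1+\kappa/\rho^{s/d})/M_{s,d}\right)^{d/s}$ so that the first-order term is recorded as $\Delta\,d(1+\kappa/\rho^{s/d})/(sM_{s,d})$, with no surviving prefactor $\rho(x)$. The same constant-tracking issue touches your positivity claim: with your bound on $L_1'$, deactivating the $(\cdot)_+$ requires essentially $\Delta\left(\|\rho\|_\infty^{s/d}-\delta^{s/d}\right)<\delta^{s/d}$, which is not a consequence of the stated hypothesis $\Delta<M_{s,d}/\left(1+(\|\rho\|_\infty\delta^{-1})^{s/d}\right)$ when $M_{s,d}$ is large, whereas it is exactly what the paper's normalization of \eqref{l_property} delivers. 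To prove the proposition as stated you must redo the final two steps following the paper's normalization; alternatively, state explicitly that your (self-consistent) expansion yields a bound differing from \eqref{eq-stability} by the factor $M_{s,d}\rho(x)$ — as written, the proposal simply asserts a conclusion its own formulas do not produce.
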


\begin{example}
	Consider the problem of minimization of $ (4,1,q) $-energy on the interval $ [0,2] $, where 
	\[ q= (x-1)^2+\frac{1}{2}. \]
	Formula \eqref{cs1} gives the exact value of $ C_{4,1} $, which enables us to plot the density of $ \mu_{[0,2]}^q $ on $ [0,2] $. For comparison, we also plot the densities of asymptotic distributions obtained for non-exact values of $ C_{4,1} $ by taking $ \Delta = \pm 0.3, \pm 0.25, \pm 0.2, \pm 0.1, \pm 0.05, \pm 0.03 $ in Proposition \ref{prop_stability}.
	
	\begin{figure}[ht!]
		\centering
		\begin{subfigure}{.7\linewidth}
			\includegraphics[scale=0.4]{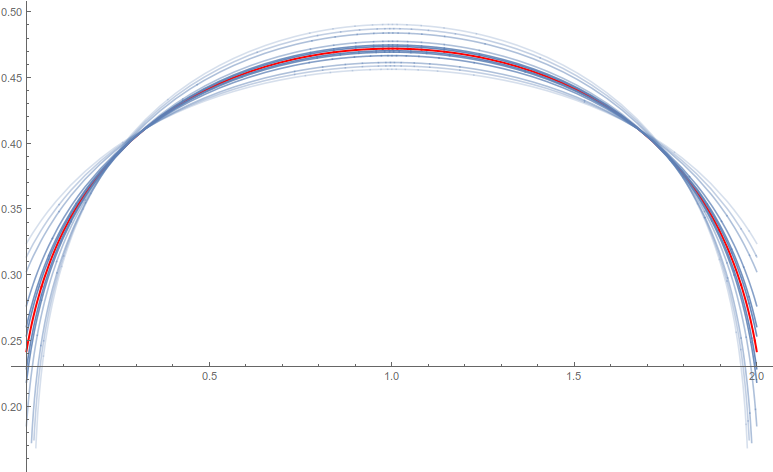}
		\end{subfigure}
		\caption{The exact graph of density $ \d\mu_{[0,2]}^q $ is pictured in red, graphs for perturbed values of $ C_{4,1} $ are in blue.}
		\label{fig:stability}
	\end{figure}
	
\end{example}


\subsection{Separation and covering properties of minimal configurations}\label{section_separation}
Let
\[ \delta(\omega_N):=\min_{\substack{x\neq y,\\ x,y\in \omega_N}}|x-y|, \]
be the \textit{separation distance} of configuration $ \omega_N $. We write $ A(u):= \{ x\in A : q(x)\leq u \} $ for a $ u \in \R $.
\begin{theorem}\label{separation}
Let $0<d\leq p$ and $s\geq d$. Let $ A\subset \R^{p}$ be compact with $ \mathcal{H}_d(A)>0 $, and  let $ q  $ be a nonnegative lower semi-continuous function on $ A $. Then there exists a constant $ C(A,s,d,q) $ such that for each $ N $-point  $ (s,d,q) $-energy minimizer $ \hat{\omega}_N\subset A $  
\[ \delta(\hat{\omega}_N )\geq C(A,s,d,q) \begin{cases}
N^{-1/d} & s >d, \\
(N\log N )^{-1/d}  & s=d,
\end{cases}   \qquad  N\geq 2. \]
\end{theorem}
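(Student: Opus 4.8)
The plan is to argue by a single‑point replacement followed by an averaging over admissible relocation sites. Let $\hat\omega_N=\{x_1,\dots,x_N\}$ be an $N$-point $(s,d,q)$-energy minimizer, labelled so that $\delta:=\delta(\hat\omega_N)=|x_1-x_2|$. Optimality of $\hat\omega_N$ among $N$-point subsets of $A$ means that for every $y\in A\setminus\{x_2,\dots,x_N\}$ one has $\e(\hat\omega_N)\le\e(\{y,x_2,\dots,x_N\})$; cancelling the energy of the common part $\{x_2,\dots,x_N\}$ leaves
\begin{equation}\label{pf-sep-cmp}
2\sum_{j=2}^{N}|x_1-x_j|^{-s}+\frac{\T(N)}{N}\,q(x_1)\ \le\ 2\sum_{j=2}^{N}|y-x_j|^{-s}+\frac{\T(N)}{N}\,q(y).
\end{equation}
Since $q\ge 0$, the left side of \eqref{pf-sep-cmp} is at least $2|x_1-x_2|^{-s}=2\delta^{-s}$, so it will suffice to make the right side of order $\T(N)/N$ — that is, $N^{s/d}$ for $s>d$ and $N\log N$ for $s=d$ — after averaging $y$ over a suitable set.

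The relocation sites will be produced by a Frostman measure. First I would fix $u_0$ with $\h(A(u_0))>0$ — possible since (by the standing assumption) $q$ is finite on a set of positive $\H$-measure — and observe that $A(u_0)$ is compact, $q$ being lower semi-continuous. By Frostman's lemma there is a probability measure $\nu$ supported in $A(u_0)$ with $\nu(B(x,r))\le C_0r^d$ for all $x\in\R^p$, $r>0$, where $C_0=C_0(A,q,d)$. Setting $\rho_N:=c_1N^{-1/d}$ for $s>d$ and $\rho_N:=c_1(N\log N)^{-1/d}$ for $s=d$, and $H:=A(u_0)\setminus\bigcup_{j=2}^{N}B(x_j,\rho_N)$, the upper‑regularity bound gives $\nu\bigl(\bigcup_{j=2}^{N}B(x_j,\rho_N)\bigr)\le(N-1)C_0\rho_N^{d}$, so a choice of $c_1$ depending only on $C_0$ and $d$ forces $\nu(H)\ge\tfrac12$ for all $N\ge 2$. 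Because $\{x_2,\dots,x_N\}$ is $\nu$-null, \eqref{pf-sep-cmp} is valid for $\nu$-a.e.\ $y\in H$; integrating over $H$ against $\nu$, dividing by $\nu(H)$, and using $\nu(H)\ge\tfrac12$ and $q\le u_0$ on $\supp\nu$, I obtain
\begin{equation}\label{pf-sep-avg}
2\,\delta^{-s}\ \le\ 4\sum_{j=2}^{N}\int_{H}|y-x_j|^{-s}\,\d\nu(y)\ +\ \frac{\T(N)}{N}\,u_0.
\end{equation}

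Finally I would estimate the truncated kernel integrals: on $H$ one has $|y-x_j|>\rho_N$ for $j\ge 2$, so by the layer‑cake formula and the Frostman bound
\begin{equation}\label{pf-sep-tail}
\int_{H}|y-x_j|^{-s}\,\d\nu(y)\le\int_{0}^{\rho_N^{-s}}\nu\bigl(B(x_j,\lambda^{-1/s})\bigr)\,\d\lambda\le\int_{0}^{\rho_N^{-s}}\min\{1,C_0\lambda^{-d/s}\}\,\d\lambda.
\end{equation}
For $s>d$ the right-hand integral is $\le\frac{C_0}{1-d/s}\rho_N^{d-s}$, a constant multiple of $N^{(s-d)/d}$; summing over the $N-1$ values of $j$ produces a bound of order $N^{s/d}$, and since $\T(N)/N=N^{s/d}$, \eqref{pf-sep-avg} yields $\delta^{-s}\le C\,N^{s/d}$, i.e.\ $\delta\ge C(A,s,d,q)\,N^{-1/d}$. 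For $s=d$ the right-hand integral is $\le C_0\bigl(1+\log^{+}(\rho_N^{-d}/C_0)\bigr)=O(\log N)$; summing over $j$ gives $O(N\log N)$, and since $\T(N)/N=N\log N$, \eqref{pf-sep-avg} yields $\delta^{-d}\le C\,N\log N$, i.e.\ $\delta\ge C(A,s,d,q)\,(N\log N)^{-1/d}$.

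The main obstacle is the hypersingularity $s\ge d$, which makes the Riesz potential $y\mapsto\int|y-x|^{-s}\,\d\nu(y)$ of any Frostman measure identically $+\infty$: one cannot average \eqref{pf-sep-cmp} over all of $A(u_0)$, and the averaging has to be restricted to the region $H$ lying a distance $\rho_N\asymp N^{-1/d}$ (up to a logarithm when $s=d$) away from the configuration, with the entire gain coming from the exact evaluation of the truncated integral \eqref{pf-sep-tail}. A secondary technical point is that $q$, being only lower semi-continuous and possibly unbounded, must be controlled by drawing $y$ from the sublevel set $A(u_0)$; the upper‑regularity estimate of Frostman's lemma is also exactly what makes $H$ retain at least half the $\nu$-mass uniformly in $N$.
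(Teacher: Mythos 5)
Your proposal is correct and follows essentially the same route as the paper: a one-point replacement inequality for the minimizer, averaged against a Frostman measure over the sublevel set of $q$ minus $N$ balls of radius $\asymp N^{-1/d}$ (which retain a fixed fraction of the mass), with the truncated Riesz integral estimated by the layer-cake formula and the $r^d$ upper-regularity bound. The only organizational difference is that the paper packages the averaging step as a separate bound $U(x,\hat{\omega}_N)\le C\,\T(N)/N$ valid for every $x\in\hat{\omega}_N$ (Lemma \ref{separation_lemma}, reused for Corollary \ref{restrict-minimizers}), whereas you apply it directly to the closest pair.
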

To prove Theorem \ref{separation}, we will need the following lemma which is also of independent interest. For a sequence of configurations $ \{\omega_N\}_{N\geq2} $ we consider the quantity
\begin{equation}\label{variable_point}
U(x,\omega_N):= \sum_{\substack{y\in {\omega}_N:\\ y\neq x  }} |x-y|^{-s}+ q(x)\tau_{s,d}(N)/N.
\end{equation}
\begin{lemma}\label{separation_lemma}
	Let the assumptions of Theorem \ref{separation} be satisfied. Then there exists a constant $C(A,s,d,q)$ such that for  every $(s,d,q)$-energy minimizing configuration $\hat{\omega}_N, \ N \geq 2, $ and each $ x\in\hat{\omega}_N $ there holds
	\begin{equation} \label{separation_variable_x}
	U(x,\hat{\omega}_N)\leq C(A,s,d,q) \begin{cases}
	N^{s/d} & s >d; \\
	N\log N    & s=d, 
	\end{cases} \qquad  N\geq 2.
	\end{equation}
\end{lemma}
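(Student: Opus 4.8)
The plan is to convert minimality into an upper bound on $U(x,\hat\omega_N)$ by a single-point replacement, and then to locate a cheap replacement point by an averaging argument. For the replacement step, fix $x\in\hat\omega_N$ and an arbitrary $z\in A\setminus\hat\omega_N$; since $(\hat\omega_N\setminus\{x\})\cup\{z\}$ is again an admissible $N$-point configuration and the factor $\T(N)/N$ is unchanged, minimality of $\hat\omega_N$ gives
\[
2\!\!\sum_{b\in\hat\omega_N\setminus\{x\}}\!\! |x-b|^{-s}+\frac{\T(N)}{N}q(x)\ \le\ 2\!\!\sum_{b\in\hat\omega_N\setminus\{x\}}\!\! |z-b|^{-s}+\frac{\T(N)}{N}q(z).
\]
Discarding one (nonnegative) copy of the Riesz sum on the left, this reads $U(x,\hat\omega_N)\le 2\sum_{b\ne x}|z-b|^{-s}+\T(N)q(z)/N$. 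As $\T(N)/N$ equals $N^{s/d}$ for $s>d$ and $N\log N$ for $s=d$, it remains to exhibit, for each large $N$, a point $z=z(x,N)\in A\setminus\hat\omega_N$ with $q(z)$ bounded by a constant and $\sum_{b\in\hat\omega_N\setminus\{x\}}|z-b|^{-s}$ at most $CN^{s/d}$ (respectively $CN\log N$).

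To set up the averaging, I first fix a uniformly regular reference set. Since $q$ is lower semi-continuous and finite on a set of positive $\h$-measure, there is $u_0\in\R$ with $\h(A(u_0))>0$, and after passing to a compact subset we may assume $\h(A(u_0))<\infty$. By the density theorem for Hausdorff measure, $\h$-a.e.\ point of $A(u_0)$ has finite upper $d$-density, so we may choose a compact $A'\subseteq A(u_0)$ with $\h(A')>0$ and constants $C_0,r_0>0$ such that $\h(A\cap B(y,r))\le C_0 r^d$ for $y\in A'$, $0<r\le r_0$; consequently $\h(A'\cap B(b,\rho))\le C_0'\rho^d$ for all $b\in A$ and $\rho\le r_0$, by centering a ball of doubled radius at a point of $A'\cap B(b,\rho)$. (When $A$ is $d$-rectifiable, as in the rest of the paper, this regularity holds directly and one may take $A'=A(u_0)$.)

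Now put $\epsilon:=cN^{-1/d}$ with $c$ small, to be fixed, and let $G_N:=A'\setminus\bigcup_{b\in\hat\omega_N\setminus\{x\}}B(b,\epsilon)$. The measure bound of the previous paragraph gives $\h(G_N)\ge\h(A')-(N-1)C_0'\epsilon^d\ge\h(A')-C_0'c^d\ge\tfrac12\h(A')>0$ provided $c$ is small enough. For $z\in G_N$ every $b\in\hat\omega_N\setminus\{x\}$ satisfies $|z-b|\ge\epsilon$, so the layer-cake formula together with $\h(A'\cap B(b,\rho))\le\min(\h(A'),C_0'\rho^d)$ yields, for large $N$,
\[
\int_{G_N}|z-b|^{-s}\,\d\h(z)\ \le\ \int_{A'\setminus B(b,\epsilon)}|z-b|^{-s}\,\d\h(z)\ \le\ \begin{cases}C\,\epsilon^{\,d-s}, & s>d,\\[2mm] C\log(1/\epsilon), & s=d.\end{cases}
\]
Interchanging sum and integral, summing over the $N-1$ points $b$, and using $\epsilon=cN^{-1/d}$, we obtain $\int_{G_N}\sum_{b\ne x}|z-b|^{-s}\,\d\h(z)\le C'N^{s/d}$ for $s>d$ and $\le C'N\log N$ for $s=d$. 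Since $\h(G_N)\ge\tfrac12\h(A')$, there is some $z\in G_N$ — then $z\notin\hat\omega_N\setminus\{x\}$, and we may also take $z\ne x$ — with $\sum_{b\ne x}|z-b|^{-s}\le 2C'\h(A')^{-1}N^{s/d}$ (resp.\ $\le 2C'\h(A')^{-1}N\log N$) and $q(z)\le u_0$. Inserting this $z$ into the replacement inequality gives $U(x,\hat\omega_N)\le C N^{s/d}$ for $s>d$, and $U(x,\hat\omega_N)\le C N\log N$ for $s=d$, uniformly in $x\in\hat\omega_N$ and for all $N\ge N_0(A,s,d,q)$; the finitely many remaining $N$ are absorbed by enlarging the constant.

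The crux is the choice of the replacement point: it must lie both in a hole of $\hat\omega_N$ of radius $\asymp N^{-1/d}$ (so its interaction with $\hat\omega_N$ is $O(N^{s/d})$) and in a region where $q$ is bounded, and the single scale $\epsilon=cN^{-1/d}$ has to be calibrated so that $G_N$ still has positive $\h$-measure while the annular estimate delivers the exponent $s/d$ rather than the trivial $1+s/d$ — this saving is exactly what the local regularity $\h(A'\cap B(b,\rho))\lesssim\rho^d$ buys. Locating that regularity (via the density theorem) when $A$ is merely compact, coping with the non-integrable singularities of $z\mapsto\sum_b|z-b|^{-s}$ by integrating only over $G_N$, and tracking the logarithm in the borderline case $s=d$ are the remaining technical points.
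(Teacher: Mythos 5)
Your proof is correct and follows the same overall strategy as the paper's: convert minimality into the comparison $U(x,\hat{\omega}_N)\lesssim \sum_{b\neq x}|z-b|^{-s}+\T(N)q(z)/N$ for an alternative point $z$, then locate a good $z$ by averaging over a set that avoids $N^{-1/d}$-neighborhoods of the configuration and on which $q$ is bounded. The one substantive difference is the source of the upper $d$-regularity needed for the averaging: the paper invokes Frostman's lemma to get a measure $\lambda$ on $A$ with $\lambda(A)>0$ and $\lambda(B(y,R)\cap A)\le R^d$ globally, and averages against $\lambda$ restricted to a sublevel set $A(H)$ of large $\lambda$-measure; you instead extract (via Besicovitch--Davies and the upper density theorem, uniformized on a compact subset) a set $A'\subseteq A(u_0)$ of positive finite $\h$-measure on which $\h$ is upper $d$-regular at small scales, and average against $\h$ restricted to $A'$. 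Both yield the same layer-cake estimate and the same $N^{s/d}$ (resp.\ $N\log N$) bound; Frostman is the lighter tool since it gives the regularity at all scales and centers with no doubling trick. Two small remarks: your claim $\h(A\cap B(y,r))\le C_0r^d$ for $y\in A'$ is not justified when $\h(A)=\infty$ (the density theorem controls only $\h(E\cap B(y,r))$ for the finite-measure set $E$ you passed to), but this is harmless since every subsequent estimate uses only $\h(A'\cap B(b,\rho))$; and your explicit handling of the factor $2$ in the replacement inequality, using nonnegativity of the Riesz sum, is actually more careful than the paper's bare assertion $U(x,\hat{\omega}_N)\le U(z,\hat{\omega}_N)$.
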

\begin{corollary}\label{restrict-minimizers}
	Let the assumptions of Theorem \ref{separation} hold. Then there exists a constant $ C = C(A,s,d,q) $ such that for all $ N\geq 2 $, the minimizers $ \hat{\omega}_N $ are contained in the set $ A(C) $.
\end{corollary}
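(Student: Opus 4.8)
The plan is to read the containment straight off Lemma~\ref{separation_lemma}, taking $C$ to be the constant produced there. Fix $N\geq 2$, let $\hat\omega_N$ be an $N$-point $(s,d,q)$-energy minimizer on $A$, and pick any $x\in\hat\omega_N$. In the expression \eqref{variable_point} for $U(x,\hat\omega_N)$, the Riesz part $\sum_{y\in\hat\omega_N,\ y\neq x}|x-y|^{-s}$ is a sum of positive terms, hence nonnegative; discarding it gives
\[
q(x)\,\frac{\tau_{s,d}(N)}{N}\ \leq\ U(x,\hat\omega_N).
\]
Lemma~\ref{separation_lemma} bounds $U(x,\hat\omega_N)$ by $C(A,s,d,q)\,N^{s/d}$ for $s>d$ and by $C(A,s,d,q)\,N\log N$ for $s=d$, and in both cases the right-hand side is exactly $C(A,s,d,q)\,\tau_{s,d}(N)/N$ by the definition of $\tau_{s,d}$. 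Combining the two inequalities and cancelling the strictly positive factor $\tau_{s,d}(N)/N$ leaves $q(x)\leq C(A,s,d,q)$, i.e. $x\in A\big(C(A,s,d,q)\big)$. As $x$ and $N$ were arbitrary, setting $C:=C(A,s,d,q)$ yields $\hat\omega_N\subset A(C)$ for every $N\geq 2$.

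I do not expect any genuine obstacle: the entire content sits in Lemma~\ref{separation_lemma}, and what is left is bookkeeping. The one thing to verify is that the $N$-dependence on the two sides of the displayed comparison matches, so that dividing through genuinely removes it; this is immediate from $\tau_{s,d}(N)/N=N^{s/d}$ when $s>d$ and $\tau_{s,d}(N)/N=N\log N$ when $s=d$, using $\log N>0$ for $N\geq 2$. It is also worth noting that the nonnegativity of $q$ assumed in Theorem~\ref{separation} (and inherited here) is not actually needed to discard the Riesz sum --- that sum is nonnegative in any case --- but it is what makes $A(C)$ a genuine sublevel set.
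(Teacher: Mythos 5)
Your argument is correct and is exactly the paper's (one-line) proof, with the trivial bookkeeping written out: drop the nonnegative Riesz sum from $U(x,\hat\omega_N)$, apply Lemma~\ref{separation_lemma}, and cancel $\tau_{s,d}(N)/N$. Your side remark about where nonnegativity of $q$ enters is also accurate.
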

Due to this corollary, the sets $ \{\hat{\omega}_N\}_{N\geq2} $ for the problem of minimizing the $ (s,d,q) $-energy on the whole space $ \R^d $ are restricted to a compact set, provided that for some compact $ A  $ and a large enough cube $ Q: = [-R,R]^d $ with $ A\subset Q $, the value  $ C $ in \eqref{separation_variable_x} is such that $ q(x) > C $ for any $ x $ not in $ Q $. Such a problem is then equivalent to energy minimization on $ Q $ only.

To prove the covering property of $ (s,d,q) $-energy minimizers, we will need the notion of Ahlfors regularity \cite[Definition 1.13]{david1993analysis}. 
A set $ A\subset\R^{p} $ with $ \mathcal{H}_d(A)>0 $ is called $ d $\textit{-regular with respect to} $ \lambda $ if  there  are positive constants $ c_0,\ C_0 $ and a positive locally finite Borel measure $ \lambda $, such that 
\begin{equation}\label{regular_WRT_measure}
c_0 R^d\leq \lambda\big( B(x,R)\cap A \big) \leq C_0 R^d 
\end{equation}
for all $ x\in A $ and $ 0<R\leq \diam A $. In the case $ \lambda = \h $, the set $ A $ is called \textit{Ahlfors regular with dimension $d$}.

For an $ x\in A $ and an $ N $-point collection $ \omega_N  $ define 
\[ \dist(x, {\omega}_N) :=\min_{y \in {\omega}_N} |y-x |, \]
the \textit{covering radius at} $ x $ with respect to $ {\omega}_N $.

\begin{theorem}\label{covering}
Let $0<d\leq p$ and $s > d$. Assume $A\subset \R^{p} $  is compact, $d$-rectifiable and Ahlfors regular with dimension $ d $. Assume also $q\geq 0 $ is a continuous function. Let $ x\in  A(L_1-h) $ for some $ h>0 $, where $ L_1 $ is defined in Theorem \ref{thm_weak_lim}. Then  for each sequence of $ (s,d,q) $-energy minimizers $ \{\hat{\omega}_N\}_{N\geq2} $, there exists a constant $  C(A, h,s,d,q) $ such that
\[\dist(x,\hat{\omega}_N)\leq C(A,  h,s,d,q)  N^{-1/d}, \qquad N\geq 2.\]
\end{theorem}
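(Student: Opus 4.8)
The plan is to argue by contradiction: if the empty ball $B(x,\dist(x,\hat\omega_N))$ is too large (radius a large multiple of $N^{-1/d}$), then replacing a suitable point of $\hat\omega_N$ lying in a slightly larger ball around $x$ by the point $x$ itself strictly lowers $\e$, contradicting $\e(\hat\omega_N)=\E(A,N)$. Two preliminary facts will be used repeatedly. Since $q\ge 0$ is lower semi-continuous and $s>d$, Theorem \ref{separation} gives $c_{\mathrm{sep}}=c_{\mathrm{sep}}(A,s,d,q)>0$ with $\delta(\hat\omega_N)\ge c_{\mathrm{sep}}N^{-1/d}$; together with Ahlfors regularity of $A$ and a dyadic-annulus count, this yields the ``hole estimate''
\[
 \sum_{y\in\hat\omega_N}|x-y|^{-s}\ \le\ C_4\,N\,\dist(x,\hat\omega_N)^{\,d-s},\qquad C_4=C_4(A,s,d),
\]
valid whenever $\dist(x,\hat\omega_N)\ge c_{\mathrm{sep}}N^{-1/d}$; the right side is $\le C_4\lambda^{d-s}N^{s/d}$, small for large $\lambda$, if $\dist(x,\hat\omega_N)>\lambda N^{-1/d}$. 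Also, by Theorem \ref{thm_weak_lim} the counting measures converge weak$^*$ to $\mu_A^q=g\,\d\H$ with $g=\big((L_1-q)/(\c(1+s/d))\big)_+^{d/s}$; since $q$ is continuous and $q(x)\le L_1-h<L_1$, $g$ is continuous near $x$ and $g(x)\ge\big(h/(\c(1+s/d))\big)^{d/s}>0$. Because every sequence of minimizers is asymptotically $(s,d,q)$-energy minimizing, this weak$^*$ convergence is in fact uniform over all such sequences, which is what keeps the final constant dependent only on $A,h,s,d,q$.

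Fix $\lambda>0$, to be taken large, and suppose $\dist(x,\hat\omega_N)>\lambda N^{-1/d}$ for infinitely many $N$; fix also a small $\sigma>0$ and set $m_N:=\#(\hat\omega_N\cap B(x,\sigma))$. By Ahlfors regularity and continuity of $g$, $\mu_A^q(\mathring B(x,\sigma))\ge c\,g(x)\,\sigma^d>0$ for a constant $c=c(A)>0$ and $\sigma$ small, so the uniform weak$^*$ convergence gives $m_N\ge c_1 N$ for all large $N$, with $c_1\ge \tfrac12 c\,g(x)\sigma^d$. Since $\hat\omega_N\cap B(x,\sigma)$ is an $m_N$-point subset of the compact $d$-rectifiable set $B(x,\sigma)\cap A$ and $s>d$, the Poppy-seed theorem (Theorem \ref{poppy seed}) gives, for large $N$,
\[
 \sum_{\substack{z,y\in\hat\omega_N\cap B(x,\sigma)\\ z\ne y}}|z-y|^{-s}\ \ge\ \tfrac12\,\c\,\frac{m_N^{\,1+s/d}}{\H(B(x,\sigma))^{s/d}}\ \ge\ \tfrac{\c}{2}\,\frac{(c_1N)^{s/d}}{(C_0\sigma^d)^{s/d}}\,m_N\ \ge\ \kappa\,m_N\,N^{s/d},
\]
where the $\sigma^d$ coming from $c_1$ cancels the $\sigma^d$ in the denominator, so $\kappa=\kappa(A,h,s,d,q)>0$ is independent of $\sigma$ (and bounded below uniformly for $x\in A(L_1-h)$). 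Adding the nonnegative external-field contribution and averaging the resulting inequality over the $m_N$ points of $\hat\omega_N\cap B(x,\sigma)$ produces a point $z^*\in\hat\omega_N\cap B(x,\sigma)$ with
\[
 2\!\!\sum_{\substack{y\in\hat\omega_N\\ y\ne z^*}}\!\!|z^*-y|^{-s}+\tfrac{\T(N)}{N}\,q(z^*)\ \ge\ (2\kappa+q_\sigma)\,N^{s/d},\qquad q_\sigma:=\inf_{B(x,\sigma)\cap A}q.
\]

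Now form $\omega':=(\hat\omega_N\setminus\{z^*\})\cup\{x\}$, an admissible $N$-point subset of $A$ since $x\notin\hat\omega_N$. With $\T(N)/N=N^{s/d}$, the energy difference is
\[
 \e(\omega')-\e(\hat\omega_N)=2\!\!\sum_{y\in\hat\omega_N\setminus\{z^*\}}\!\!|x-y|^{-s}+N^{s/d}q(x)-\Big(2\!\!\sum_{\substack{y\in\hat\omega_N\\ y\ne z^*}}\!\!|z^*-y|^{-s}+N^{s/d}q(z^*)\Big),
\]
so by the hole estimate and the lower bound just obtained, $\e(\omega')-\e(\hat\omega_N)\le N^{s/d}\big(2C_4\lambda^{d-s}+q(x)-q_\sigma-2\kappa\big)$. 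Choosing $\sigma$ small enough that $q(x)-q_\sigma\le\kappa$ (continuity of $q$) and then $\lambda$ large enough that $2C_4\lambda^{d-s}<\kappa$ (possible since $s>d$) makes this strictly negative, contradicting minimality. Thus, for the chosen $\lambda$, only finitely many $N$ satisfy $\dist(x,\hat\omega_N)>\lambda N^{-1/d}$, and by the uniformity above this finite set does not depend on the sequence of minimizers; since $\dist(x,\hat\omega_N)\le\diam A$ for all $N$, enlarging the constant to absorb those finitely many indices gives $\dist(x,\hat\omega_N)\le C(A,h,s,d,q)\,N^{-1/d}$ for all $N\ge2$.

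I expect the main obstacle to be the two-sided control of $\hat\omega_N$ inside the auxiliary ball $B(x,\sigma)$: one must establish (i) that $B(x,\sigma)$ carries at least a fixed positive fraction of the $N$ points — the only step that genuinely uses positivity of the limiting density $g(x)$, i.e.\ the hypothesis $q(x)\le L_1-h$, together with Ahlfors regularity — and (ii) a matching lower bound on the internal Riesz $s$-energy of those points via the Poppy-seed theorem applied to $B(x,\sigma)\cap A$, arranged so that the dependence on $\sigma$ cancels and $\kappa$ remains $\sigma$-free. The remaining ingredients — the one-point swap identity, the hole estimate $\sum_{y\in\hat\omega_N}|x-y|^{-s}\lesssim\lambda^{d-s}N^{s/d}$, and the treatment of the finitely many exceptional $N$ — are routine.
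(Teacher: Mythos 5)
Your proposal is correct and follows essentially the same route as the paper: the paper's Lemma \ref{covering_lemma} is precisely your swap-and-average step (minimality compared between $x$ and the points of $\hat{\omega}_N$ in a small ball around $x$, with the cluster guaranteed $\sim N$ points via weak$^*$ convergence and the positivity of the limit density coming from $q(x)\le L_1-h$, and the unweighted Poppy-seed theorem supplying the cluster's internal energy), while the paper's Lemma \ref{lemma_pre-covering} is your separation-plus-Ahlfors ``hole estimate'' stated in contrapositive form. The differences are only organizational — you run a direct contradiction with a single exchanged point instead of first proving the lower bound $U_r(x,\hat{\omega}_N)\gtrsim N^{s/d}$, and you treat the large-$N$/uniformity bookkeeping at about the same level of detail as the paper itself does.
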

A sequence of configurations $ \{\omega_N \}_{N\geq1} $ is said to be \textit{quasi-uniform in }$ M\subset A $ if the ratio
\begin{equation}\label{mesh_ratio}
 \gamma(x;\omega_N,A):=\dist(x,{\omega}_N)/\delta(\omega_N) 
\end{equation}
is bounded uniformly for all $ x\in M $ and $ N\geq 1 $. From Theorems \ref{separation} and \ref{covering} we have the following result.
\begin{corollary}\label{thm_mesh_ratio}
Let $ s>d $. Assume $ A\subset\R^{p} $ is compact, $ d $-rectifiable and  Ahlfors regular with dimension $ d $. Suppose also that $ q: A\to \R $ is a continuous function. Then for any sequence of $ (s,d,q) $-energy minimizers $ \{\hat{\omega}_N\}_{N\geq2} $ on $ A $,
sequence of subsets  $ \{\hat{\omega}_N\cap A(L_1-h)\}_{N\geq2} $ is quasi-uniform in $ A(L_1-h)$ for any $ h>0 $. That is, for some constant $ C= C(A,  h,s,d,q) $ there holds:
\[ \gamma(x;\omega_N,A(L_1-h)) \leq C(A,  h,s,d,q), \qquad x\in A(L_1-h), \ N \geq 2. \]

\end{corollary}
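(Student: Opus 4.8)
The plan is to derive Corollary \ref{thm_mesh_ratio} as a straightforward combination of the separation bound from Theorem \ref{separation} and the covering bound from Theorem \ref{covering}, so the work is essentially bookkeeping on the two constants. First I would fix $h>0$ and note that, since $q$ is continuous on the compact set $A$, it is in particular lower semi-continuous, bounded below, and (after adding a constant, which shifts $q$ but does not affect minimizers of \eqref{s_energy} beyond a harmless additive change, or alternatively by just working with $q-\min_A q\ge 0$) we may assume $q\ge 0$; this puts us in the hypotheses of both Theorem \ref{separation} and Theorem \ref{covering}. Applying Theorem \ref{separation} gives a constant $C_1=C_1(A,s,d,q)>0$ with
\[
\delta(\hat\omega_N)\ge C_1 N^{-1/d},\qquad N\ge 2,
\]
using that $s>d$. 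Applying Theorem \ref{covering} (whose Ahlfors-regularity and $d$-rectifiability hypotheses are exactly those assumed in the corollary) gives, for each $x\in A(L_1-h)$, a constant $C_2=C_2(A,h,s,d,q)$ with
\[
\dist(x,\hat\omega_N)\le C_2 N^{-1/d},\qquad N\ge 2.
\]

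Dividing the second inequality by the first then yields, for every $x\in A(L_1-h)$ and every $N\ge 2$,
\[
\gamma(x;\hat\omega_N,A(L_1-h))=\frac{\dist(x,\hat\omega_N)}{\delta(\hat\omega_N)}\le \frac{C_2 N^{-1/d}}{C_1 N^{-1/d}}=\frac{C_2}{C_1}=:C(A,h,s,d,q),
\]
which is the asserted uniform bound and, by definition \eqref{mesh_ratio}, exactly the statement that $\{\hat\omega_N\cap A(L_1-h)\}_{N\ge2}$ is quasi-uniform in $A(L_1-h)$. Two small points need a word of care. One is that the definition of $\gamma$ in \eqref{mesh_ratio} uses $\dist(x,\hat\omega_N)$ and $\delta(\hat\omega_N)$ computed with respect to the full configuration $\hat\omega_N$, not the restricted set $\hat\omega_N\cap A(L_1-h)$; this is actually favorable, since passing to a subconfiguration can only increase $\dist$ and only increase (or leave unchanged) $\delta$, so we want the bound stated for the full configuration, which is what Theorems \ref{separation} and \ref{covering} directly provide. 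The other is that the covering estimate is only claimed for points $x\in A(L_1-h)$, which is precisely why the conclusion is localized to that sublevel set rather than all of $A$; outside it one has no control because the limiting measure $\mu_A^q$ may vanish there and the nearest point of $\hat\omega_N$ can be far away.

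I do not anticipate a genuine obstacle here, since all the substance is in Theorems \ref{separation} and \ref{covering}; the only mild subtlety is ensuring the hypotheses of those two theorems are simultaneously met by the continuous $q$ in the corollary (nonnegativity for Theorem \ref{separation}, continuity and nonnegativity for Theorem \ref{covering}), which is handled by the normalization $q\mapsto q-\min_A q$ noted above, together with the observation that this normalization changes $L_1$ and $A(u)$ consistently so that the set $A(L_1-h)$ in the conclusion is unambiguous. Consequently the proof is a two-line deduction once the constants are named.
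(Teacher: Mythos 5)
Your proposal is correct and is exactly the paper's intended argument: the corollary is stated as an immediate consequence of Theorems \ref{separation} and \ref{covering}, obtained by dividing the covering bound $\dist(x,\hat\omega_N)\le C_2 N^{-1/d}$ by the separation bound $\delta(\hat\omega_N)\ge C_1 N^{-1/d}$. Your two side remarks -- normalizing $q\mapsto q-\min_A q$ to meet the nonnegativity hypotheses (which shifts $L_1$ and the sublevel sets consistently and leaves minimizers unchanged), and reading $\gamma$ as in \eqref{mesh_ratio} with respect to the full configuration as the displayed inequality indicates -- are both appropriate and resolve the only points the paper leaves implicit.
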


\section{Examples and numerics}\label{s_examples}

All the results of this section were obtained by using default Mathematica routines (\mbox{FindMinimum}) to minimize the energy functional, starting with a randomly generated collection of point charges. We will write $ L_1(q,A) $ to show explicitly the set on which we are solving the minimization problem and the external field acting on it.

In this section $ \mathbf{e}_z:=(0,\ 0,\ 1)^T $ is the basis vector.

\begin{example}\label{mult}

Consider the problem of minimizing \eqref{s_energy} with $ s=2 $ and an external field
\[ q_\text{a}(x)= \cos (3\arccos \langle x,\mathbf{e}_z \rangle )^{16}   \]
on the unit sphere $ \mathbb{S}^2\subset\R^3 $.  According to \eqref{c_dd}, $ C_{2,2} =\pi $. Equation \eqref{L} for $ L_1(q_\text{a},\mathbb{S}^2) $  in this case is 
\begin{equation}\label{L_example}
 \int_{\mathbb{S}^2} \left(\frac{L-q_\text{a}(x) }{2\pi}\right)_+ \d \mathcal{H}_2(x) =1, 
\end{equation}
solving it for $ L $ gives $ L_1(q_\text{a},\mathbb{S}^2) \approx 0.65448 $. Figure \ref{fig:mult_profile} is the  graph of $ q_\text{a} $ depending on $ \langle x,\mathbf{e}_z \rangle $.

\begin{figure}[h!]
	    \centering
    \begin{subfigure}{.45\linewidth}
      \includegraphics[scale=0.4]{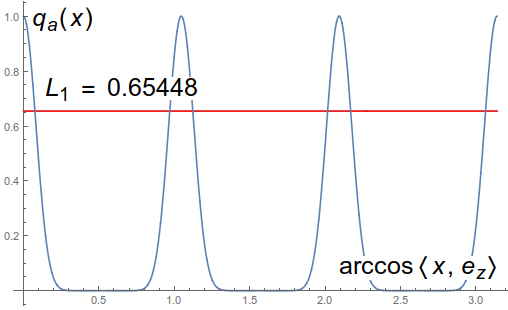}
    \end{subfigure}
    \hskip2em
    \begin{subfigure}{.45\linewidth}
    	\includegraphics[scale=0.4]{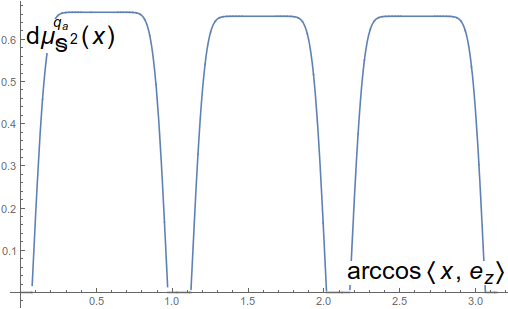}
    \end{subfigure}
     \caption{ Left: graph of $ q_\text{a}(x) $, right: $ \d\mu_{\mathbb{S}^2}^{q_\text{a}}(x) $ from Example \ref{mult}. The horizontal axis is $\arccos (\langle x,\mathbf{e}_z \rangle) $. }
    \label{fig:mult_profile}
\end{figure}
Density of $ \mu_{\mathbb{S}^2}^{q_\text{a}} $ for this external field is
\[ \d\mu_{\mathbb{S}^2}^{q_\text{a}} (x) = \left(\frac{   L_1(q_\text{a},\mathbb{S}^2)  - \cos (3\arccos \langle x,\mathbf{e}_z \rangle )^{16} }{2\pi}\right)_+ \d\H. \]
Using the numeric method described above, we obtain an approximate minimizer $ \omega_\text{a} $ pictured in Figure  \ref{fig:mult}.

\begin{figure}[h!]
    \centering
\begin{subfigure}{.45\linewidth}
        \includegraphics[scale=0.35]{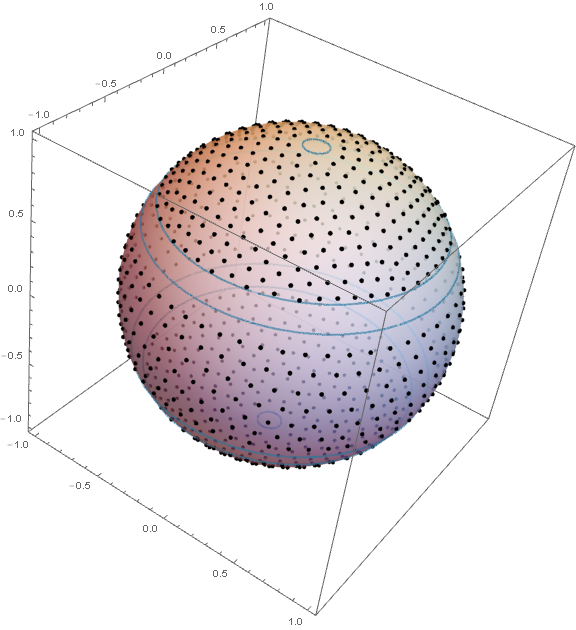}
    \end{subfigure}
    \hskip2em
    \begin{subfigure}{.45\linewidth}
        \includegraphics[scale=0.35]{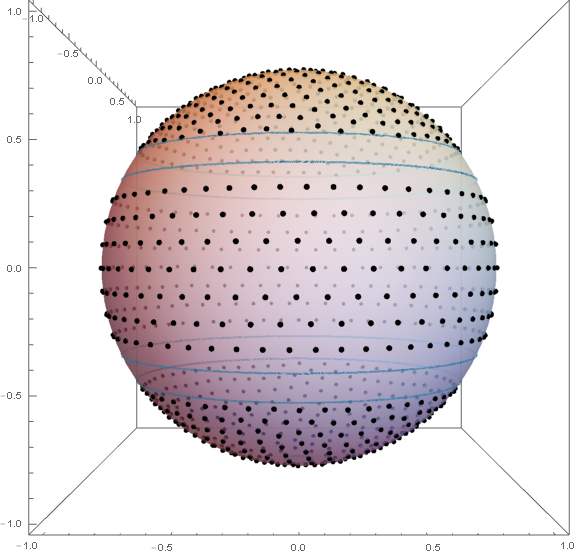}
    \end{subfigure}
     \caption{ Two views of an approximate $ 1000 $-point $ (2,2,q_\text{a})$-energy minimizer $ \omega_\text{a} $ from Example \ref{mult}. The latitudinal circles denote the boundaries of $ \supp  \mu_{\mathbb{S}^2}^{q_\text{a}} $; i.e., $ \{x: q_\text{a}(x)=L_1(q_\text{a},\mathbb{S}^2) \} $. } 
    \label{fig:mult}
\end{figure}

Evaluating separation distance for  $ \omega_\text{a} $ gives $ \delta( \omega_\text{a} )\approx 0.0813 $. Covering radius for the middle strip is $ \eta_{\text{mid}}\approx 0.0829 $, and for the other two  $ \eta_{\text{polar}}\approx 0.0727$, whence mesh ratio is $ \gamma_{\text{mid}}\approx 1.02 $ and $ \gamma_{\text{polar}}\approx 0.8942 $ respectively.
\end{example} 
\begin{example}\label{caps}
	
	\begin{figure}[h!]
		\centering
		\begin{subfigure}{.45\linewidth}
			\includegraphics[scale=0.45]{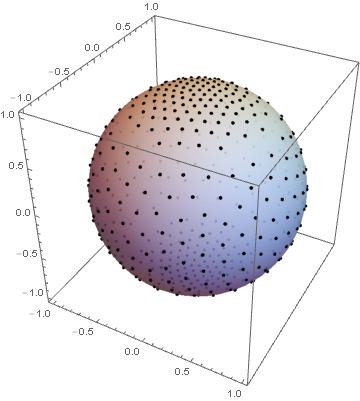}
		\end{subfigure}
		\hskip2em
		\begin{subfigure}{.45\linewidth}
			\includegraphics[scale=0.45]{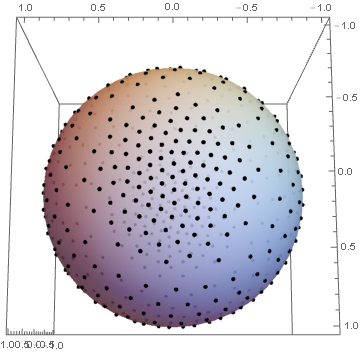}
		\end{subfigure}
		\caption{Two views of an approximation of $ 500 $-point $ (2,2,q_2)$-energy minimizer $ \omega_\text{b} $ from Example \ref{caps}. } 
		\label{fig:caps}
	\end{figure}
	
Again, let $ A=\mathbb{S}^2\subset\R^3, \ s=d=2 $. Let us construct a sequence of discrete collections $ \{\hat{\omega}_N\}_{N\geq2} $ weak$ ^* $ converging to the probability distribution with density proportional to 
\begin{equation}\label{caps_density}
\rho_\text{b}(x)= \begin{cases}
10\cos(4\phi)+11, & 0\leq\phi<\pi/4, \\
1, & \pi /4\leq \phi <3\pi/4,\\
10\cos(4\phi)+11, & 3\pi/4\leq\phi,
\end{cases}
\end{equation}
where  $ \phi=\arccos (\langle x,\mathbf{e}_z \rangle ) $. The external field with such a sequence of minimizers is provided by Theorem \ref{thm_recovery_bounded}. Writing $ \rho $ for the normalization of \eqref{caps_density},   $ \rho(x):= \rho_\text{b}(x)/\int_{\mathbb{S}^2} |\rho_\text{b}|\d\mathcal{H}_2\approx \rho_\text{b}(x)/ 5.581722 $, equation  \eqref{q_definition} gives the following external field:
\[ q_\text{b}(x):=-2\pi\rho, \]
where we used again that $ C_{2,2}=\pi $.

An approximate discrete minimizer of this $ (2,2,q_\text{b}) $-energy is shown in the Figure \ref{fig:caps}.
Note how higher density of $ \mu_{\mathbb{S}^2}^{q_\text{b}} $  (equivalently, larger values of $ \rho $) causes charges to concentrate near the poles. Evaluating separation distance for the pictured configuration $ \omega_\text{b} $ gives $ \delta(\omega_\text{b})\approx 0.0777 $, covering radius $ \eta_{\text{b}} \approx 0.1681 $. The mesh ratio of $ \omega_\text{b} $ is therefore: $ \gamma(\omega_\text{b},\mathbb{S} ^2)\approx 2.163 $. 
\end{example}

\begin{example}\label{torus}
	\begin{figure}[h!]
		\centering
		\begin{subfigure}{.45\linewidth}
			\includegraphics[scale=0.45]{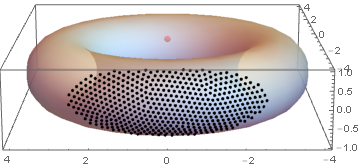}
		\end{subfigure}
		\begin{subfigure}{.45\linewidth}
			\includegraphics[scale=0.45]{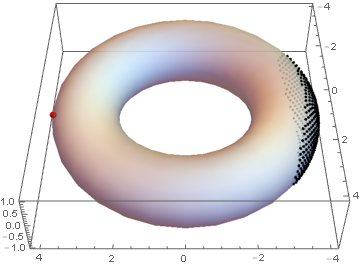}
		\end{subfigure}
		\caption{An approximation of $ 500 $-point $ (8,2,q_{\text{c}})$-energy minimizer $ \omega_c $ from Example \ref{torus}. The red dot marks position $ (4,\ 0,\ 0)^T $, where the repelling external field $ q_\text{c} $ is centered. } 
		\label{fig:torus}
	\end{figure}
	In this example the underlying set $ A $ is a $ 2 $-dimensional torus with inner radius $ r_i = 2 $, outer radius $ r_o = 4 $, centered at the origin. In particular, the point $ (4,0,0) $ lies on the outer side of its surface. Consider the problem of minimizing $( 8,2,q_{\text{c}} )$-energy with the external field 
	\[  q_{\text{c}}(x):=   \|x- (4,\ 0,\ 0)^T\|^{-2}   . \]  A resulting approximation of $ 500 $-point minimizer $ \omega_\text{c} $ is shown in Figure \ref{fig:torus}. Separation distance for this collection is $ \delta(\omega_\text{c}) \approx 0.125339 $.
\end{example}

\begin{example}\label{repelling}
	\begin{figure}[h!]
		\centering
		\begin{subfigure}{.45\linewidth}
			\includegraphics[scale=0.45]{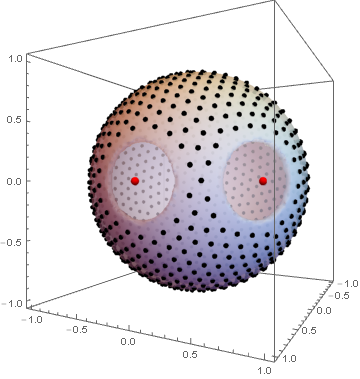}
		\end{subfigure}
		\begin{subfigure}{.45\linewidth}
			\includegraphics[scale=0.45]{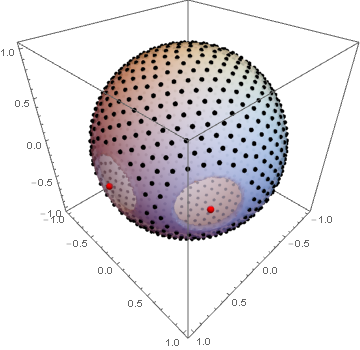}
		\end{subfigure}
		
		\caption{Two views of an approximation of $ 1000 $-point energy minimizer from Example \ref{repelling}. The support of $ \mu_{\mathbb{S}^2}^{q_\text{d}} $ is highlighted as are the positions of the fixed ``repelling charges"  that create the external field  $ q_\text{d} $. } 
		\label{fig:repelling}
	\end{figure}
	  Let us now consider an example of repelling field on the sphere $ \mathbb{S}^2\subset\R^3 $. Namely, we will minimize the $ (4,2,q_\text{d}) $-energy, where 
	\[ q_\text{d}:= 10^{-3}  \left(\left\| x- (1,0,0)^T\right\| ^{ -4 }+\left\| x- ({0.5691,\ 0.8223,\ 0})^T\right\| ^{ -4 }\right). \]
	The second repelling charge is a randomly selected point in the first quadrant of $ Oxy $ plane; factor $ 10^{-3} $ is used merely for convenience purposes.
	
	An approximate 1000-point minimizer $ \omega_\text{d} $ is shown in Figure \ref{fig:repelling}. The shaded region marks the support of $ \mu_{\mathbb{S}^2}^{q_\text{d}} $, obtained using formulas \eqref{L} with $ C_{4,2}\approx 5.7834 $ computed by the formula for its conjectured value \eqref{conj_val}. In other words, the shaded set is $ \{x: q_\text{d}(x)\leq L_1(q_\text{d},\mathbb{S}^2) \}\approx \{x: q_\text{d}(x)\leq 0.127 \} $ (thus the complement of the support in the sphere consists of two circular-like regions). The separation distance of the pictured configuration is $ \delta(\omega_\text{d}) \approx 0.1015 $.
\end{example}

\begin{example}\label{1d} 
	\begin{figure}[h!]
		\centering
		\begin{subfigure}{.45\linewidth}
			\includegraphics[scale=0.35]{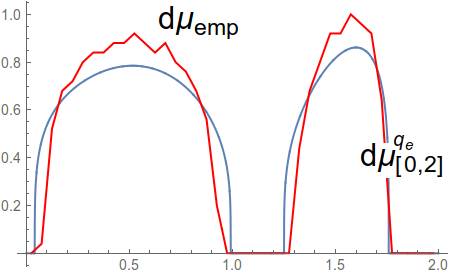}
		\end{subfigure}
		\hskip2em
		\begin{subfigure}{.45\linewidth}
			\includegraphics[scale=0.35]{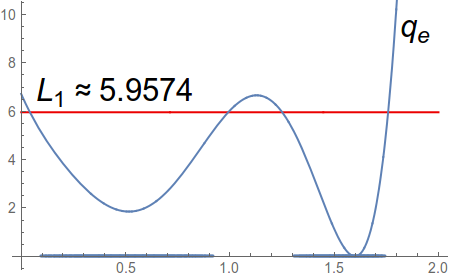}
		\end{subfigure}
		\caption{Left, empirical density $ \d \mu_\text{emp} $ of $ \omega_\text{e} $, an approximate $ 500 $-point $ (4,1, q_{\text{e}}(x) ) $-energy minimizer from Example \ref{1d} (red) overlaid with the graph of $ \d\mu_{[0,2]}^{q_\text{e}} $ (blue); right,  $ \omega_\text{e} $ and the graph of external field $ q_\text{e} $. } 
		\label{fig:1d}
	\end{figure}
		Finally, consider a $ 1 $-dimensional example. We will minimize the $ (4,1,q_{\text{e}}) $-energy on the interval $ [0,2] $, where 
		\[ q_{\text{e}}(x):= (x-1.6)^4+40 (x-0.2)^4 (x-1.6)^2. \]
		Substituting values of $ s,d, \c $ (the latter using formula \eqref{cs1}) into \eqref{L} gives that the weak$ ^* $ limit of minimizers is the measure with density
		\[ \d\mu_{[0,2]}^{q_\text{e}}  \approx \left(\frac{5.9574-q_\text{e}(\cdot)}{10.8232} \right)^{1/4}_+\d\mathcal{H}_1. \]
		Figure \ref{fig:1d} shows graphs of empirical density computed for $ \omega_\text{e} $ and $ \d\mu_{[0,2]}^{q_\text{e}} $, as well as the graph of $ q_\text{e} $. Separation distance of the pictured configuration is $ \delta(\omega_\text{e})\approx 0.0051 $.
\end{example}

\section{Proofs}\label{s_proofs}
	For $s\geq d$, we denote by $\cl$ the collection of all compact $d$-rectifiable sets  $A\subset\R^{p} $  with $\mathcal{H}_d(A)>0 $ and, in the case $ s=d $,    additionally require $A$ to be a subset of a $ d $-dimensional $C^1$-manifold in $\R^{p}$.  For $A\in \cl$ and a suitable external field $q$, the values of $ L_1 $ and $ S(q,A) $ are defined by \eqref{L} and \eqref{S_lim}, respectively. For real sequences $ \{\zeta_N\}_1^\infty $, $ \{\xi_N\}_1^\infty $ we shall use the notation  $  \zeta_N\sim \xi_N ,\ N\to\infty $ to mean $ \zeta_N/\xi_N \to1,\ N \to \infty $. 
	
	Observe that in the formula \eqref{s_energy} the scaling factor $ \T(N) $ depends on $ N $, the number of elements in $ \omega_N $. We will occasionally need to evaluate the $ (s,d,q) $-energy of a discrete $ \omega \subset A $ with $ \#\omega \neq N $ and the scaling factor $ \T(N) $, that is, the value of
	\begin{equation}\label{esd-N}
	 \e (\omega,N):= E^{0}_{s,d}(\omega) + \frac{\T(N)}{N }\,\sum_{x\in\omega} q(x),\quad s\geq d.
	\end{equation}
	Throughout this section $ \N $ stands for the set of positive integers.
For $s\geq d $ we also define 
\begin{equation*} 
\g(A) := \liminf_{N\to \infty} \frac{\E (A,N)}{\T(N) },\qquad
\G (A) := \limsup_{N\to \infty} \frac{\E (A,N)}{\T(N)},
\end{equation*}
and if the limits coincide, the common value is denoted by
\[g^q_{s,d} (A)=\g(A)=\G(A). \]
\begin{remark}\label{finite_limits}
Note that for $s\geq d$ both the lower and upper asymptotic limits are finite if $ q $ is finite-valued on a set of positive measure. Indeed, then there exists an $ \L  $ such that $ \H(\{ x\in A : q(x)\leq L_* \} ) >0 $, so the fact that $ \g(A) $ and $ \G(A) $ are finite follows from Theorem \ref{poppy seed} and a simple observation: for any two functions $ q_1,\ q_2 $ satisfying the hypotheses of Theorem \ref{thm_weak_lim}, the inequality $ q_1(x)\leq q_2(x), \ \forall x\in A, $ implies $ \mathcal{E}^{q_1}_{s,d}(A,N)\leq \mathcal{E}^{q_2}_{s,d}(A,N)$. It suffices to put $ q_1:=q,\  q_2(x)\equiv \L $ and restrict  the minimization problem to $\{ x\in A : q(x)\leq L_* \}$.
\end{remark}
\begin{remark}\label{L_is_finite}
If $ q $ is finite valued on a set of positive measure, then the constant $L_1$ in \eqref{L} is finite:
$$L_1\leq \c \left(\H(\{x\in A : q(x)<\L\})\right)^{-s/d}(1+s/d) +\L, $$
 where $ \L $ is as in Remark \ref{finite_limits}.
\end{remark}
\begin{remark}
	We will use in many computations that if a sequence  $ \{a_N\}_{N\in\n} $ with $ a_N>0 $ satisfies $ \lim_{\n\ni N\to\infty}  a_N/{N} = \alpha \geq 0 $, then
	\begin{equation}\label{T-limit}
	 \slim \frac{\T\left(a_N\right)}{\T(N)}  = \alpha^{1+s/d}.
	\end{equation}
\end{remark}
\subsection{Proofs of the main theorems} 
We first establish a few lemmas that will be used in the proof of Theorem \ref{thm_weak_lim}.
\begin{lemma}\label{function-lemma}
Let $u,v>0 $ and $q_0,q_1$ be real. Then the function 
\begin{equation}\label{F_t}
F(t):= tq_0+(1-t)q_1+ut^{1+s/d} +v(1-t)^{1+s/d},\quad s\geq d,
\end{equation}
has a unique minimum on $[0,1]$. If there is some  $t_*$ in $(0,1)$ that satisfies
\begin{equation}\label{argmin}
\frac{q_1-q_0}{1+s/d}=ut_*^{s/d}-v(1-t_*)^{s/d},
\end{equation}
then the minimum occurs at $t_*$. Otherwise, the minimum occurs at $ t_*\in \{0,1\} $ such that $ q_{1-t_*} = \min  \{ q_0, q_1 \} $.

\end{lemma}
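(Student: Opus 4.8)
\emph{Proof plan.} The plan is to exploit strict convexity of $F$ on $[0,1]$, which delivers existence and uniqueness of the minimizer in one stroke, and then to read off its location from a first-order analysis. So the first step is to compute, for $t\in(0,1)$,
\[
F'(t)=(q_0-q_1)+u\Bigl(1+\frac{s}{d}\Bigr)t^{s/d}-v\Bigl(1+\frac{s}{d}\Bigr)(1-t)^{s/d},\qquad
F''(t)=\frac{s}{d}\Bigl(1+\frac{s}{d}\Bigr)\bigl(u\,t^{s/d-1}+v\,(1-t)^{s/d-1}\bigr).
\]
Since $u,v>0$ and $s\ge d$ forces $s/d-1\ge0$, we get $F''>0$ on $(0,1)$; moreover, as $1+s/d\ge2$, $F$ is continuous on $[0,1]$ and of class $C^1$ there with finite one-sided derivatives at the endpoints. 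Hence $F$ is a continuous, strictly convex function on the compact interval $[0,1]$, so it attains its minimum at a single point $t_*$. That already gives the first assertion of the lemma.

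Next I would locate $t_*$ by examining $F'$, which is continuous on $[0,1]$ and strictly increasing (by $F''>0$), hence has at most one zero in $(0,1)$. If it has one, say at $t_*\in(0,1)$, then strict convexity forces that point to be the global minimizer, and rearranging $F'(t_*)=0$ yields precisely \eqref{argmin}; conversely any solution of \eqref{argmin} lying in $(0,1)$ is exactly such a zero, and it is unique. If $F'$ has no zero in $(0,1)$, then, being monotone and continuous, it keeps one sign there. When $F'>0$ on $(0,1)$, $F$ is increasing, so $t_*=0$; letting $t\to0^+$ in $F'$ gives $q_0-q_1\ge v(1+s/d)>0$, hence $q_1=\min\{q_0,q_1\}=q_{1-t_*}$. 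The case $F'<0$ on $(0,1)$ is symmetric, giving $t_*=1$ and, via $t\to1^-$, $q_0-q_1\le-u(1+s/d)<0$, so $q_0=\min\{q_0,q_1\}=q_{1-t_*}$. Strict monotonicity of $F'$ makes these two boundary alternatives mutually exclusive, so the trichotomy is exhaustive and internally consistent.

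I do not expect a genuine obstacle here: the argument is elementary convexity together with the intermediate value theorem. The only points calling for a little care are those tied to the exponent $1+s/d$ possibly being non-integer — one should note that it is at least $2$, which legitimizes the $C^1$ extension of $F$ to the endpoints and the finiteness of the one-sided derivatives used in the boundary cases — and the borderline case $s=d$, where $t^{s/d-1}\equiv1$ instead of vanishing; this only strengthens the strict-convexity estimate and causes no trouble.
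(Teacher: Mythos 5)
Your proof is correct and follows essentially the same route as the paper's: strict convexity of $F$ gives existence and uniqueness of the minimizer, and the first-order condition $F'(t_*)=0$ recovers \eqref{argmin}, with endpoint minimizers otherwise. The only difference is that you spell out the sign analysis of $F'$ at the endpoints to verify $q_{1-t_*}=\min\{q_0,q_1\}$, which the paper merely asserts; this is a harmless (indeed welcome) elaboration.
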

\begin{proof}[\textbf{Proof.}]
As $F$ is strictly convex on $[0,1]$, it has a unique minimum in this interval. Differentiating yields $F'(t) = q_0-q_1 + (1+s/d) ( ut^{s/d}-v(1-t)^{s/d}  )  $. If there is a $ t_*\in (0,1) $ satisfying \eqref{argmin}, then $ F'(t_*) =0 $ and the minimum of $ F $ occurs  at the value $ t_* $. Otherwise, $ F $ is strictly monotone in $ [0,1] $, and the minimium must occur at an endpoint. In fact, the minimum is at $ t_*\in \{0,1\} $ such that $ q_{1-t_*} = \min  \{ q_0, q_1 \} $.
\end{proof}

\begin{lemma}\label{almost-constant}
	 Let $ \lambda  $ be a finite Radon measure on the set $ A \subset  \R^p $ and $ q: A\to \R  $ be measurable with respect to $ \lambda  $. Then for every $ \epsilon >0 $ and $ \lambda $-a.e. point $ x \in \R^p $ there exists a positive number $ R = R(x,\epsilon) $ such that 
	 \begin{equation}\label{almost-constant-eq}
	 \frac{\lambda\left[\{ z\in A\cap B(x,r) : |q(z)-q(x)| < \epsilon  \} \right] }{\lambda[B(x,r)]} > 1-\epsilon
	 \end{equation}
	 for all $ r < R $.
\end{lemma}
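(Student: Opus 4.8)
The plan is to recognize Lemma~\ref{almost-constant} as a statement of approximate continuity of $q$ with respect to $\lambda$ and to deduce it from the Lebesgue--Besicovitch density theorem. The crucial input is that, since $\lambda$ is a finite (hence locally finite) Borel measure on $\R^p$, the Besicovitch covering theorem is available, and consequently for any $\lambda$-measurable set $E\subseteq\R^p$ one has $\lambda\bigl(E\cap B(x,r)\bigr)/\lambda\bigl(B(x,r)\bigr)\to 1$ as $r\to 0^+$ for $\lambda$-a.e.\ $x\in E$. No doubling condition on $\lambda$ is needed, which is exactly why one should appeal to Besicovitch rather than to a general metric-space differentiation theorem. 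As a preliminary reduction I would replace $q$ by a Borel representative (altering it on a $\lambda$-null set, which affects neither side of \eqref{almost-constant-eq} away from a null set), so that the sublevel/superlevel sets below are genuinely Borel.

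Next I would introduce, for each pair of rationals $a<b$, the set $E_{a,b}:=\{z\in A:\ a<q(z)<b\}$, apply the density theorem to each of these countably many sets, and let $Z$ be the union of the corresponding exceptional $\lambda$-null sets together with $\R^p\setminus\supp\lambda$. Then $\lambda(Z)=0$, and for every $x\in A\setminus Z$ one has $\lambda\bigl(B(x,r)\bigr)>0$ for all $r>0$ and, for each rational pair $a<b$ with $x\in E_{a,b}$, density $1$ of $E_{a,b}$ at $x$.

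Finally, fixing such an $x$ and an $\epsilon>0$, I would use that $q(x)$ is finite to pick rationals with $q(x)-\epsilon<a<q(x)<b<q(x)+\epsilon$; then $x\in E_{a,b}$ and $E_{a,b}\subseteq\{z\in A:\ |q(z)-q(x)|<\epsilon\}$. Choosing $R=R(x,\epsilon)>0$ so that the density ratio of $E_{a,b}$ at $x$ exceeds $1-\epsilon$ for all $r<R$ yields, via the inclusion $E_{a,b}\cap B(x,r)\subseteq\{z\in A\cap B(x,r):\ |q(z)-q(x)|<\epsilon\}$, exactly \eqref{almost-constant-eq} upon dividing by $\lambda\bigl(B(x,r)\bigr)$.

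The only genuine obstacle --- and it is a matter of citing the right theorem rather than of real computation --- is securing the density theorem in the above form for a possibly non-doubling finite Radon measure on $\R^p$; this is standard (via the Besicovitch covering theorem, cf.\ Mattila or Evans--Gariepy), but must be invoked at the level of $\R^p$, not in an abstract metric space. Everything else is elementary: a countable exhaustion by rational intervals and one set inclusion. An alternative route, via Lusin's theorem applied to $q$ together with the density theorem for the closed sets of near-continuity, would also work, but the rational-interval argument keeps the measurability bookkeeping minimal.
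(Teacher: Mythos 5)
Your proposal is correct and follows essentially the same route as the paper: both apply the Lebesgue--Besicovitch density theorem (valid for Radon measures on $\R^p$ without any doubling hypothesis) to countably many level sets of $q$ and then conclude via the inclusion of such a set into $\{z\in A\cap B(x,r): |q(z)-q(x)|<\epsilon\}$. The only difference is bookkeeping: the paper partitions $A$ by the $\epsilon$-grid $\{(m-1)\epsilon\le q<m\epsilon\}$ for each fixed $\epsilon$, while you use rational-interval preimages $E_{a,b}$, which even yields a single exceptional null set working for all $\epsilon$ simultaneously.
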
 
\begin{proof}[\textbf{Proof.}]
	Consider the following partition of set $ A $:
	\begin{equation}\label{Lebesgue_partition}
	A= \bigcup_{m=1}^\infty \left\{(m-1)\epsilon \leq q(x) < m\epsilon \right\}=:\bigcup_{m=1}^\infty {A}_m.
	\end{equation} 
	
	By the Lebesgue-Besicovitch differentiation theorem (cf. \cite[1.7.1]{evans1991measure} or \cite[Corollary 2.14]{mattila1999geometry})  (or \textit{Lebesgue's density theorem} in this case), for $ \lambda $-a.e. point
	$ x\in A_m,\  m=1,2,\ldots, $
	\begin{equation}\label{density-theorem}
	\lim_{r\downarrow 0} \frac{\lambda[A_m\cap {B}(x,r)]}{\lambda[  {B}(x,r)]}=1. 
	\end{equation}
	Therefore, \eqref{density-theorem} holds for  $ \lambda $-a.e. point $ x\in A $. In particular, fix such a point $ x\in A_m $. Because $ (A_m\cap B(x,r)) \subset \{ z\in B(x,r) : |q(z)-q(x)| < \epsilon   \}  $, equation \eqref{density-theorem} implies \eqref{almost-constant-eq} for small enough $ R $.
\end{proof}
 
\begin{lemma}\label{lower-optimum}
	Let $ A \subset \R^p $ satisfy $ A = \cup_{m=1}^M A_m $, where $ A_m  $ are sets from the class  $ \cl $. Let also the function $ q $ be defined and lower semi-continuous on $ A $. Assume that a sequence of configurations $\omega_n  \subset A$, $n\in\n $, is such that
	\begin{enumerate}
		\item $ \omega_n  = \bigcup_{m=1}^M \omega^{m}_n $ and $ \omega^{m}_n\subset A_m $;
		\item $ \omega_n^{k} \cap \omega_n^l = \emptyset $ if $ k\neq l $;
		\item $\lim_{\n\ni n\to\infty}  \#\omega^{m}_n/n = \alpha_m,\   1\leq m \leq  M.$ \label{assumption}
	\end{enumerate}
	Then 
	\begin{equation}\label{cross-lower}
	\liminf_{\n\ni n\to\infty} \frac{E^q_{s,d}(\omega_n,n)}{\T(n)} \geq  \sum_{m=1}^M \alpha_m^{1+s/d}\frac{ \c}{\h(A_m)^{s/d}} + \sum_{m=1}^M \alpha_m \min_{x\in A_m} q(x).
	\end{equation}
\end{lemma}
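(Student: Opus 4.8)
The plan is to reduce the statement to the unweighted lower bound of Theorem~\ref{poppy seed} applied separately on each piece $A_m$, combined with a lower bound for the external-field term. Since the kernel $\kappa_s$ is nonnegative, for each fixed $n$ we have $E^0_{s,d}(\omega_n) \geq \sum_{m=1}^M E^0_{s,d}(\omega_n^m)$, because dropping all cross-pair interactions between distinct $A_k, A_l$ only decreases the energy; here we use assumption (2) that the $\omega_n^m$ are disjoint, so $\#\omega_n = \sum_m \#\omega_n^m$ and the decomposition of pairs is legitimate. Likewise the external field contributes exactly $\sum_m \sum_{x\in\omega_n^m} q(x)$, and on the piece $A_m$ this is at least $(\#\omega_n^m)\min_{x\in A_m} q(x)$; the minimum is attained and finite because $q$ is lower semi-continuous on the compact set $A_m$ (recall $A_m\in\cl$ is compact). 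Thus
\[
\frac{E^q_{s,d}(\omega_n,n)}{\T(n)} \;\geq\; \sum_{m=1}^M \frac{E^0_{s,d}(\omega_n^m)}{\T(n)} \;+\; \sum_{m=1}^M \frac{\#\omega_n^m}{n}\cdot\frac{\T(n)/n}{\T(n)/n}\cdot\frac{\T(n)}{\T(n)}\,\min_{x\in A_m}q(x),
\]
and the second sum tends to $\sum_m \alpha_m \min_{x\in A_m} q(x)$ by assumption (3).

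For the first sum, the key step is to show $\liminf_{\n\ni n\to\infty} E^0_{s,d}(\omega_n^m)/\T(n) \geq \alpha_m^{1+s/d}\,\c/\h(A_m)^{s/d}$ for each $m$. Write $N_m(n):=\#\omega_n^m$. If $\alpha_m>0$, then $N_m(n)\to\infty$, and by definition of $\mathcal E_s(A_m,N_m)$ we have $E^0_{s,d}(\omega_n^m)\geq \mathcal E_s(A_m,N_m(n))$; applying \eqref{unweighted_s>d} (resp. \eqref{unweighted_s=d}) gives $\mathcal E_s(A_m,N_m(n)) \sim \c\, N_m(n)^{1+s/d}/\h(A_m)^{s/d}$ (resp. the $s=d$ analogue), and then
\[
\frac{E^0_{s,d}(\omega_n^m)}{\T(n)} \;\geq\; \frac{\mathcal E_s(A_m,N_m(n))}{\T(N_m(n))}\cdot\frac{\T(N_m(n))}{\T(n)},
\]
where the first factor tends to $\c/\h(A_m)^{s/d}$ and the second to $\alpha_m^{1+s/d}$ by the scaling relation \eqref{T-limit} (taking $a_n=N_m(n)$). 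If $\alpha_m=0$, the term $\alpha_m^{1+s/d}\c/\h(A_m)^{s/d}$ vanishes, and since $E^0_{s,d}\geq 0$ there is nothing to prove for that piece. Summing the $\liminf$ bounds over $m$ (using that a sum of $\liminf$'s is at most the $\liminf$ of the sum) yields \eqref{cross-lower}.

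The main obstacle — really the only delicate point — is the case $s=d$: there one must check that each $A_m$ still satisfies the hypotheses of Theorem~\ref{poppy seed}, namely being a subset of a $d$-dimensional $C^1$-manifold, which is guaranteed by the assumption $A_m\in\cl$; and one must verify that $\T$ still has the multiplicative-scaling property \eqref{T-limit} when $\T(N)=N^2\log N$, i.e. that $(a_n)^2\log a_n/(n^2\log n)\to\alpha^2$ whenever $a_n/n\to\alpha>0$, which follows since $\log a_n/\log n\to 1$. A secondary point to handle carefully is that $\min_{x\in A_m}q(x)$ could be $+\infty$ a priori if $q\equiv+\infty$ on some $A_m$, but then that piece contributes $+\infty\cdot\alpha_m$ consistently on both sides (and if $\alpha_m=0$ we interpret the product as $0$, matching the fact that $\omega_n^m$ is eventually empty or its field contribution is negligible relative to $\T(n)$ — though to avoid this edge case one may simply invoke that $q$ is finite on a set of positive measure and that the relevant $A_m$ in applications have $\min q<\infty$). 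Modulo these bookkeeping matters, the argument is a routine concatenation of the poppy-seed asymptotics on the pieces.
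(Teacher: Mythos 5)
Your argument is correct and follows essentially the same route as the paper's proof: drop the cross-piece interactions using nonnegativity of the kernel, bound the field term below by $\#\omega_n^m\cdot\min_{A_m}q$ (attained by lower semi-continuity and compactness), and handle each within-piece interaction sum via $E^0_{s,d}(\omega_n^m)\geq\mathcal{E}_s(A_m,\#\omega_n^m)$ together with Theorem~\ref{poppy seed} and the scaling relation \eqref{T-limit}. The extra bookkeeping you supply (the $\alpha_m=0$ case, the $s=d$ scaling check, and superadditivity of $\liminf$) is implicit in the paper's one-display computation, so no further changes are needed.
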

\begin{proof}[\textbf{Proof.}]
	Observe that the minima on the right hand side of \eqref{cross-lower} are attained due to the lower semi-continuity of $ q $. For the left hand side of \eqref{cross-lower} there holds
	\begin{align*}
	\liminf_{\n\ni n\to\infty} \frac{E^q_{s,d}(\omega_n,n)}{\T(n)} =   & \ \liminf_{\n\ni n\to\infty} \frac{1}{\T (n)} 
	\left( \sum_{ \substack{x\neq y \\ x,y\in\omega_n} }  	|x-y|^{-s} + \frac{\T(n)}{n }\sum_{x\in\omega_n} q(x)
	\right) \nonumber
	\\ 
	\geq &\ \liminf_{\n\ni n\to\infty} \frac{1}{\T(n)} \sum_{m=1}^M  \sum_{ \substack{x\neq y \\ x,y\in\omega^{m}_n} }  	|x-y|^{-s} +  \liminf_{n\to\infty} \frac{1}{n} \sum_{m=1}^M \sum_{x\in\omega^{m}_n} q(x)   \nonumber
	\\ 
	\geq &\  \liminf_{\n\ni n\to\infty} \sum_{m=1}^M \frac{\T(\#\omega^{m}_n) }{\T(n)}  \frac{E^{0}_{s,d}(\omega^{m}_n) }{\T(\#\omega^{m}_n) } 	  +  \liminf_{n\to\infty}  \sum_{m=1}^M \frac{\#\omega^{m}_n}{n} \min_{x\in A_m} q(x)
	\\
	\geq &\  \sum_{m=1}^M \alpha_m^{1+s/d}  \frac{ \c}{\h(A_m)^{s/d}} + \sum_{m=1}^M \alpha_m \min_{x\in A_m} q(x), 
	\end{align*}
	where for the last inequality we used \eqref{T-limit} and Theorem \ref{poppy seed}.
\end{proof}

\begin{remark}
	Observe that the only assertion about $ \#\omega_n $ we make is \eqref{assumption}.
\end{remark}

\begin{corollary}\label{lower-opt-cor}
	Let the assumptions of Lemma \ref{lower-optimum} be satisfied and suppose $ q_m,\ 1\leq m \leq M $, are numbers such that the closure of  $Z_m:=  \{ x\in A_m : q(x)< q_m \} $ has $ \h$-measure zero. Then
	\begin{equation}\label{cross-lower-corollary}
	\liminf_{\n\ni n\to\infty} \frac{E^q_{s,d}(\omega_n,n)}{\T(n)} \geq  \sum_{m=1}^M \alpha_m^{1+s/d}\frac{ \c}{\h(A_m)^{s/d}} + \sum_{m=1}^M \alpha_m   q_m.
	\end{equation}
	\end{corollary}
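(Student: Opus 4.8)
The plan is to derive Corollary \ref{lower-opt-cor} from Lemma \ref{lower-optimum} by a perturbation argument on the external field, replacing $q$ with a modified field that is bounded below by $q_m$ on $A_m$ while changing the energy by a negligible amount. Since $\overline{Z_m}$ has $\h$-measure zero, the "bad" set where $q<q_m$ is $\h$-negligible, so intuitively moving the finitely many points of $\omega_n$ that happen to land in $Z_m$ should not affect the leading-order asymptotics. The cleanest way to make this rigorous is to compare with the field $\tilde q(x):=\max\{q(x), q_m\}$ on each $A_m$; then $\tilde q$ is still lower semi-continuous (maximum of l.s.c.\ and a constant), satisfies $\tilde q\ge q$, and $\min_{x\in A_m}\tilde q(x)\ge q_m$.

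First I would apply Lemma \ref{lower-optimum} verbatim with $q$ replaced by $\tilde q$ and the same decomposition $A=\cup A_m$, the same configurations $\omega_n=\cup\omega_n^m$, and the same density assumptions (all of which are unaffected by changing $q$). This yields
\[
\liminf_{\n\ni n\to\infty}\frac{E^{\tilde q}_{s,d}(\omega_n,n)}{\T(n)}\ \ge\ \sum_{m=1}^M\alpha_m^{1+s/d}\frac{\c}{\h(A_m)^{s/d}}+\sum_{m=1}^M\alpha_m\min_{x\in A_m}\tilde q(x)\ \ge\ \sum_{m=1}^M\alpha_m^{1+s/d}\frac{\c}{\h(A_m)^{s/d}}+\sum_{m=1}^M\alpha_m q_m,
\]
which is exactly the right-hand side of \eqref{cross-lower-corollary}. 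So it remains only to show that the left-hand side is not decreased when we pass from $\tilde q$ back to $q$, i.e.\ that
\[
\liminf_{\n\ni n\to\infty}\frac{E^q_{s,d}(\omega_n,n)}{\T(n)}\ \ge\ \liminf_{\n\ni n\to\infty}\frac{E^{\tilde q}_{s,d}(\omega_n,n)}{\T(n)}.
\]

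The difference of the two energies at scale $\T(n)$ is $\frac1n\sum_{x\in\omega_n}\bigl(q(x)-\tilde q(x)\bigr)=-\frac1n\sum_{m}\sum_{x\in\omega_n^m\cap Z_m}\bigl(\tilde q(x)-q(x)\bigr)$, since $q=\tilde q$ off $\cup_m Z_m$. This is nonpositive, so naively $E^q\le E^{\tilde q}$ and the inequality goes the wrong way; the point is that it is negligible. Here is where I expect the main obstacle, and how I would handle it: I need a lower bound on the correction term, equivalently an upper bound on $\frac1n\sum_{x\in\omega_n^m\cap Z_m}(\tilde q(x)-q(x))$. Note $q$ need not be bounded below, so I cannot simply bound the summand; instead I would invoke the energy terms to control both the number of points near $\overline{Z_m}$ and the negative mass of $q$ there. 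Concretely, since $\overline{Z_m}$ is compact (closed subset of compact $A$) with $\h(\overline{Z_m})=0$, and recalling that we are in the setting where the $\liminf$ we are bounding from below may as well be assumed finite (otherwise there is nothing to prove), we may assume the total energy $E^q_{s,d}(\omega_n,n)/\T(n)$ is bounded; this forces $q$ to be $\lambda$-integrable against the limiting measure in an appropriate sense and, via the interaction energy being positive, bounds the count $\#(\omega_n^m\cap U)$ for any neighborhood $U$ of $\overline{Z_m}$. For a slicker route, I would instead modify $q$ only on a small open neighborhood $U_\eta\supset\overline{Z_m}$ with $\h(U_\eta\cap A_m)<\eta$, setting $\tilde q_\eta:=\max\{q, q_m\}$ on $U_\eta$ and $\tilde q_\eta=q$ elsewhere; apply Lemma \ref{lower-optimum} with $\tilde q_\eta\ge q$ to get $\liminf E^q/\T\ge\liminf E^{\tilde q_\eta}/\T$ reversed — wait, that still has the sign issue.

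Let me therefore state the argument in the form I am most confident is correct and fits the paper's style. Since the paper will use this corollary only in the forward direction $\liminf E^q\ge(\text{RHS})$, and since $\tilde q\ge q$ gives $E^{\tilde q}_{s,d}(\omega_n,n)\ge E^q_{s,d}(\omega_n,n)$, what is actually needed is an \emph{upper} bound reversing this up to $o(\T(n))$. I would obtain it by a rescaling/discarding argument on the points in $Z_m$: let $\omega_n^{m,0}:=\omega_n^m\cap Z_m$ and $\omega_n^{m,1}:=\omega_n^m\setminus Z_m$. The interaction energy of $\omega_n^{m,1}$ alone already has $\liminf$ at scale $\T(\#\omega_n^{m,1})/\T(n)$ bounded below by $\alpha_m^{1+s/d}\c/\h(A_m)^{s/d}$ provided $\#\omega_n^{m,0}=o(n)$, which I would deduce from $\h(\overline{Z_m})=0$ together with the separation (Theorem \ref{separation}) or simply from a covering argument: one cannot pack more than $o(n)$ points with $N^{-1/d}$-separation into an $\h$-null set (cover $\overline{Z_m}$ by finitely many balls of total $\h$-measure $<\eta$; each ball of radius $r$ holds $O(r^d n)$ separated points, summing to $O(\eta n)$). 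Hence the points in $Z_m$ contribute a negligible fraction $\alpha_m^{(0)}=0$, so in \eqref{cross-lower} applied to the full decomposition refined by $\{Z_m, A_m\setminus Z_m\}$ the $Z_m$-blocks drop out, and on $A_m\setminus Z_m$ we have $\min q\ge q_m$, yielding \eqref{cross-lower-corollary}. The hard part, as flagged, is ruling out that an unbounded-below $q$ on the $\h$-null set $Z_m$ makes $\sum_{x\in\omega_n^{m,0}}q(x)$ comparable to $\T(n)$; I would close this by noting that if that sum were $\le -c\,\T(n)$ for some $c>0$ along a subsequence, then because the interaction energy is nonnegative one could decrease $E^q(\hat\omega_n,n)$ without bound by adding mass there, contradicting the finiteness established in Remark \ref{finite_limits} — but since the present lemma concerns \emph{arbitrary} configurations, not minimizers, I would instead simply add the standing hypothesis (implicit in all applications) that the $\liminf$ on the left of \eqref{cross-lower-corollary} is finite, under which the negligibility of $\sum_{x\in\omega_n^{m,0}}q(x)$ follows from the covering bound $\#\omega_n^{m,0}=o(n)$ combined with lower semi-continuity giving a lower bound $q\ge -K$ on a fixed neighborhood (since $q$ attains its min on compacts, $q$ is bounded below on all of $A$ if $A$ is compact, which it is here as $A=\cup A_m$ with each $A_m\in\cl$ compact). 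This last observation in fact resolves the issue outright: $q$ is lower semi-continuous on the compact set $A$, hence bounded below, so $\frac1n|\sum_{x\in\omega_n^{m,0}}q(x)|\le \frac{\|q\|_-}{n}\#\omega_n^{m,0}\to 0$, and the proof reduces to the covering estimate $\#\omega_n^{m,0}=o(n)$ plus a direct application of Lemma \ref{lower-optimum}.
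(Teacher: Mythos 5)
Your final reduction is the right one and structurally matches the paper's: split the field sum over $\omega_n^m\cap Z_m$ and $\omega_n^m\setminus Z_m$, use $q\ge q_m$ off $Z_m$, note that $q$ is bounded below on the compact set $A$ by lower semi-continuity, and conclude once $\#(\omega_n^m\cap Z_m)=o(n)$. (Your first route via $\tilde q=\max\{q,q_m\}$ has exactly the sign problem you spotted and is rightly abandoned; assuming the liminf finite is also harmless, since the right-hand side of \eqref{cross-lower-corollary} is finite.) The genuine gap is in how you establish $\#(\omega_n^m\cap Z_m)=o(n)$. You deduce it from the separation bound of Theorem \ref{separation} plus a packing count, but the corollary, like Lemma \ref{lower-optimum}, concerns \emph{arbitrary} configurations subject only to the density assumptions $\#\omega_n^m/n\to\alpha_m$; these are not minimizers, so no separation estimate is available, and nothing a priori prevents a positive fraction of $\omega_n^m$ from sitting inside $Z_m$. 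Moreover, even granting separation, the count ``$O(r^d n)$ points per ball of radius $r$'' is a packing bound that requires two-sided Ahlfors regularity of $A_m$, which is not assumed here ($A_m$ is merely compact, $d$-rectifiable, of positive measure).

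The repair uses a tool you already have and is the mechanism the paper relies on. Pass to a subsequence $\n'\subset\n$ realizing the liminf and assume the limit is finite. If $\limsup_{\n'\ni n\to\infty}\#(\omega_n^m\cap\overline{Z_m})/n=\beta>0$ for some $m$, then, since $\overline{Z_m}$ is a compact $d$-rectifiable set with $\h(\overline{Z_m})=0$, Theorem \ref{poppy seed} (with the convention $1/0=+\infty$) combined with \eqref{T-limit} gives $E^{0}_{s,d}(\omega_n\cap\overline{Z_m})/\T(n)\to+\infty$ along a further subsequence; since the field term is bounded below by $-K\,\T(n)$ (with $-K$ a lower bound for $q$ on $A$), this contradicts finiteness of the limit. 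Hence $\#(\omega_n^m\cap Z_m)/n\to 0$, the field contribution from $Z_m$ is $O\bigl(K\,\#(\omega_n^m\cap Z_m)/n\bigr)=o(1)$, and your computation closes. With that substitution your argument is correct and is essentially the proof the paper intends (the paper's own write-up is equally terse on precisely this point).
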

\begin{proof}[\textbf{Proof.}]
    Let $ \n'\subset \n $ be such that 
    \[  \liminf_{\n\ni n\to\infty} \frac{E^q_{s,d}(\omega_n,n)}{\T(n)} =  \lim_{\n'\ni n\to\infty} \frac{E^q_{s,d}(\omega_n,n)}{\T(n)}.  \]
    Then
    \[ \begin{aligned}
    \lim_{\n'\ni n\to\infty} \frac{E^q_{s,d}(\omega_n,n)}{\T(n)}\geq & 
     \sum_{m=1}^M \alpha_m^{1+s/d}  \frac{ \c}{\h(A_m)^{s/d}} + \lim_{\n'\ni n\to\infty} \frac{1}{n} \sum_{m=1}^M \sum_{x\in\omega^{m}_n} q(x) \\ 
    \geq  &  \sum_{m=1}^M \alpha_m^{1+s/d}  \frac{ \c}{\h(A_m)^{s/d}} +   \lim_{\n'\ni n\to\infty} \frac{1}{n} \left( \sum_{m=1}^{M} \frac{\#(\omega_n^m\cap (A\setminus Z_m) )}{n}  \right)\\
    = & \sum_{m=1}^M \alpha_m^{1+s/d}\frac{ \c}{\h(A_m)^{s/d}} + \sum_{m=1}^M \alpha_m   q_m.
    \end{aligned}
     \]
\end{proof}

\begin{lemma}\label{abs-continuity} Let the set $ A $ be such that the assumptions of Theorem \ref{thm_weak_lim} hold.
	Assume that a sequence of $ N $-point configurations $ \{\omega_N\}_{N \geq 2}   $ in $ A $ satisfies
	\begin{equation}\label{asymp-finite}
		\limsup_{\substack{N\to \infty \\ N\in\mathcal{N}  }} \frac{ \e(\omega_N) }{\T(N)} < + \infty
	\end{equation}
	and
	\begin{equation}
	\frac{1}{N}\sum_{x\in {\omega}_N} \delta_{ x}\stackrel{*}{\longrightarrow}  \d\mu, \qquad \n\ni N \to \infty, 
	\end{equation}
	for some Borel probability measure $ \mu $ on $ A $. Then $ \mu $ is $ \H $-absolutely continuous.
\end{lemma}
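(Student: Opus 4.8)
The plan is to argue by contradiction, producing from a hypothetical singular part of $ \mu $ a sub-configuration of $ \omega_N $ that carries more Riesz energy than $ \T(N) $ can afford. Suppose $ \mu $ is not $ \H $-absolutely continuous; then there is a Borel set $ E\subset A $ with $ \mathcal{H}_d(E)=0 $ and $ \mu(E)>0 $. Since $ \mu $ is a Borel probability measure on the compact metric space $ A $, it is inner regular, so we may fix a compact set $ K\subset E $ with $ \beta:=\mu(K)>0 $; note $ \mathcal{H}_d(K)=0 $. For $ \delta>0 $ put $ U_\delta:=\{x\in\R^p:\dist(x,K)<\delta\} $, an open neighborhood of $ K $, and $ K_\delta:=\overline{U_\delta}\cap A $. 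Each $ K_\delta $ is compact and, being a subset of the $ d $-rectifiable set $ A $, is $ d $-rectifiable (and contained in a $ d $-dimensional $ C^1 $-manifold when $ s=d $), so Theorem \ref{poppy seed} applies to it --- including the degenerate case $ \mathcal{H}_d(K_\delta)=0 $ with the convention $ 1/0=+\infty $. Because $ K $ is closed, $ \bigcap_{\delta>0}K_\delta=K $, and since $ \mathcal{H}_d(A)<\infty $, continuity of $ \mathcal{H}_d $ from above gives $ \mathcal{H}_d(K_\delta)\downarrow\mathcal{H}_d(K)=0 $ as $ \delta\downarrow0 $.

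Next I fix $ \delta>0 $ and set $ m_N:=\#(\omega_N\cap U_\delta) $. Since $ U_\delta $ is open, the portmanteau theorem for weak$ ^* $ convergence gives $ \liminf_{\n\ni N\to\infty}m_N/N\geq\mu(U_\delta)\geq\beta $, so $ m_N\geq(\beta/2)N $ for all large $ N\in\n $; in particular $ m_N\to\infty $. As $ q $ is lower semi-continuous on the compact set $ A $, it is bounded below, say $ q\geq\underline q>-\infty $. Discarding points can only decrease the (nonnegative) Riesz pair sum, and $ \omega_N\cap U_\delta $ is an $ m_N $-point subset of $ K_\delta $, so $ E_s(\omega_N)\geq E_s(\omega_N\cap U_\delta)\geq\mathcal{E}_s(K_\delta,m_N) $. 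Combining this with $ \frac{\T(N)}{N}\sum_{x\in\omega_N}q(x)\geq\T(N)\,\underline q $ yields, for all large $ N\in\n $,
\[ \frac{\e(\omega_N)}{\T(N)}\;\geq\;\frac{\mathcal{E}_s(K_\delta,m_N)}{\T(m_N)}\cdot\frac{\T(m_N)}{\T(N)}+\underline q. \]
By Theorem \ref{poppy seed}, $ \mathcal{E}_s(K_\delta,m_N)/\T(m_N)\to\c\,\mathcal{H}_d(K_\delta)^{-s/d}\in(0,+\infty] $ as $ N\to\infty $ along $ \n $; and since $ m_N\geq(\beta/2)N $ with $ \T $ nondecreasing, \eqref{T-limit} gives $ \liminf_{\n\ni N\to\infty}\T(m_N)/\T(N)\geq(\beta/2)^{1+s/d} $. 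Taking $ \liminf $ in the displayed inequality therefore produces
\[ \limsup_{\n\ni N\to\infty}\frac{\e(\omega_N)}{\T(N)}\;\geq\;\frac{\c}{\mathcal{H}_d(K_\delta)^{s/d}}\Bigl(\frac{\beta}{2}\Bigr)^{1+s/d}+\underline q . \]

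Finally I let $ \delta\downarrow0 $. The left-hand side is a fixed finite number by \eqref{asymp-finite}, while $ \mathcal{H}_d(K_\delta)\downarrow0 $ drives the right-hand side to $ +\infty $ --- a contradiction. Hence $ \mu $ is $ \H $-absolutely continuous. The conceptual core, and the only point requiring care, is the order of the two limits: for fixed $ \delta $ one must first send $ N\to\infty $ to apply Theorem \ref{poppy seed} on $ K_\delta $ (one cannot work directly with the $ \mathcal{H}_d $-null set $ K $, since the points of $ \omega_N $ need not lie on $ K $), and only afterwards send $ \delta\downarrow0 $ to make the resulting bound explode; the remaining ingredients --- inner regularity of $ \mu $, the portmanteau theorem, monotonicity of $ \T $ via \eqref{T-limit}, and Theorem \ref{poppy seed} --- are standard.
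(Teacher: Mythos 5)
Your proof is correct and follows essentially the same route as the paper's: argue by contradiction, take a closed neighborhood of the $\h$-null set that carries a positive fraction of the points, apply the unweighted Theorem~\ref{poppy seed} on that neighborhood together with the scaling limit \eqref{T-limit}, and then shrink the neighborhood to blow up the energy. The only cosmetic differences are that you pass to a compact $K\subset E$ and invoke the portmanteau inequality on the open neighborhood where the paper reduces to $E$ closed and uses Urysohn's lemma, and that you make the lower bound on the external-field term explicit.
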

\begin{proof}[\textbf{Proof.}]
	Indeed, otherwise let $ E \subset A $ be a Borel set such that $ \h(E) = 0 $ and $ \mu(E)  >0 $. Since $ \mu $ is inner regular as a Borel measure on a Radon space, \cite[434K(b)]{fremlin2000measure}, without loss of generality $ E $ is closed. For an $ \epsilon >0 $ pick $ r >0 $ such that $ E_r := \{ x \in A:  \dist(x,E)\leq r \} $ satisfies $ \h(E_r) < \epsilon  $; observe that $ E_r $ is closed. By the definition of weak$ ^* $ convergence and Urysohn's lemma, $ \lim_{\mathcal{N}\ni N \to \infty } \frac{1}{N} \# \{ x\in{\omega}_N : x\in E_r \} \geq \mu(E)  $ (consider a positive continuous function equal 1 on $ E $ and supported on $ E_r $). Then according to Theorem \ref{poppy seed} and the limit \eqref{T-limit}, 
	\[
	\begin{aligned}
	\liminf_{\substack{N\to \infty \\ N\in\mathcal{N}  }} \frac{E^0_{s,d} ( {\omega}_N \cap E_r,N) }{\T(N)} & = \liminf_{\substack{N\to \infty \\
			N\in\mathcal{N}  }} \frac{E^0_{s,d} ({\omega}_N \cap E_r,N) }{\T(\# ( {\omega}_N \cap E_r) ) } \frac{ \T(\# ({\omega}_N \cap E_r))}{\T(N)} 
	\\ & \geq \frac{\c }{\h(E_r)^{s/d}} \mu(E)^{1+s/d} \geq \frac{\c  }{\epsilon^{s/d}}\mu(E)^{1+s/d}.
	\end{aligned}
	\]
	As $ \epsilon $  was arbitrary, this contradicts \eqref{asymp-finite}.
	Thus  $ \mu $ must be $ \H $-absolutely continuous. 
\end{proof}
\begin{lemma}\label{local-optimal}
	Let the assumptions of Theorem \ref{thm_weak_lim} be satisfied. Let also the sequence of $ N$-point configurations $ \{ {\omega}_N\}_{N\in\mathcal{N}} $ be such that
	\begin{equation}\label{lim-inf}
	\slim  \frac{ \e( {\omega}_N) }{\T(N)} = \g(A)
	\end{equation}
	and
	\begin{equation}\label{weak-convergence}
	\frac{1}{N}\sum_{x\in {\omega}_N} \delta_{ x}\stackrel{*}{\longrightarrow}  \d {\mu}, \qquad \n \ni N \to \infty.
	\end{equation} 
	Assume that $ \{B_m\}_{m=1}^M, \ M\geq 1, $ is a collection of closed pairwise disjoint balls such that $\H(B_m)>0,\  \H(\partial B_m) = 0 $ and $ 
	{\H\left(\{ z\in B_m :  q(z)  \leq q_m  \}\right) }  \geq (1-\delta)\H(B_m)$,   $ m = 1,\ldots,M,$ for some positive $ \delta <1 $.
	
	 Then 
	 \begin{equation}\label{omega-0}
	\slim  \frac{\e ( {\omega}_N\cap\left(\cup_m B_m\right) ,N)}{\T(N)}  
    \leq  \min
    \left\{ \sum_{m=1}^M 
    \frac{ \c\, \alpha_m^{1+s/d} }{((1-\delta) \H(B_m))^{s/d}} +   q_m \alpha_m   
	 \right\},
	 \end{equation}
     where the minimum is taken over $\alpha_m\geq0$ such that $\sum\alpha_m =\sum\mu(B_m)$.
     
	 In particular, there exists a sequence $ \{\omega_N^0\}_{N\in\n} $ for which \eqref{omega-0} is an equality with $ \omega_N^0 $ in place of $ \omega_N $.
\end{lemma}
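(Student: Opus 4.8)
The plan is to deduce \eqref{omega-0} from the asymptotic global optimality \eqref{lim-inf} by a competitor (``swapping'') argument, and to read off the ``in particular'' clause from the competitor itself. From $\omega_N$ I build $\omega_N^{0}$ by deleting the points lying in $\cup_m B_m$ and inserting, in their place, a spread-out near-optimal configuration on slightly shrunken compact subsets of the $B_m$; since $\#\omega_N^{0}=\#\omega_N=N$ we have $\e(\omega_N^{0})\ge\E(A,N)$, and comparing $\e(\omega_N^{0})$ with $\e(\omega_N)$ term by term forces $\e(\omega_N\cap(\cup_m B_m),N)$ to be asymptotically no larger than the energy of the inserted piece, which I arrange to tend to $\min\Phi$, where $\Phi(\alpha):=\sum_m\big(\c\,\alpha_m^{1+s/d}/((1-\delta)\H(B_m))^{s/d}+q_m\alpha_m\big)$ and the minimum runs over the compact simplex $\{\alpha_m\ge0:\sum_m\alpha_m=\sum_m\mu(B_m)\}$.

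In more detail: since $\mu\ll\H$ by Lemma~\ref{abs-continuity} and $\H(\partial B_m)=0$, one has $\mu(\partial B_m)=0$, so \eqref{weak-convergence} gives $\#(\omega_N\cap B_m)/N\to\beta_m:=\mu(B_m)$ and $K_N:=\#(\omega_N\cap(\cup_m B_m))$ satisfies $K_N/N\to\sum_m\beta_m$. Let $(\alpha_m^{*})$ minimize $\Phi$ over the simplex. The set $S_m:=\{z\in B_m:q(z)\le q_m\}$ is closed (as $q$ is lower semi-continuous), $d$-rectifiable (being a subset of $A$), and satisfies $\H(S_m)\ge(1-\delta)\H(B_m)$; because $\H(\partial B_m)=0$ and $\h$ is atomless, I can choose a compact $\tilde B_m\in\cl$ with $\tilde B_m\subset S_m$, $\dist(\tilde B_m,\partial B_m)>0$, and $\H(\tilde B_m)$ equal to $(1-\delta)\H(B_m)$ (retract $S_m$ into the interior of $B_m$, then pass to a subset of the prescribed measure; in the non-generic case where the hypothesis on $B_m$ is tight one instead takes $\H(\tilde B_m)$ arbitrarily close to $(1-\delta)\H(B_m)$ from below and lets this slack tend to $0$ at the end, with a diagonal choice). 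Picking integers $n_m=n_m(N)$ with $\sum_m n_m=K_N$ and $n_m/N\to\alpha_m^{*}$ (possible since $K_N/N\to\sum_m\beta_m=\sum_m\alpha_m^{*}$), and letting $\tilde\omega_N^m\subset\tilde B_m$ be an asymptotically $s$-energy minimizing $n_m$-point configuration, I set $\tilde\omega_N:=\cup_m\tilde\omega_N^m$ and $\omega_N^{0}:=(\omega_N\setminus\cup_m B_m)\cup\tilde\omega_N$, so that $\#\omega_N^{0}=N$ and $\omega_N^{0}\cap(\cup_m B_m)=\tilde\omega_N$.

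Then comes the energy bookkeeping. As the $B_m$ are pairwise disjoint and $\dist(\tilde B_m,\partial B_m)>0$, every interaction between points of distinct $\tilde B_m$, and between $\tilde\omega_N$ and $\omega_N\setminus\cup_m B_m$, is bounded by a constant times $N^2=o(\T(N))$; hence $\e(\tilde\omega_N,N)=\sum_m E^0_{s,d}(\tilde\omega_N^m)+\tfrac{\T(N)}{N}\sum_m\sum_{x\in\tilde\omega_N^m}q(x)+o(\T(N))$. By \eqref{T-limit} and Theorem~\ref{poppy seed}, $E^0_{s,d}(\tilde\omega_N^m)/\T(N)=\big(\T(n_m)/\T(N)\big)\big(E^0_{s,d}(\tilde\omega_N^m)/\T(n_m)\big)\to(\alpha_m^{*})^{1+s/d}\,\c/\H(\tilde B_m)^{s/d}$, while $q\le q_m$ on $\tilde B_m$ gives $\tfrac1N\sum_{x\in\tilde\omega_N^m}q(x)\le\tfrac{n_m}{N}q_m\to\alpha_m^{*}q_m$, so (after the $\eta\downarrow0$ limit, using that $\min\Phi$ is continuous in the denominators) $\limsup_{N\to\infty}\e(\tilde\omega_N,N)/\T(N)\le\min\Phi$. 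To transfer this, split $\e(\omega_N)$ and $\e(\omega_N^{0})$ over $\cup_m B_m$ and its complement: writing $C_N,\tilde C_N\ge0$ for the respective inside--outside cross-energies, $\e(\omega_N)=\e(\omega_N\setminus\cup_m B_m,N)+\e(\omega_N\cap(\cup_m B_m),N)+2C_N$ and $\e(\omega_N^{0})=\e(\omega_N\setminus\cup_m B_m,N)+\e(\tilde\omega_N,N)+2\tilde C_N$, so using $\e(\omega_N^{0})\ge\E(A,N)$ and $C_N\ge0$,
$$\e\big(\omega_N\cap(\cup_m B_m),\,N\big)\ \le\ \e(\omega_N)-\E(A,N)+\e(\tilde\omega_N,N)+2\tilde C_N.$$
Dividing by $\T(N)$ and taking $\limsup$ over $N\in\n$: $\limsup_{N\in\n}\big(\e(\omega_N)-\E(A,N)\big)/\T(N)=\g(A)-\liminf_{N\in\n}\E(A,N)/\T(N)\le\g(A)-\g(A)=0$ by \eqref{lim-inf} and the definition of $\g(A)$; $\limsup_{N\to\infty}\e(\tilde\omega_N,N)/\T(N)\le\min\Phi$; and $\tilde C_N=O(N^2)=o(\T(N))$. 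Hence $\limsup_{N\in\n}\e(\omega_N\cap(\cup_m B_m),N)/\T(N)\le\min\Phi$, which is \eqref{omega-0}; the sequence realizing it is $\{\omega_N^{0}\}$, equivalently $\{\tilde\omega_N\}$ (the matching lower bound $\liminf\ge\min\Phi$ comes from Corollary~\ref{lower-opt-cor} applied to $\tilde\omega_N=\cup_m\tilde\omega_N^m$ with the partition $\{\overline{\tilde B_m}\}$ and the exponents $\alpha_m^{*}$; in every situation where this lemma is invoked the $\tilde B_m$ can be chosen inside a level set on which $q$ equals $q_m$ up to an arbitrarily small error, so the two bounds coincide in the limit).

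The step I expect to be the main obstacle is controlling the cross-energy $\tilde C_N$ between the inserted configuration $\tilde\omega_N$ and the retained outside points $\omega_N\setminus\cup_m B_m$. Since $\{\omega_N\}$ is only asymptotically, not exactly, energy minimizing, it carries no separation estimate, and the naive bound on $\tilde C_N$ is itself of order $\T(N)$ --- the same order as the quantities being compared. The device that resolves this is to insert $\tilde\omega_N$ not on the full $B_m$ but on compact subsets $\tilde B_m$ with $\dist(\tilde B_m,\partial B_m)>0$: this is harmless for the leading-order count, because $\H(\partial B_m)=0$ lets $\H(\tilde B_m)$ be taken equal to (or arbitrarily close to) $(1-\delta)\H(B_m)$, yet it makes every $\tilde\omega_N$-to-outside interaction at most a fixed constant, so $\tilde C_N=O(N^2)$. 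A minor secondary point is the exact matching $\sum_m n_m=K_N$ while keeping $n_m/N\to\alpha_m^{*}$, which is available because $K_N/N\to\sum_m\beta_m=\sum_m\alpha_m^{*}$.
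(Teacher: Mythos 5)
Your proposal is correct and follows essentially the same route as the paper's proof: a swapping/competitor argument in which the points of $\omega_N$ inside $\cup_m B_m$ are replaced by unweighted near-minimizers on closed subsets of $\{z\in B_m: q(z)\le q_m\}$ kept at positive distance from $\partial B_m$ (so cross-terms are $O(N^2)=o(\T(N))$), after which the global asymptotic minimality \eqref{lim-inf} transfers the bound to $\omega_N\cap(\cup_m B_m)$, with the counts $n_m/N\to\alpha_m$ controlled via $\mu(\partial B_m)=0$ and Theorem \ref{poppy seed} plus \eqref{T-limit} supplying the limiting energies. The paper likewise shrinks to closed subsets of measure $>(1-\delta-\epsilon)\H(B_m)$ and sends $\epsilon\to0$ before minimizing over the $\alpha_m$, so the two arguments coincide in all essentials.
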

\begin{proof}[\textbf{Proof.}] Fix an $ \epsilon >0 $ satisfying $ \epsilon < 1-\delta $. Consider the set $ \{ z\in B_m :  q(z)  \leq q_m  \}, \ m=1,\ldots,M $. By the inner regularity of measure $ \H $, it has a closed subset $ B'_m $ contained in a ball concentric with $ B_i $ of smaller radius, for which $ \H(B'_m) > (1-\delta-\epsilon)\H(B_m) $. Let a sequence of $ N $-point configurations $ \{\omega_N^0\}_{N\in\mathcal{N} }  $ in $ A $ be such that $ \omega_N^0 \cap (A \setminus \cup_m B_m ) =  \omega_N \cap (A \setminus \cup_m B_m ) $, and such that for $ 1\leq m \leq M $, the collection  $  {\omega}_N^0 \cap B_m $ is a minimizer of the $ (s,d,0) $-energy in $ B'_m $ (in particular, is contained in it). 
	
    Equation \eqref{lim-inf} and Lemma \ref{abs-continuity} imply that $ \mu(\partial B_m)=0,\ 1\leq m\leq M $. Hence the weak$ ^* $ convergence in \eqref{weak-convergence} implies $ \lim \#\{ x\in \omega_N : x\in B_m  \}/N \to \mu(B_m) $ when $ \n \ni N \to \infty $, \cite[Theorem 2.1]{billingsley1999convergence}.
    
	We will further assume that the following limits exist $ \alpha_m := \lim_{\mathcal{N} \ni N\to\infty} \#(\omega_N^0 \cap B_m) / N  $, $ 1\leq m \leq M $. The assumptions on $ \#(\omega_N^0 \cap B_m) $ mean that $ \sum_m \alpha_m = \sum_m \mu(B_m) $. Finally, we observe that by the construction of the sets $ B_m' $, there exists a positive $ r $ such that $ \dist( \cup_m B_m', A\setminus \cup_m B_m ) \geq r $.
	Recall that
	\begin{equation}\label{cross-terms-eq}
	\begin{aligned}
	 \e( {\omega}_N^0,N) =   \e & \left( {\omega}_N^0  \cap (\cup_m B_m) ,N \right) 
	 +  \e\left( {\omega}_N^0 \cap (A\setminus\cup_m B_m) ,N\right) \\
	   & 
	 +  \sum_{\substack{x,y\in \omega_N^0,\\ x\in \cup_m B_m,\\ y\in A\setminus \cup_m B_m }} |x-y|^{-s}.
	\end{aligned}
	\end{equation}
	Because $  \omega_N^0 \cap \cup_m B_m = \omega_N^0 \cap \cup_m B'_m  $ and because of the lower bound $ r $ for the distance between  $ \cup_m B'_m $ and $ A\setminus \cup_m B_m $, every term in the last sum is at most $ r^{-s} $.
	
	Using the previous equation and the definition of $ \g(A) $, we have:
	\begin{equation}
	\begin{aligned}
	0  \leq & \slim   \frac{ \e( {\omega}_N^0,N) }{\T(N)} - \g(A) = \slim  \left(\frac{ \e( {\omega}_N^0) }{\T(N)} -  \frac{ \e( {\omega}_N) }{\T(N)}\right)  \\
	\leq & \slim  \frac{\e ( {\omega}_N^0 \cap\left(\cup_m B_m\right) ,N)}{\T(N)}    
	- \slim  \frac{\e ( {\omega}_N\cap\left(\cup_m B_m\right) ,N)} {\T(N)}\\
	& + \slim \frac{N^2}{\T(N)} r^{-s}.        
	\end{aligned}
	\end{equation}
	Since $ \slim {N^2}r^{-s}/{\T(N)} =0 $, there holds
	\begin{equation}\label{local-intermediate}
	\slim  \frac{\e ( {\omega}_N\cap\left(\cup_m B_m\right) ,N)} {\T(N)} \leq 
	\slim  \frac{\e ( {\omega}_N^0 \cap\left(\cup_m B_m\right) ,N)}{\T(N)}.    
	\end{equation}
	From equation \eqref{lim-inf} and Lemma \ref{abs-continuity} follows that $ \mu(\partial B_m)=0,\ 1\leq m\leq M $. Hence the weak$ ^* $ convergence in \eqref{weak-convergence} implies $ \lim \#\{ x\in \omega_N : x\in B_m  \}/N \to \mu(B_m) $ when $ \n \ni N \to \infty $, \cite[Theorem 2.1]{billingsley1999convergence}.
	The construction of the sequence $ \{\omega_N^0\}_{N\in\n}  $ and the limit \eqref{T-limit} therefore imply
	\begin{equation}\label{zero-bound}
	\slim  \frac{\e ( {\omega}_N^0 \cap\left(\cup_m B_m\right) ,N)}{\T(N)} \leq \sum_{m=1}^M \left( \frac{ \c\, \alpha_m^{1+s/d} }{((1-\delta-\epsilon) \H(B_m))^{s/d}} +   q_m \alpha_m   \right).
	\end{equation}
	We have so far only imposed the conditions that $\alpha_1,\ldots, \alpha_M $ are nonnegative and sum to  $\sum_m \mu(B_m) $. Taking $ \epsilon\to0+ $ and minimizing over such $ \alpha_m $ in \eqref{zero-bound} gives \eqref{omega-0}.
\end{proof}

We first prove  Theorem \ref{thm_weak_lim} for the case that $q$ is a suitable simple function. The general case then follows by approximating an arbitrary lower semi-continuous $ q $ with such functions.

\begin{lemma}\label{scalable}
Let $ A \subset \R^p $ be a set from $ \cl $, and $ B_m, \,  1\leq m\leq M $,  be a collection of pairwise disjoint closed balls such that $\H(B_m)>0$ and $\H(A\cap\partial B_m)=0, \  1\leq m\leq M $. Assume also $ q $ is a lower semi-continuous function and for  $ D := \overline{A\setminus \cup_m B_m} $,
\begin{equation}
q(x) = \begin{cases}
q_0, & \H\text{-a.e. }x\in D,\\
q_m, & \H\text{-a.e. }x\in B_m, \quad 1\leq m \leq M,
\end{cases}
\end{equation}
for positive $ q_m,\ 0\leq m\leq M $.

Then equation \eqref{S_lim} holds for the set  $ A $ and function $ q $. 
\end{lemma}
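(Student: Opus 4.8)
The plan is to reduce \eqref{S_lim} to a minimization over a simplex. Set $A_0:=D$ and $A_m:=A\cap B_m$ for $1\le m\le M$; recalling that $\H(B_m)$ denotes $\h(A\cap B_m)$, each $A_m$ with $m\ge1$ belongs to $\cl$, and I include $A_0$ when $\h(D)>0$, otherwise dropping the index $m=0$ (then $D$ carries no $\H$-mass and $q_0$ is irrelevant). Because $q\equiv q_m$ holds $\H$-a.e.\ on $A_m$, the defining relation \eqref{L} becomes $\sum_m\h(A_m)\bigl((L_1-q_m)/(\c(1+s/d))\bigr)_+^{d/s}=1$. First I would prove
\[ \lim_{N\to\infty}\frac{\E(A,N)}{\T(N)}=G:=\min\Bigl\{\sum_m\beta_m^{1+s/d}\frac{\c}{\h(A_m)^{s/d}}+\sum_m\beta_m q_m\ :\ \beta_m\ge0,\ \textstyle\sum_m\beta_m=1\Bigr\}, \]
and then check, via a Lagrange multiplier computation, that $G=S(q,A)$.

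For the upper bound I would fix $\epsilon>0$, shrink each ball to a concentric closed ball $\widetilde B_m\subset\mathring B_m$, and put $B_m':=\{x\in A\cap\widetilde B_m:q(x)\le q_m\}$, together with $B_0':=\{x\in D:q(x)\le q_0\}$. Since $\{q\le q_m\}$ is closed by lower semicontinuity of $q$, while $q\equiv q_m$ a.e.\ on $A_m$ and $\H(A\cap\partial B_m)=0$, for $\widetilde B_m$ sufficiently close to $B_m$ one gets $B_m'\in\cl$ with $\h(B_m')\ge\h(A_m)-\epsilon$, the sets $B_0',\dots,B_M'$ pairwise at distance at least some $r(\epsilon)>0$, and $q\le q_m$ on $B_m'$. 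Let $\beta^*$ attain the minimum in $G$ and form $\omega_N=\bigcup_m\omega_N^m$, where $\omega_N^m$ is a near-optimal $N_m$-point configuration on $B_m'$ for the unweighted Riesz $s$-energy, with $\sum_m N_m=N$ and $N_m/N\to\beta_m^*$. By Theorem \ref{poppy seed} the within-piece energies are at most $(1+o(1))\,\c N_m^{1+s/d}/(\h(A_m)-\epsilon)^{s/d}$, the (at most $N^2$) cross-terms are each $\le r(\epsilon)^{-s}$ and hence together $o(\T(N))$ whether $s=d$ or $s>d$, and the external-field term is at most $\frac{\T(N)}{N}\sum_m N_m q_m$; dividing by $\T(N)$, letting $N\to\infty$ and then $\epsilon\to0$, I obtain $\limsup_N\E(A,N)/\T(N)\le G$.

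For the lower bound I would take minimizers $\hat\omega_N$, fix a partition $\hat\omega_N=\bigsqcup_m(\hat\omega_N\cap A_m)$, and pass to a subsequence $\mathcal N\subset\N$ along which $\E(A,N)/\T(N)$ converges to its limit inferior and $\#(\hat\omega_N\cap A_m)/N\to\alpha_m$ for every $m$; then $\alpha_m\ge0$ and $\sum_m\alpha_m=1$. For fixed $\epsilon>0$ the set $Z_m^\epsilon:=\{x\in A_m:q(x)<q_m-\epsilon\}$ has closure contained in the closed, $\H$-null set $\{q\le q_m-\epsilon\}\cap A_m$ (again since $q\equiv q_m$ a.e.\ on $A_m$), so Corollary \ref{lower-opt-cor} applies along $\mathcal N$ with the numbers $q_m-\epsilon$ in place of $q_m$ and gives
\[ \liminf_{N\to\infty}\frac{\E(A,N)}{\T(N)}\ge\sum_m\alpha_m^{1+s/d}\frac{\c}{\h(A_m)^{s/d}}+\sum_m\alpha_m(q_m-\epsilon)\ge G-\epsilon; \]
letting $\epsilon\to0$ and combining with the upper bound gives $\lim_N\E(A,N)/\T(N)=G$.

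Finally I would identify $G$ with $S(q,A)$. The objective above is strictly convex on the simplex, so its minimizer $\beta^*$ is characterized by the KKT conditions: there is a number $\Lambda$ with $\c(1+s/d)(\beta_m^*)^{s/d}/\h(A_m)^{s/d}+q_m=\Lambda$ when $\beta_m^*>0$ and $\ge\Lambda$ otherwise, i.e.\ $\beta_m^*=\h(A_m)\bigl((\Lambda-q_m)/(\c(1+s/d))\bigr)_+^{d/s}$; imposing $\sum_m\beta_m^*=1$ and comparing with the displayed form of \eqref{L} forces $\Lambda=L_1$, whence $\beta_m^*=\mu_A^q(A_m)$. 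Using that $\c(\beta_m^*)^{1+s/d}/\h(A_m)^{s/d}=\beta_m^*(L_1-q_m)/(1+s/d)$ for every $m$ (both sides vanishing when $\beta_m^*=0$),
\[ G=\sum_m\beta_m^*\Bigl(\frac{L_1-q_m}{1+s/d}+q_m\Bigr)=\sum_m\beta_m^*\,\frac{L_1+sq_m/d}{1+s/d}=\int\frac{L_1+sq(x)/d}{1+s/d}\d\mu_A^q(x)=S(q,A), \]
which is \eqref{S_lim}. The two steps needing care are the ``shrink and clip to $\{q\le q_m\}$'' constructions — one so that the trial configurations incur no extra field cost while keeping the pieces at positive distance, the other (with the safety margin $\epsilon$, which turns $\overline{Z_m^\epsilon}$ into a subset of a genuinely closed null set) so that Corollary \ref{lower-opt-cor} is applicable — while keeping all estimates uniform across $s=d$ and $s>d$, which is what the limit \eqref{T-limit} is designed for; the rest is routine bookkeeping.
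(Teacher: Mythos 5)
Your argument is correct and follows essentially the same route as the paper's: a lower bound via Corollary \ref{lower-opt-cor} along a subsequence with converging point fractions, an upper bound via trial configurations built from unweighted minimizers on slightly shrunken closed pieces kept at positive mutual distance (with the cross terms controlled as $o(\T(N))$), and an identification of the resulting simplex minimum with $S(q,A)$ through the first-order conditions — your KKT computation is just Lemma \ref{function-lemma} in different clothing. Your two refinements (clipping to the closed sublevel sets $\{q\le q_m\}$ instead of invoking inner regularity, and the $\epsilon$ safety margin that makes the hypothesis of Corollary \ref{lower-opt-cor} verifiable for a merely lower semi-continuous $q$) are sound and, if anything, tighten points the paper passes over quickly.
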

\begin{proof}[\textbf{Proof.}]
	For convenience, let $ A_0:=D $, $ A_m:=B_m,\ 1\leq m\leq M $, in this proof. We will first verify that for some positive $ \{\hat\alpha_m\}_{m=0}^M $ that add up to one,
	\begin{equation}\label{S-lim-intermediate}
	\lim\limits_{N\to\infty}  \frac{ \E(A,N) }{\tau_{s,d}(N)} = 
	 \sum_{m=0}^M \left( \frac{ \c\, \hat{\alpha}_m^{1+s/d} }{ \H(A_m)^{s/d}} +   q_m \hat{\alpha}_m   \right),
	\end{equation}
    where the values of $ \hat{\alpha}_m,\ 0\leq m \leq M $ are such that 
    \begin{equation}\label{hat-alpha-m}
    \begin{aligned}
    ( \hat{\alpha}_0,\ldots, \hat{\alpha}_M ) : = \underset{\substack{\alpha_m\geq0,\\ \sum\alpha_m =1 } }{\arg \min}   \sum_{m=0}^M \left( \frac{ \c\, \alpha_m^{1+s/d} }{ \H(A_m)^{s/d}} +   q_m \alpha_m   \right).
    \end{aligned}
    \end{equation}
	
    \textbf{(\textit{i}).} Due to the weak$ ^* $ compactness of the set $ A $, Corollary \ref{lower-opt-cor} implies 
    
    \begin{equation}
    \begin{aligned}
    \g(A)   \geq \min_{\substack{\alpha_m\geq0,\\ \sum\alpha_m =1 } }  \sum_{m=0}^M \left( \frac{ \c\, \alpha_m^{1+s/d} }{ \H(A_m)^{s/d}} +   q_m \alpha_m   \right).
    \end{aligned}
    \end{equation}
    Let a closed $  D'\subset D $ satisfy $ q(x)\equiv q_0,\ x\in D' $ and   $ \H(D')>(1-\epsilon)\H(D) $. By the same argument as in the proof of Lemma \ref{local-optimal}, for a fixed $ \epsilon > 0 $ we construct a sequence of $ N $-element sets $ \{\omega_N^0\}_{N\in\n} $ such that the subsets $  \omega_N^0 \cap D  $ and $ \omega_N^0 \cap B_m, \ 1\leq m \leq M $ are $ (s,d,0) $-energy minimizing in $ D' $ and $ B_m' $ respectively. 
    Recall that $ B_m' $ are closed subsets of $ A\cap B_m $ satisfying $ \H(B_m')> (1-\epsilon)\H(B_m) $ and $ \dist(\cup_m B_m', D )>0 $. As in Lemma \ref{local-optimal}, we construct $ \{\omega_N^0\}_{N\geq1}  $ so that the following limits exist and are finite $
    \alpha_0 := \lim_{\mathcal{N} \ni N\to\infty} \#(\omega_N^0\cap D') / N $ and $ \alpha_m := \lim_{\mathcal{N} \ni N\to\infty} \#(\omega_N^0 \cap B_m) / N  $, $ 1\leq m \leq M $. Since $ \E(A,N)\leq \e ( {\omega}_N^0,N) $, equation \eqref{cross-terms-eq} implies
    \begin{equation}
    \G(A)\leq  \lim_{N\to\infty}  \frac{\e ( {\omega}_N^0,N)}{\T(N)}  
    =  \sum_{m=0}^M \left( \frac{ \c\, \alpha_m^{1+s/d} }{ ((1-\epsilon)\H(A_m))^{s/d}} +   q_m \alpha_m   \right).
    \end{equation}
    This gives \eqref{S-lim-intermediate} after taking $ \epsilon\to0+ $.
    
    \textbf{(\textit{ii}).} Fix an $ A_m $ with strictly positive $ \hat{\alpha}_m $, say, $ A_0 $ and assume $ q_0 \leq q_m $ for definiteness. Pick any of the remaining sets $ A_k, \ 1\leq k \leq M $ and denote $\beta= \beta(k):= \alpha_0+\alpha_k$. Consider the terms on the right hand side of \eqref{hat-alpha-m} that contain either $ \alpha_0 $ or $ \alpha_k $:
    \begin{equation}
    \bar{F}( {\alpha}_1, {\alpha}_k): =  \sum_{m=0,k} \left( \frac{ \c\,  {\alpha}_m^{1+s/d} }{ \H(A_m)^{s/d}} +   q_m  {\alpha}_m   \right).
    \end{equation}
    Now choose the coefficients of the function $ F(t) $ in Lemma \ref{function-lemma} so that
    \begin{equation}
    F(t) = \  t^{1+s/d}\frac{\c}{\H(A_0)^{s/d}} + 
    t\frac{q_0}{\beta^{s/d}}  +(1-t)^{1+s/d}  \frac{\c}{\H(A_k)^{s/d}}    +(1-t)\frac{q_k}{\beta^{s/d}},
    \end{equation}
    then $ \beta^{1+s/d} F ( \alpha_0 / \beta) =    \bar{F}( {\alpha}_0, {\alpha}_k) $. Because of \eqref{hat-alpha-m}, it must be that $ \hat\alpha_0 / \beta $ is the value $ \hat{t}\in(0,1] $ for which the minimum of $ F(t) $ is attained. According to Lemma \ref{function-lemma}, either $ \hat{t} = 1 $, or
    \begin{equation}\label{steps}
    \frac{q_k-q_0}{\c(1+s/d)}=\left(\frac{\hat{\alpha}_0}{\H(A_0)} \right)^{s/d}- \left(\frac{\hat{\alpha}_k}{\H(A_k)}\right)^{s/d}.  
    \end{equation}
    Equation \eqref{steps} thus applies to any pair of sets in $ A_0,\ldots,A_M $ provided both the corresponding $ \hat{\alpha}_m $'s is positive. Also, if $ \hat{\alpha}_k >0 $, then $ q_k < q_l $ for every $ l $ such that $  \hat{\alpha}_l =0  $.
    To summarize, for some $ L_1 $ there holds 
    \begin{equation}\label{step}
    \left(\frac{\hat{\alpha}_m}{\H(A_m)}\right)^{s/d} = \left(\frac{L_1-q_m}{\c(1+s/d)}\right)_+, \qquad 0\leq m\leq M.
    \end{equation}
    It follows from $ \sum_m \hat{\alpha}_m = 1 $ that the first of equations \eqref{L} is satisfied for this $ L_1 $. 
    
    Finally, we can evaluate the right hand side of \eqref{S-lim-intermediate}:
    \[ \lim\limits_{N\to\infty}  \frac{ \E(A,N) }{\tau_{s,d}(N)} =  \sum_{m=0}^M \left(\hat{\alpha}_m \left(\frac{L_1-q_m}{1+s/d} \right)_+ + q_m \hat{\alpha}_m\right) = \sum_{m=0}^M \hat\alpha_m \frac{L_1+sq_m/d}{1+s/d} , \]
	where in the last equality we used $ \hat\alpha_m = 0 \iff (L_1-q_m)_+=0 $. This implies \eqref{S_lim} because from \eqref{step},  $  \hat{\alpha}_0 = \mu^q_A(D)  $ and $ \hat{\alpha}_m = \mu^q_A(A_m), \ 1\leq m \leq M $, for the $ \mu^q_A $ defined in \eqref{L}.
\end{proof}

\begin{proof}[\textbf{Proof of Theorem \ref{thm_weak_lim}.}]

Note that as $ q $ is lower semi-continuous on the compact set $ A $, it is bounded below there, so we may assume without loss of generality $ q $ is positive.

\textbf{(\textit{i}).} Let $A\in\cl $ and let $0\leq q < C $ for a positive constant $ C $. 
We will further use that the restriction $ \H $ is a Radon measure on $ \R^p $. Namely, \cite[Theorem 7.5]{mattila1999geometry} implies it is locally finite because $ A $ is the Lipschitz image of a compact set in $ \R^d $. It is also Borel regular as a restriction of Hausdorff measure, see \cite[Theorem 4.2]{mattila1999geometry}. Then by \cite[Corollary 1.11]{mattila1999geometry}, $ \H $ is a Radon measure.

Fix from now on a number $ 0<\epsilon<1/4 $.  Apply Lemma \ref{almost-constant} to the measure $ \H $ and function $ q $, denote the set of $ x\in A $ for which there exists an $ R(x,\epsilon) $ as described in the Lemma by $ A' $, and consider covering of $ A' $ by the collection of closed balls $ \{ {B}(x,r): x\in A',\ 0 < r < R(x,\epsilon)\} $. Choose for each $ x\in  A' $ a sequence of radii $ r_{x,k}\to0,\  k\to\infty,\ k\in\N, $ for which
\begin{equation}\label{almost-const-applied}
\frac{\H\left[\{ z\in B(x,r_{x,k}) : |q(z)-q(x)| < \epsilon  \} \right] }{\H[B(x,r_{x,k})]} > 1-\epsilon
\end{equation}
and also $ y\in A\cap {B}(x,r_{x,k})   \implies  q(y) > q(x)-\epsilon$. The latter is possible due to the lower semicontinuity of $ q $. 

Let $ \{ {B}(x,r_{x,k}) \} $ be a Vitali cover of $ A'$, so one can apply the version of Vitali's covering theorem for Radon measures \cite[Theorem 2.8]{mattila1999geometry} to produce a (at most) countable subcollection of pairwise disjoint $ \{B_{j}:= {B}(x_j,r_j) :j\geq1 \} $ for which $ \h\left( A'\; {\setminus} \;  \cup_{j\geq1} B_{j} \right)=0 $.
Using $ \mathcal{H}_d(A)<\infty $, $ \{B_j\}_{j\geq1} $ can be chosen so that $ \h(A\cap\partial B_j)=0,\ j=1,2,\ldots $ (there are uncountable many options for the value of $ r_j $, at most countably many of them positive). Since  $ \h(A\setminus A' )= 0 $, we can fix a  $ J\in \N $ such that $ \h\left(A\setminus\cup_{j=1}^J B_{j} \right)<\epsilon. $
Let $  D:= \overline{A\setminus\cup_{j=1}^{\,J}  \B } $.
As $ \H(\partial B_j )=0, \ 1\leq j \leq J $, there holds $ \H(D)<\epsilon $.

Define the two simple functions $ \Q,\ \q $ to be constant on each $  \B,\ 1\leq j\leq J $:
\begin{align}\label{simple_approximations}
\begin{split}
\Q(x)  &:= \begin{cases} q(x_j)+\epsilon, & x\in \B ,\\
C,  & x\in D\setminus  \bigcup_j \B.
\end{cases} \\ 
\q(x) &:= \begin{cases} \max(0,q(x_j)-\epsilon), & x\in \B\setminus D,\\
0,  & x\in D.
\end{cases} 
\end{split}
\end{align}
Such $ \q,\ \Q $ are lower semi-continuous on $ A $. 
Lemma \ref{scalable} gives equation \eqref{S_lim} applied to  $ \q $ and $ \Q $ on $ A $. Let $ \B':= \{z\in A\cap\B : |q(z)-q(x_j)|<\epsilon \} $. Then,
\begin{equation}\label{double-bound}
 q(x_j)-\epsilon\leq \q(x)\leq q(x) \leq \Q(X)= q(x_j)_\epsilon, \quad x\in \B'.
\end{equation}
In view of \eqref{almost-const-applied} for $ \B' $ and $ \H(D)<\epsilon $, \eqref{double-bound} implies that both $ \q $ and $ \Q $ converge $ \H $-a.e. to $ q $ as $ \epsilon\to0+ $. Since both are bounded by $ C+1 $, the dominated convergence theorem is applicable, and 
  \begin{equation}\label{double-limit}
   \lim_{\epsilon\to0} S(\q,A) = \lim_{\epsilon\to0} S(\Q,A) = S(q,A). 
  \end{equation}
We now estimate $  \lim_{N\to\infty}  { \E(A,N) }/{\tau_{s,d}(N)} $ in terms of $  \lim_{N\to\infty}  { \mathcal{E}^{\q}_{s,d}(A,N) }/{\tau_{s,d}(N)} $ and $  \lim_{N\to\infty}  { \mathcal{E}^{\Q}_{s,d}(A,N) }/{\tau_{s,d}(N)} $. Firstly, by construction $ q(x)\geq \q(x), \ x\in A $, which gives 
\begin{equation}\label{lower_bound_energy}
\E(A,N)\geq \mathcal{E}^{\q}_{s,d} (A,N). 
\end{equation}
On the other hand,
\begin{equation}\label{upper_bound_energy}
\E(A,N) \leq \E(D\cup\bigcup_j \B',N)\leq    \mathcal{E}^{\overline{q}}_{s,d} (D\cup\bigcup_j \B',N)\leq \frac{\T(N)}{(1-\epsilon)^{s/d}} S(\Q,A ),
\end{equation}
where the last inequality follows from \eqref{almost-const-applied} and \eqref{S-lim-intermediate}. 
This proves \eqref{S_lim}. 

	\textbf{(\textit{ii}).} It remains to prove equation \eqref{weak_lim} for a sequence $ \{ {\omega}_N\}_{N\geq2} $ satisfying \eqref{asymp_en_min}.
	 Since the probability measures on $ A $ are weak$ ^* $ compact, one can pick a subsequence $ \{\hat{\omega}_N\}_{N\in\mathcal{N}}\subset\{\omega_N\}_{N\geq2} $ for which the corresponding normalized counting measures have a weak$ ^* $ limit:
	\[  \frac{1}{N} \sum_{x\in\hat{\omega}_N} \delta_x \stackrel{*}{\longrightarrow} \mu, \qquad  \mathcal{N}\ni N\to \infty. \]
	Then $ \mu $ is $ \H $-absolutely continuous by the Lemma \ref{abs-continuity}. 
	Set $ \rho(x):=\frac{\d\mu}{\d \H}(x) $.
	
	 Since the integral $ \int_A \rho \d\H = 1 $ is finite, at $ \H $-a.e. point $ x $ of $ A $ there holds 
	\begin{equation}\label{x-differentiation}
	\lim_{r\to0} \frac{1}{\H( B(x,r))} \int_{B(x,r)} \rho\d\H = \rho(x).
	\end{equation} 
	Fix two distinct points $ x_1,\ x_2 $ for which both \eqref{almost-constant-eq} for measure $ \H $ and \eqref{x-differentiation} hold. Then for an arbitrary fixed $ 0< \epsilon < \min( 1/2, \rho(x_1),\rho(x_2), q(x_1), q(x_2)) $ there exist closed disjoint balls $ B_1:=B(x_1,r_1), \ B_2:=B(x_2,r_2) $ centered around $ x_1, \ x_2 $ such that equations 
	\begin{align}
	& 
	\frac{\H\left[\{  z\in A\cap B_m : |q(z)-q(x_m)| < \epsilon  \} \right] }{\H[B_m]} > 1-\epsilon, \qquad m=1,2,
	\label{lebesgue1} \\	
	&\int_{B_m} \left|\rho(z) -\rho(x_m) \right| \d \H(z) <\epsilon\H(B_m),  \label{lebesgue2} \qquad m=1,2,
	\end{align}
	hold for all closed balls concentric with $ B_m $ of radius at most $ r_m $.
	Without loss of generality we will also require $ \H(\partial B_1 )= \H( \partial B_2 )=0 $ and that $ q(x) \geq q(x_m)-\epsilon $ for all $ x\in B_m,\ m=1,2 $ (which can be assumed by lower semi-continuity). Let $ q_m:= q(x_m), \ m=1,2 $; let also $ q_1\leq q_2 $.
	
	Due to $ \mu $ being absolutely continuous with respect to  $ \H $, the assumption $ \H(\partial B_m )=0,\ m=1,2 $, and the limit \eqref{T-limit}, Lemma \ref{lower-optimum} implies
	\begin{equation}\label{lower-used}
	\begin{aligned}
	\slim   \frac{\e (\hat{\omega}_N\cap(B_1\cup B_2) ,N)}{\T(N)}   \geq  \sum_{m=1,2}  \left( \frac{  \c\, \mu(B_m)^{1+s/d} }{\H(B_m)^{s/d}} 
	+    \mu(B_m) (q_m-\epsilon)  \right).
	\end{aligned}
	\end{equation}
	On the other hand, from Lemma \ref{local-optimal}:
	\begin{equation}\label{optimal-used}
	\begin{aligned}
	\slim   \frac{\e (\hat{\omega}_N\cap(B_1\cup B_2) ,N)}{\T(N)}   \leq 
	\min
	\left\{ 
	\sum_{m=1,2} 
	\frac{ \c\, \alpha_m^{1+s/d} }{((1-\epsilon) \H(B_m))^{s/d}} +    (q_m+\epsilon) \alpha_m   
	\right\} 
	\end{aligned}
	\end{equation}
	with minimum taken over positive $ \alpha_1,\ \alpha_2 $ satisfying $ \alpha_1+\alpha_2= \mu(B_1)+\mu(B_2) $. If we denote

	\begin{equation}\label{starred-alphas}
	\begin{aligned}
	( \hat{\alpha}_1, \hat{\alpha}_2 ) : = \arg\min  \Bigg\{ \sum_{m=1,2}  \bigg(  \frac{ \c\, \alpha_m^{1+s/d} }{((1-\epsilon) \H(B_m))^{s/d}} &  +    (q_m+\epsilon) \alpha_m   \Bigg) \\ 
	&   \boldsymbol{:}  \alpha_1+\alpha_2 = \mu(B_1) +\mu(B_2)   \Bigg\}, 
	\end{aligned}
	\end{equation}
	and argue as in the proof of Lemma \ref{scalable}, we obtain, similarly to \eqref{steps}, that  $ ( \hat{\alpha}_1, \hat{\alpha}_2 ) $ satisfy
	\begin{equation}\label{steps-starred}
	\frac{q_2-q_1}{\c(1+s/d)}= \left(\frac{\hat{\alpha}_1}{(1-\epsilon)\H(B_1)} \right)^{s/d}- \left(\frac{\hat{\alpha}_2}{(1-\epsilon)\H(B_2)}\right)^{s/d}.  
	\end{equation}
	Inequalities  \eqref{lower-used}--\eqref{optimal-used} and the definition of $ ( \hat{\alpha}_1, \hat{\alpha}_2 ) $ give:
	\begin{equation}\label{finally}
	\begin{aligned}
	 & \sum_{m=1,2}  \left( \frac{  \c\, \mu(B_m)^{1+s/d} }{\H(B_m)^{s/d}} 
	 +    \mu(B_m) (q_m-\epsilon)  \right) \\
	 \leq & \sum_{m=1,2} \left( \frac{ \c\,  \hat{\alpha}_m^{1+s/d}}{((1-\epsilon) \H(B_m))^{s/d}}   +    \hat{\alpha}_m (q_m+\epsilon)\right) \\
	 \leq & \sum_{m=1,2} \left( \frac{ \c\, \mu(B_m)^{1+s/d} }{((1-\epsilon) \H(B_m))^{s/d}} +  \mu(B_m)  (q_m+\epsilon)  \right). 
	\end{aligned}
	\end{equation}
	
	Observe that if in the above construction we fix the ball $ B_1 $ and allow $ r_2 \to 0$, the first term on the right hand side of \eqref{steps-starred} is bounded, so the ratio $ \hat{\alpha}_2 / \H(B_2) $ is bounded as well, say, $  \hat{\alpha}_2 / \H(B_2)\leq R_2 $\footnote{due to the assumptions $ q_1 \leq q_2 $ and $ \H(B_2)/\H(B_1) <\epsilon  $, the equality $ \hat{\alpha}_1 + \hat{\alpha}_2 = \mu(B_1) + \mu(B_2) $, and equations \eqref{lebesgue2} and \eqref{steps-starred},  one can take $ R_2 =   \rho(x_1) + \rho(x_2)+ 2  $ as a rough estimate. }.
	Let also $ r_2 $ be such that $ \H(B_2)/\H(B_1) <\epsilon  $ and  $ \hat{\alpha}_2/\H(B_1) <\epsilon  $. Due to  equation   \eqref{lebesgue2}, there holds $ |\mu(B_m)/\H(B_m)-\rho(x_m)|<\epsilon,\ m=1,2 $. Dividing \eqref{finally} through by $ \H(B_1) $ for such a choice of $ r_2 $ gives:
	\begin{equation}\label{finally-divided}
	\begin{aligned}
	 & \c(\rho(x_1)-\epsilon)^{1+s/d}  
	+    (\rho(x_1)-\epsilon )  (q_1-\epsilon)    \\
	\boldsymbol{\leq} & \frac{ \c }{(1-\epsilon)^{s/d} }  \left(\frac{\hat{\alpha}_1}{\H(B_1)}\right)^{1+s/d}    +    \left(\frac{\hat{\alpha}_1}{ \H(B_1)  }\right)  (q_1+\epsilon) \\ 
	& \qquad +\left(\frac{\hat{\alpha}_2}{\H(B_1)}\right) \frac{ \c }{(1-\epsilon)^{s/d} }  \left(\frac{\hat{\alpha}_2}{\H(B_2)}\right)^{s/d}  + \left(\frac{\hat{\alpha}_2}{\H(B_1)}\right)(q_2 + \epsilon) \\
    \boldsymbol{\leq} &   \frac{ \c }{(1-\epsilon)^{s/d} }  { (\rho(x_1)+\epsilon)^{1+s/d}}  +(\rho(x_1)+\epsilon )  (q_1+\epsilon) \\  
	& \qquad +  \epsilon \left(\frac{ \c }{(1-\epsilon)^{s/d} }  { (\rho(x_2)+\epsilon)^{1+s/d}}  +(\rho(x_2)+\epsilon )  (q_2+\epsilon)\right),\\
	\end{aligned}
	\end{equation}
	Finally, because $ \epsilon > 0 $ was arbitrary and the function $ \c t^{1+s/d} + q(x_1)t,\ t\geq 0 $ is monotone,  inequalities \eqref{finally-divided} yield by the above discussion
	\begin{equation}
	 \rho(x_1) =  \lim_{r_1\to 0} \frac{\hat{\alpha}_1}{\H(B_1)}.
	\end{equation}
	We could similarly fix the ball $ B_2 $ first and ensure $ \H(B_1)/\H(B_2) <\epsilon  $, taking $ r_2\to 0 $ afterwards, thus also
	\begin{equation}
	\rho(x_2) =  \lim_{r_2\to 0} \frac{\hat{\alpha}_2}{\H(B_2)}.
	\end{equation}
	In conjunction with \eqref{steps-starred} the last two equations give
	\begin{equation}\label{uniform_rho}
	\frac{ q_2 -  q_1  }{\c (1+s/d) } = \rho(x_1)^{s/d}   - \rho(x_2)^{s/d}
	\end{equation}
	for $ \H\times\H $-a.e. pair $( x_1,x_2 )\in A\times A $. Due to the normalization property $ \int_A \rho (x) \d \H =1 $ and the definition of $ L_1 $ in \eqref{L}, 
	\begin{equation}\label{solution}
	\rho(x) = \left(\frac{L_1 - q(x) }{\c (1+s/d) }\right)^{d/s}_+  \qquad \H\text{-a.e.}
	\end{equation}
	which coincides with the density in formula \eqref{weak_lim}.
	
	\textbf{(\textit{iii}).} Finally, we turn to the case when the function $ q $ need not be bounded above. Consider
	\[q_{ \scalebox{0.6}{\ensuremath{C}}}(x):= \begin{cases} q(x), &q(x)\leq C, \\
	C,\ &\text{otherwise}. \end{cases}\]
	Recall that $ A(C)= \{ x\in A : q(x)\leq C \} $ is a $ d $-rectifiable set as a closed subset of $ A $. The Theorem \ref{thm_weak_lim} is therefore applicable to each function $ \qc$ if seen as defined on  $A(C) $. By Remark \ref{L_is_finite}, the value of $ L_1 $ is finite. For all $C\geq L_1$, 
	\begin{equation}\label{prf_qc_large}
	\supp(\mu_A^{\qc})\cap \{x:q(x)>C\}=\emptyset. 
	\end{equation}
	Inequality $ \qc(x)\leq q(x)$ for all $ x\in A $ implies
	
	\[ \mathcal{E}^{\qc}_{s,d} (A,N)\leq \E (A,N), \quad N\geq 2, \]
	so $ S(\qc,A )\leq \g(A) $. On the other hand, due to set inclusion,
	\[\g(A)\leq \G (A)\leq \G(A(C))= S(\qc,A(C))= S(\qc,A), \]
	where the last two equalities follow from Theorem \ref{thm_weak_lim} applied to the function $ \qc $ and sets $ A(C)$ and $A $ respectively, and equation \eqref{prf_qc_large}. To summarize, $ \g(A)=\G(A)=S(q,A) $.
	
	Let now $ \{{\omega}_N\}_{N\geq2} $ be a sequence satisfying \eqref{asymp_en_min}. Fix a $ C>L_1 $.  Because $ \qc(x)\leq q(x)\ $ for all $ x\in A $ and $ S(q,A)=S(\qc,A) $, the sequence $ \{{\omega}_N\}_{N\geq2} $ is also asymptotically $ (s,d,\qc) $-energy minimizing. Then by Theorem \ref{thm_weak_lim} this sequence converges weak$ ^* $ to $ \d\mu_A^{\qc} $, and it remains to observe that for $ C >L_1 $ it holds $ \d\mu_A^{\qc} =  \d\mu_A^{q} $, where the two measures are defined in equation \eqref{L}.
\end{proof}

\begin{proof}[\textbf{Proof of Theorem \ref{thm_recovery_bounded}.}]
	The desired result is an immediate application of Theorem \ref{thm_weak_lim} since using equation \eqref{L} for the external field from \eqref{q_definition} gives $ L_1=0 $, so the asymptotic distribution is indeed \eqref{density_bounded}.
\end{proof}

\begin{proof}[\textbf{Proof of Proposition \ref{prop_stability}.}]
	We have
	\[   q'(x) = (1+\Delta)q(x). \]
	According to \eqref{L}, the equation 
	\begin{equation}\label{l_equation}
	\int \left(\frac{l-q'(x)}{M_{s,d}}\right)^{d/s}_+\d\H(x) =1 
	\end{equation}
	for variable $ l $ has the unique solution $ l= L'_1 $.
	Using \eqref{q_definition}, it can be rewritten  as
	\begin{equation}\label{l_property}
	\int \left( \rho^{s/d} +  \frac{ \Delta \rho^{s/d} + l }{M_{s,d}}\right)^{d/s}_+\d\H(x)=   1,  
	\end{equation}
	which, in view of $ \int_A \rho \d\H =1  $ and monotonicity of the function $ (\cdot)_+ $, shows that the solution of \eqref{l_equation} satisfies $ |l|\leq |\Delta|\, \|\rho\|_\infty^{s/d} $, that is,
	\[ |L_1'|\leq |\Delta|\, \|\rho\|_\infty^{s/d}. \]
	We will therefore write $ L_1' = \kappa \Delta $ with $ |\kappa|\leq \|\rho\|_\infty^{s/d} $.

	Let us now estimate the difference between densities $ \rho' = \d\mu_A^{q'} / \d\H  $ and $\rho = \d\mu_A^{q}/ \d \H $ in terms of $ \Delta $. Factor out $ \rho^{s/d} $ from the parentheses in \eqref{l_property} and observe that for $ \Delta < {M_{s,d}}/  \left({1+(\|\rho\|_\infty \delta^{-1} )^{s/d} }\right) $ the expression inside is nonnegative, which allows to expand it up to $ o(\Delta) $:
	\[
	\begin{aligned}
	  \rho' =   \rho\left( 1 +  \frac{ \Delta (1 + \kappa /\rho^{s/d} ) }{M_{s,d}}\right)^{d/s}  =  \rho + \Delta  \frac{ d(1 + \kappa /\rho^{s/d} ) }{sM_{s,d}}  + o(\Delta), \qquad \Delta \to 0. 
	\end{aligned}
	\]
\end{proof}

\subsection{Proofs of separation and covering properties}
To obtain point separation results we use techniques of \cite{KuSa1998}, \cite{MEbook}. 
\begin{proof}[\textbf{Proof of Lemma \ref{separation_lemma}.}]	
Fix an $ x\in\hat{\omega}_N $. Because the minimal value of energy $\e(\omega_N)$ is attained for $\hat{\omega}_N $, one must have
\begin{equation}\label{minimality_x^N_i}
U(x,\hat{\omega}_N) \leq U(z,\hat{\omega}_N), \quad z\in A,  
\end{equation}
where $ U(\cdot, \hat{\omega}_N) $ is defined in \eqref{variable_point}. According to Frostman's lemma, \cite[Theorem 8.8]{mattila1999geometry}, for the set $ A $ there exists a positive Borel measure $ \lambda $ satisfying $ \lambda(A) > 0 $ and  such that for all $ x \in \R^p $ and $ R > 0 $,
\begin{equation}\label{semi-d-regular}
\lambda\big( B(x,R)\cap A \big) \leq  R^d.
\end{equation}
By continuity of measure $ \lambda $ from below there exists a positive constant $ H $ for which $ \lambda[A(H)] \geq  2\lambda(A)/3 $; this constant then depends on $ A $  and $ q $, $ H = H(A,q) $. Observe that when $ q $ is bounded from above, $ H $ can be chosen equal to its upper bound. Let $ r_0:= ({\lambda(A)}/{3N} )^{1/d}   $. Consider the set
\[D_x:= A(H)\setminus\bigcup_{\substack{y\in \hat{\omega}_N:\\ y\neq x  }} B(y,r_0). \]
From \eqref{semi-d-regular}:
\[\lambda(D_x)\geq 2\lambda(A)/3-\sum_{\substack{y\in \hat{\omega}_N:\\ y\neq x  }} \lambda \left(A\cap B(y,r_0)\right)\geq \lambda(A)/3.\]
Averaging $ U(z,\hat{\omega}_N) $ on $ D_x $ and taking into account \eqref{minimality_x^N_i} yields
\begin{align}
	U(x,\hat{\omega}_N) \leq& \lambda(D_x)^{-1}\int_{D_x}  U(z,\hat{\omega}_N) d\lambda(z) \nonumber \\ 
	\leq & \frac{3}{\lambda(A)} \left(\sum_{\substack{y\in \hat{\omega}_N:\\ y\neq x  }} \int\limits_{A\setminus B(y,r_0)}    |z-y|^{-s}  \d\lambda(z) + \frac{\T(N)}{N} \int_A  q(z) \d\lambda(z)\right).   \label{integral_line_eq}
\end{align}
Denote $ R_0:= \diam A $. For the integrals in the sum \eqref{integral_line_eq}, use \eqref{semi-d-regular} again :
\begin{equation}\label{I_integral_estimate}
\begin{aligned}
I(y,r):= &\int\limits_{A\setminus B(y,r)}  \negthickspace\negthickspace  |z-y|^{-s}\d\lambda(z) = \int\limits_{0}^{r^{-s}}  \lambda \left\{z\in A\setminus B(y,r): |z-y|^{-s}>t\right\}\d t \\
\leq & \lambda(A)R_0^{-s} + \int\limits_{R_0^{-s}}^{r^{-s} } \lambda\left[ A\cap B(y,t^{-1/s}  ) \right]  \d t\\ \leq & \begin{cases} r^{d-s}\, s/(s-d), &s>d,  \\   (1+d\log(R_0/r)) , &s=d. \end{cases}
\end{aligned} 
\end{equation}
This estimate is independent of $ y $. Using the definition of $ r_0 $, in the case $ s>d $:
\[\begin{aligned}
U(x,\hat{\omega}_N)\leq&  \frac{3}{\lambda(A)} \left( NI(y,r_0) +N^{s/d} \int_{A}  q(z)\d\lambda(z) \right) \\
\leq &\frac{3}{\lambda(A)}\left( \frac{Ns}{r_0^{s-d}(s-d)} +\lambda(A)H N^{s/d}    \right)\\
= &  \left(\frac{s}{s-d}\left( \frac{3}{\lambda(A) } \right)^{s/d} +3H \right)N^{s/d}= C(A,s,d,q)N^{s/d}.
\end{aligned} \]
Similarly, for $ s=d $,
\[ \begin{aligned}
U(x,\hat{\omega}_N)\leq &\frac{3}{\lambda(A)}   N(1+d\log(R_0/r_0))+ 3H N\log N  \\
\leq &\, C(A,d,q) N\log N = C(A,d,q)N\log N.
\end{aligned} \]
This proves the desired statement. 
\end{proof}
\begin{proof}[\textbf{ Proof of Corollary \ref{restrict-minimizers}}]
	It is immediate from Lemma \ref{separation_lemma} and nonnegativity of $ q $ that each $ x\in\hat{\omega}_N $ satisfies $ x\in A(C(A,s,d,q)) $ with the constant taken from \eqref{separation_variable_x}.
\end{proof}
\begin{proof}[\textbf{Proof of Theorem \ref{separation}.}]
Let $ |x -y|=\delta(\hat{\omega}_N ), \ x,y \in \hat{\omega}_N $. From Lemma \ref{separation_lemma}, for $ N\geq 2 $,
\[ \begin{aligned}
\delta(\hat{\omega}_N)^{-s}& =|x-y|^{-s} \leq U(x,\hat{\omega}_N)   \leq U(x,\hat{\omega}_N)\leq C(A,s,d,q) \begin{cases}
N^{s/d} & s >d; \\
N\log N    & s=d, 
\end{cases}
\end{aligned}  \]
 which implies the theorem.
\end{proof}
Similarly to the function \eqref{variable_point}, for an $ r>0 $ and $ y\in A $ let
\begin{equation}\label{u-local}
U_r(x,\omega_N):= \sum_{y\in \omega_N(x,r)}  |y-x|^{-s},
\end{equation}
where $ \omega_N(x,r):=\{y\in\omega_N: y\in B(x,r)\} $ for a fixed sequence of discrete configurations $ \{ {\omega}_N \}_{N\geq2} $.
\begin{lemma}\label{lemma_pre-covering} Let $ s> d $. Assume that $ A\subset\R^{p} $ satisfies $ \mathcal{H}_d(A)>0 $, is compact and $ d $-regular with respect to $ \lambda $. Let  $ q\in L^1 (A,\lambda) $ be a nonnegative lower semi-continuous function and $ \{\hat{\omega}_N\}_{N\geq2} $ be a sequence of $ (s,d,q) $-energy  minimizers. If for a point $ x\in A $ and some $ r>0 $,
\begin{equation}\label{lemma_lower_bound}
U_r(x,\hat{\omega}_N)\geq C  N^{s/d}, \quad N\geq 2, 
\end{equation}
then 
\[ \dist(x,\hat{\omega}_N)\leq \widetilde{C}(C,A,\lambda,s,d) N^{-1/d} , \quad N\geq 2. \]
\end{lemma}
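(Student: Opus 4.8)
The plan is to derive an upper bound for $U_r(x,\hat\omega_N)$ of the form $C_8\,\dist(x,\hat\omega_N)^{d-s}N$, which, combined with the hypothesized lower bound $U_r(x,\hat\omega_N)\geq C N^{s/d}$, immediately forces $\dist(x,\hat\omega_N)\leq \widetilde C N^{-1/d}$ since $s>d$. The upper bound will rest on the observation that for an energy minimizer the $j$-th closest point of $\hat\omega_N$ to $x$ cannot be much closer than $N^{-1/d}j^{1/d}$, so the series defining $U_r$ is dominated by its first term and has a summable tail.

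First I would invoke Theorem \ref{separation}: as $s>d$ and $\{\hat\omega_N\}$ are $(s,d,q)$-energy minimizers, there is $c_1>0$ with $\delta(\hat\omega_N)\geq c_1 N^{-1/d}$ for $N\geq 2$. Combining this with the $d$-regularity \eqref{regular_WRT_measure} of $A$ with respect to $\lambda$ yields a packing estimate: the balls $B(y,\delta(\hat\omega_N)/2)$, $y\in\hat\omega_N$, are pairwise disjoint, each satisfies $\lambda(B(y,\delta(\hat\omega_N)/2)\cap A)\geq c_0(\delta(\hat\omega_N)/2)^d$ (the radii involved being at most $\diam A$), and for $\rho\geq\delta(\hat\omega_N)/2$ their union over $y\in\hat\omega_N\cap B(x,\rho)$ lies in $B(x,2\rho)\cap A$, of $\lambda$-measure at most $C_0(2\rho)^d$; hence $\#(\hat\omega_N\cap B(x,\rho))\leq C_2(\rho/\delta(\hat\omega_N))^d\leq C_2 c_1^{-d}\rho^d N$, while $\#(\hat\omega_N\cap B(x,\rho))\leq 1$ for $\rho<\delta(\hat\omega_N)/2$. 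Writing the distances from $x$ to the points of $\hat\omega_N$ in increasing order as $d_1\leq d_2\leq\cdots$, this gives $d_j\geq c_3 N^{-1/d}j^{1/d}$ for every $j\geq 2$, with $c_3>0$ depending only on $A,\lambda,s,d$ and the separation constant.

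Next I would dispose of the trivial case $\eta:=\dist(x,\hat\omega_N)\leq N^{-1/d}$ (then the conclusion holds with $\widetilde C\geq1$) and assume $\eta>N^{-1/d}$, so $\eta^{-d}<N$ and, after shrinking $c_3$ to have $c_3\leq 1$, also $d_1=\eta\geq c_3 N^{-1/d}$, so that $d_j\geq c_3 N^{-1/d}j^{1/d}$ for all $j\geq 1$. Putting $j_0:=\lceil c_3^{-d}\eta^d N\rceil$ and using both $d_j\geq\eta$ and $d_j\geq c_3 N^{-1/d}j^{1/d}$, I would estimate
\[
U_r(x,\hat\omega_N)=\sum_{j:\,d_j<r}d_j^{-s}\ \leq\ \sum_{j=1}^{j_0}\eta^{-s}\ +\ \sum_{j>j_0}c_3^{-s}N^{s/d}j^{-s/d}\ \leq\ C_8\,\eta^{d-s}N,
\]
where the first sum is at most $(c_3^{-d}\eta^d N+1)\eta^{-s}\leq(c_3^{-d}+1)\eta^{d-s}N$ (using $\eta^{-s}=\eta^{d-s}\eta^{-d}<\eta^{d-s}N$), and the tail is at most $c_3^{-s}N^{s/d}\cdot\frac{d}{s-d}j_0^{1-s/d}\leq\frac{d}{s-d}c_3^{-d}\eta^{d-s}N$ because $s/d>1$ gives $\sum_{j>j_0}j^{-s/d}\leq\int_{j_0}^{\infty}t^{-s/d}\,dt$ and $j_0^{1-s/d}\leq(c_3^{-d}\eta^d N)^{1-s/d}$.

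Finally, $C N^{s/d}\leq U_r(x,\hat\omega_N)\leq C_8\,\eta^{d-s}N$ gives $\eta^{s-d}\leq(C_8/C)N^{1-s/d}=(C_8/C)N^{-(s-d)/d}$, hence $\eta\leq(C_8/C)^{1/(s-d)}N^{-1/d}$, and $\widetilde C:=\max\{1,(C_8/C)^{1/(s-d)}\}$ works. The main obstacle is the upper bound for $U_r$: bounding each of the $\lesssim r^d N$ points within distance $r$ of $x$ by the crude $\eta^{-s}$ only yields $\eta\lesssim N^{-(s-d)/(sd)}$, which is weaker than the claimed $N^{-1/d}$; getting the sharp exponent requires the packing information $d_j\gtrsim N^{-1/d}j^{1/d}$ together with the summability of $\sum j^{-s/d}$, which is precisely where $s>d$ enters.
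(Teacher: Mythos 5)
Your proposal is correct, and it reaches the same pivotal inequality as the paper, namely $U_r(x,\hat{\omega}_N)\leq \hat{C}\,N\,[\dist(x,\hat{\omega}_N)]^{d-s}$, from which the conclusion follows exactly as you say. The difference lies in how that upper bound is produced. The paper's proof (following \cite{HaSaWh2012}) replaces each term $|y-x|^{-s}$ by its average over a small ball $B_y=A\cap B(y,r_0)$ of radius $r_0=\epsilon C_1 N^{-1/d}$ (using the lower regularity bound in \eqref{regular_WRT_measure}), observes that these balls are disjoint and avoid $B(x,r_\epsilon)$ with $r_\epsilon\asymp\dist(x,\hat{\omega}_N)$, and then bounds the resulting integral $\int_{A\setminus B(x,r_\epsilon)}|z-x|^{-s}\d\lambda$ by the layer-cake estimate \eqref{I_integral_estimate}; it even optimizes over $\epsilon$. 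You instead run a purely combinatorial packing argument: separation (Theorem \ref{separation}) plus both sides of \eqref{regular_WRT_measure} give $\#(\hat{\omega}_N\cap B(x,\rho))\lesssim \rho^d N$, hence $d_j\gtrsim N^{-1/d}j^{1/d}$ for the ordered distances, and the sum $\sum_j d_j^{-s}$ is controlled by splitting at $j_0\asymp \eta^d N$ and using $\sum_{j>j_0}j^{-s/d}<\infty$. Both routes use the same two ingredients (the separation theorem and Ahlfors regularity) and the same place where $s>d$ enters (convergence of the tail, respectively of the integral $I(x,r_\epsilon)$); your version is slightly more elementary in that it avoids integrating against $\lambda$ altogether, at the cost of a somewhat longer bookkeeping with the index $j_0$ and the case split $\eta\le N^{-1/d}$. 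One small point worth making explicit in a write-up: the bound $d_j\geq c_3 N^{-1/d}j^{1/d}$ for $j\geq 2$ needs $d_j\geq\delta(\hat{\omega}_N)/2$ so that the packing count applies at radius $d_j$; this holds because $d_1+d_2\geq\delta(\hat{\omega}_N)$, and the case $j=1$ is exactly what your reduction to $\eta>N^{-1/d}$ handles.
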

\begin{proof}[\textbf{Proof.}]
The proof follows the lines of \cite{HaSaWh2012}. By Theorem \ref{separation}, there exists a $ C_1>0 $ such that
 \[ \delta(\hat{\omega}_N)\geq {C_1}/{N^{1/d}},\quad N\geq 2. \]
Considering a subsequence if necessary, one may assume $ \dist(x,\hat{\omega}_N)\geq {C_1}/{2N^{1/d}} $, since otherwise the statement of the Lemma follows immediately.  Consider $ r_0:= \epsilon C_1/N^{1/d},\ 0<\epsilon<1/2 $ and put $ B_y:= A\cap B(y,r_0),\ y\in \hat{\omega}_N(x,r) $ for every $ N\geq 2 $. The collection $ \{B_y\} $ defined in this way consists of disjoint sets. By construction,  then for any $ z\in B_y $,
 \[ |z-x|\leq |z-y|+|y-x|\leq r_0+|y-x| \leq (2\epsilon+1)|y-x|, \quad y\in \hat{\omega}(x,r) \]
 where we used that $ r_0 \leq  2\epsilon\,\dist(x,\hat{\omega}_N) \leq 2\epsilon|y-x|  $.
 As $ A $ is $ d $-regular with respect to $ \lambda $, we obtain from the last equation
 \begin{equation}\label{from_whitehouse}
  |y-x|^{-s}\leq \frac{(2\epsilon+1)^{s}}{\lambda{(B_y)}} \int_{B_y}|z-x|^{-s}\d\lambda(z)\leq\frac{(2\epsilon+1)^{s}}{c_0r_0^d} \int_{B_y}|z-x|^{-s}\d\lambda(z).
 \end{equation} 
 Also, for $ z\in B_y $:
 \[ |z-x|\geq|y-x|-|z-y|\geq (1-2\epsilon)|y-x|\geq(1-2\epsilon)\dist(x,\hat{\omega}_N)=:r_\epsilon, \]
 which implies
 \[ \bigcup_{y\in \hat{\omega}(x,r)}B_y \subset A\setminus B(x,r_\epsilon). \]
We write $ \widetilde{c_0}:=(2\epsilon+1)^s/c_0  $. Summing equations \eqref{from_whitehouse} over $y\in \hat{\omega}(x,r)$ and using \eqref{I_integral_estimate},
 \begin{align*}
U_r(x,\hat{\omega}_N)= &\sum_{y\in \hat{\omega}_N(x,r)}  |y-x|^{-s}\leq   \frac{\widetilde{c_0}}{r_0^d}\sum_{y\in \hat{\omega}_N(x,r)} \int_{B_y}|z-x|^{-s}\d\lambda(z) \leq  \frac{\widetilde{c_0}}{r_0^d} \int\limits_{A\setminus  B(x,r_\epsilon)} |z-x|^{-s}\d\lambda(z)\\
 = & \frac{\widetilde{c_0}}{r_0^d}I(x,r_\epsilon)\leq r_\epsilon^{d-s}C_0 \frac{s \widetilde{c_0}}{(s-d) r_0^d}= N[\dist(x,\hat{\omega}_N)]^{d-s} \frac{C_0s\,(1+2\epsilon)^s(1-2\epsilon)^{d-s} }{c_0C_1^d(s-d)\,\epsilon^d}.
  \end{align*}
 The RHS has the minimal value at   $\epsilon= \frac{d}{4s-2d} <1/2 $, if considered as function of $ \epsilon $. Summarizing,  we have 
 \[U_r(x,\hat{\omega}_N)\leq \hat{C}(A,\lambda,s,d) N [\dist(x,\hat{\omega}_N)]^{d-s}. \]
 Substitution of \eqref{lemma_lower_bound} gives
 \[ \dist(x,\hat{\omega}_N)\leq \left(\frac{\hat{C}(A,\lambda,s,d) N}{U_r(x,\hat{\omega}_N)}\right)^{ {1}/{(s-d)}   }\leq \left(\frac{\hat{C}(A,\lambda,s,d)N}{CN^{s/d} }\right)^{{1}/{(s-d)}} , \]
 which ends the proof.
\end{proof}
Recall that we write $  \zeta_N\sim\xi_N ,\ N\to\infty $ if  $ \zeta_N/\xi_N \to1,\ N \to \infty $.

\begin{lemma}\label{covering_lemma}
Let the assumptions of Theorem \ref{covering} hold. Then 
\begin{equation}\label{covering_radius_prereq}
U_r(x,\hat{\omega}_N)\geq C(A,	h,s,d,q )  N^{s/d}, \qquad N\geq 2.
\end{equation}
\end{lemma}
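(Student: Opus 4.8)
We may assume $x\notin\hat\omega_N$ (otherwise $\dist(x,\hat\omega_N)=0$ and the use of Lemma~\ref{covering_lemma} inside Lemma~\ref{lemma_pre-covering} is vacuous) and, since $q$ is continuous on the compact set $A$, that $q$ is bounded. The plan is to test the minimality of $\hat\omega_N$ against the $(N+1)$-point configuration $\hat\omega_N\cup\{x\}$. Writing $P(x):=\sum_{y\in\hat\omega_N}|x-y|^{-s}$ and comparing $\E(A,N+1)\le\e(\hat\omega_N\cup\{x\})$ with $\E(A,N)=\e(\hat\omega_N)$ (the common term $E^0_{s,d}(\hat\omega_N)$ cancels, and $\T(N+1)/(N+1)=(N+1)^{s/d}$), one gets
\[
2P(x)\ \ge\ \bigl[\E(A,N+1)-\E(A,N)\bigr]-\bigl[(N+1)^{s/d}-N^{s/d}\bigr]\sum_{y\in\hat\omega_N}q(y)-(N+1)^{s/d}q(x).
\]
Take $r\ge\diam A$, so that $U_r(x,\hat\omega_N)=P(x)$ (any $r>0$ is admissible in Lemma~\ref{lemma_pre-covering}); use $q(x)\le L_1-h$, the convergence $\tfrac1N\sum_{\hat\omega_N}q\to\int q\,d\mu_A^q=:Q^*$ from Theorem~\ref{thm_weak_lim} (legitimate because $q$ is continuous), and $(N+1)^{s/d}-N^{s/d}=\tfrac sd N^{s/d-1}(1+o(1))$. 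Since $(1+s/d)S(q,A)=L_1+\tfrac sd Q^*$ by \eqref{S_lim}, the lemma reduces to
\[
\liminf_{N\to\infty}\frac{\E(A,N+1)-\E(A,N)}{N^{s/d}}\ \ge\ L_1+\tfrac sd Q^*\,:
\]
then the right-hand side of the display above is $\ge hN^{s/d}-o(N^{s/d})\ge\tfrac h2N^{s/d}$ for all large $N$, while for the finitely many remaining $N$ one has $U_r(x,\hat\omega_N)=P(x)\ge N(\diam A)^{-s}$, which dominates $CN^{s/d}$ once $C$ is chosen small.

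For the energy--difference estimate I would first record the monotonicity $\E(A,\cdot)$ is non-decreasing (deleting any point from an $(N+1)$-point minimizer $\hat\omega_{N+1}$ gives an $N$-point competitor, and $N^{s/d}\le(N+1)^{s/d}$, $q\ge0$), and more precisely that for every $y\in\hat\omega_{N+1}$,
\[
\E(A,N+1)-\E(A,N)\ \ge\ 2\!\!\sum_{\substack{z\in\hat\omega_{N+1}\\ z\ne y}}\!\!|y-z|^{-s}+N^{s/d}q(y)+\bigl[(N+1)^{s/d}-N^{s/d}\bigr]\!\!\sum_{z\in\hat\omega_{N+1}}\!\!q(z).
\]
One is free to pick $y$. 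Choosing it by a pigeonhole/averaging argument, with $\sum_y\sum_{z\ne y}|y-z|^{-s}=E^0_{s,d}(\hat\omega_{N+1})$ and the asymptotics $\E(A,N+1)\sim S(q,A)N^{1+s/d}$, $\tfrac1{N+1}\sum_{\hat\omega_{N+1}}q\to Q^*$ of Theorem~\ref{thm_weak_lim}, gives a lower bound of the correct order but, when $s>d$, short of the target by $\tfrac{(s/d-1)(L_1-Q^*)}{1+s/d}N^{s/d}$. To recover the sharp constant for every $h>0$ I would instead take $y$ near the outer edge of $\supp\mu_A^q$, where $q(y)$ is close to $L_1$ (the weak$^*$ convergence of Theorem~\ref{thm_weak_lim} puts points of $\hat\omega_{N+1}$ there whenever $\{q<L_1\}$ abuts $\{q\ge L_1\}$): the single term $N^{s/d}q(y)$ then already supplies essentially $L_1N^{s/d}$, and combined with $[(N+1)^{s/d}-N^{s/d}]\sum q\approx\tfrac sd Q^*N^{s/d}$ one reaches $L_1+\tfrac sd Q^*$. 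In the residual case $\supp\mu_A^q=A$ (which forces $L_1>\max_A q$) one takes $y$ near $\arg\max_A q$, where the $\mu_A^q$-density is strictly positive, and bounds $\sum_{z\ne y}|y-z|^{-s}$ from below by the corresponding local optimal energy---of order $N^{s/d}$---via Lemma~\ref{lower-optimum}/Lemma~\ref{local-optimal}.

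The main obstacle is exactly this sharpness: the plain one-point deletion plus averaging loses a multiplicative constant once $s/d$ is large, so one must genuinely exploit that $\mu_A^q$ carries mass near the ``boundary'' of its support (or near $\arg\max_A q$) and control the potentials of the $\hat\omega_{N+1}$-points lying there, which is where the local-optimality Lemmas~\ref{lower-optimum} and~\ref{local-optimal} are needed. Everything else---the monotonicity of $\E(A,\cdot)$, the elementary splitting $U_r(x,\hat\omega_N)\ge P(x)-Nr^{-s}$ with $Nr^{-s}=o(N^{s/d})$ valid for any fixed $r$, the limit $\T(N+1)/\T(N)\to1$, and the first-order energy asymptotics together with the weak$^*$ convergence---is routine and is furnished by the results already established, chiefly Theorem~\ref{thm_weak_lim}.
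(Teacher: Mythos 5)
Your reduction is clean up to one decisive point, and that point is where the argument breaks. You reduce the lemma to the sharp single-point increment ("chemical potential") inequality $\liminf_{N\to\infty}\bigl[\E(A,N+1)-\E(A,N)\bigr]/N^{s/d}\ \ge\ L_1+\tfrac{s}{d}Q^*=(1+s/d)\,S(q,A)$, and since the theorem must hold for every $h>0$, you genuinely need this constant sharp (any fixed defect $\delta$ kills the conclusion for $h<\delta$). This inequality does not follow from the first-order asymptotics and weak$^*$ convergence of Theorem \ref{thm_weak_lim}: first-order asymptotics only control increments in an averaged (Ces\`aro) sense. Your deletion bound gives, for a chosen $y\in\hat\omega_{N+1}$, roughly $\bigl(2\c\rho(y)^{s/d}+q(y)+\tfrac{s}{d}Q^*\bigr)N^{s/d}=\bigl(\tfrac{2L_1+(s/d-1)q(y)}{1+s/d}+\tfrac{s}{d}Q^*\bigr)N^{s/d}$, which is short of the target by $\tfrac{(s/d-1)(L_1-q(y))}{1+s/d}N^{s/d}$ for $s>d$ — exactly the deficit you noticed. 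Your fix works only when $\mu_A^q\{q>L_1-\epsilon\}>0$ for all $\epsilon>0$; but in the common "residual" case $\max_A q<L_1$ (which includes $q\equiv0$ and, more generally, every case where $\supp\mu_A^q=A$, such as Example \ref{caps}) no point of $\hat\omega_{N+1}$ has $q(y)$ near $L_1$, and the fallback cannot close the gap: Lemmas \ref{lower-optimum} and \ref{local-optimal} bound \emph{sums} of pair energies over a region, so by pigeonhole they only yield some $y$ with $\sum_{z\ne y}|y-z|^{-s}\gtrsim\c\rho(y)^{s/d}N^{s/d}$ — the average value already used above — and never a single-point potential strictly above its local average, which is what you would need to recover the missing $\tfrac{(s/d-1)(L_1-\max_A q)}{1+s/d}N^{s/d}$. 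So as written the proof has a genuine gap precisely in the generic case.

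The paper avoids increments altogether. It works locally at $x$: choose $r$ so small that $q\le L_1-h/2$ and $q\ge q(x)-\Delta$ on $B(x,r)$, and for \emph{each} $y\in\hat\omega_N\cap B(x,r)$ use the swap inequality $U(x,\hat\omega_N)\ge U(y,\hat\omega_N)$ minus an $Nr^{-s}$ far-field error; summing over the $\approx N\mu_A^q[B(x,r)]$ such points (their number is controlled by weak$^*$ convergence and the Ahlfors lower bound, which gives $\mu_A^q[B(x,r)]\gtrsim r^d$), and bounding the resulting local pair energy from below by Lemma \ref{lower-optimum} with $q\equiv0$ on the single ball, one gets $U_r(x,\hat\omega_N)\gtrsim N^{s/d}$ after dividing by the point count. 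Because the comparison is against \emph{many} nearby points, the average-type lower bound of Lemma \ref{lower-optimum} suffices there, whereas in your scheme it does not; if you want to salvage an "add points and compare energies" route, you would have to insert a macroscopic number ($\sim\epsilon N$) of points near $x$ rather than one, which in effect re-creates the paper's local comparison.
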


\begin{proof}[\textbf{Proof.}]
 For a fixed $ \Delta>0 $, choose small enough $ r>0 $, so that  $z\in B(x,r) $ implies $q(z)\leq L_1 -h/2 $ and $ q(z)\geq  q(x)-\Delta $ for all $ x\in A(L_1-h) $. The choice of $ r  $ thus depends on $ q,h,A,\Delta $. Note that by \eqref{L}, $x\in \supp(\mu^q_A) $. Suppose also that $ r $ satisfies $ \H(\partial B(x,r) ) =0 $ (such values of $ r $ are dense because $ \h(A)<\infty $).
As above,  $ \hat{\omega}_N(x,r)=\{y\in\hat{\omega}_N: y\in B(x,r)\} $.  
Using equation \eqref{regular_WRT_measure} and Theorem \ref{thm_weak_lim}, we have:
\begin{equation}\label{B_r_lower}
\begin{aligned}
 \mu_A^q[B(x,r)]=  \int\limits_{B(x,r)} & \left(\frac{L_1-q(z)}{\c(1+s/d)}\right)^{d/s}   \d\H(z)\geq\\
 & \geq \left(\frac{h}{2\c(1+s/d)}\right)^{d/s}c_0 r^d =: c(h,A,s,d)r^d.
\end{aligned}
\end{equation}
For any $ N\geq 2 $, if $ x\in \hat{\omega}_N $ there is nothing to prove. Otherwise, as $\hat{\omega}_N$ is an optimal configuration, from \eqref{minimality_x^N_i} for every $y\in \hat{\omega}_N(x,r)$
\[\begin{aligned}
U_r(x,\hat{\omega}_N) + N^{s/d} { q(x)} &\boldsymbol{\geq} |x-y|^{-s}  +\sum_{\substack{z\in  \hat{\omega}_N(x,r):\\ z\neq y}} |z-y|^{-s} + N^{s/d} {q(y)}  + \\
&  \qquad  + \sum_{\substack{z\in\hat{\omega}_N:\\ z\notin  \hat{\omega}_N(x,r)}}\left(|z-y|^{-s}-|z-x|^{-s} \right) \\
& \boldsymbol{\geq} |x-y|^{-s}  +\sum_{\substack{z\in  \hat{\omega}_N(x,r):\\ z\neq y}} |z-y|^{-s} + N^{s/d}{ q(y)}-Nr^{-s}.
\end{aligned} \]
Summing over all $y\in \hat{\omega}_N(x,r)$,
\begin{equation}\label{asymptotics_covering}
\begin{aligned}
(\#\hat{\omega}_N(x,r)-1) & U_r(y,\hat{\omega}_N) \geq 
\\
\geq & \sum_{\substack{y,z\in  \hat{\omega}_N(x,r):\\ z\neq y}} |z-y|^{-s} + N^{s/d} \sum_{\substack{y \in  \hat{\omega}_N(x,r)}}  ( q(y)  - q(x) )-N^2r^{-s}\\ 
\geq  & \sum_{\substack{y,z\in  \hat{\omega}_N(x,r):\\ z\neq y}} |z-y|^{-s} - N^{s/d} {\Delta \#\hat{\omega}_N(x,r) }-N^2r^{-s}.
\end{aligned}
\end{equation}
Since $ \H(\partial B(x,r) ) =0 $, there holds  $\lim \#\hat{\omega}_N(x,r)/N = \mu^q_A [B(x,r)] $, $ N\to\infty $. Using \eqref{B_r_lower}  and the Lemma \ref{lower-optimum} for the single set $ B(x,r)  $ with $ q(\cdot) \equiv 0 $, we conclude
\[\begin{aligned}
\lim\inf_{N\to \infty } N^{-1-s/d} \sum_{\substack{y,z\in  \hat{\omega}_N(x,r):\\ z\neq y}} |z-y|^{-s} \geq  \frac{\H[B(x,r)]}{\c^{d/s}} \left(\frac{h}{  2(1+s/d) }\right)^{1+d/s} .
\end{aligned} \] 
Since  $N^2r^{-s}=o(N^{1+s/d})$, dividing \eqref{asymptotics_covering} by $ \#\hat{\omega}_N(x,r) \sim N\mu^q_A [B(x,r)]  $ for $ N\to\infty $  gives
\begin{equation}\label{r^-1}
 \begin{aligned}
 U_r & (x,\hat{\omega}_N) \geq  \\
 & \geq  N^{s/d}\left(  \frac{\H[B(x,r)]}  {\mu^q_A [B(x,r)]\c^{d/s}} \left(\frac{h}{  2(1+s/d)}\right)^{1+d/s}-\Delta- N^{1-s/d}\frac{r^{-s}}{\mu^q_A [B(x,r)]}  \right) \\
 & \geq  N^{s/d} \left(\frac{(h/2)^{1+d/s}}{(1+s/d)L_1^{d/s}}-\Delta-N^{1-s/d} \frac{r^{-s-d}}{c(h,A,s,d)} \right), \qquad N\to\infty,
 \end{aligned} 
\end{equation}
where we used 
\[ \mu_A^q[B(x,r)]\leq \left(\frac{L_1}{\c(1+s/d)}\right)^{d/s}\H[B(x,r)]. \]
If we put $ \Delta = \frac{(h/2)^{1+d/s}}{2(1+s/d)L_1^{d/s}}$, the inequality \eqref{r^-1} implies that  there exists a constant $ C = C(A,h,s,d,q ) $ for which
\begin{equation}\label{covering_constant}
U_r(x,\hat{\omega}_N)\geq C(A,h,s,d,q )  N^{s/d}, \quad N\in\N.
\end{equation}
\end{proof}
\begin{proof}[\textbf{Proof of Theorem \ref{covering}.}] Follows from Lemma \ref{lemma_pre-covering} and Lemma \ref{covering_lemma}.
\end{proof}

\bibliographystyle{abbrv}
\bibliography{../energy}
\end{document}